\newcommand*{\rom}[1]{\expandafter\@slowromancap\romannumeral #1@}
\newcommand{\N}{\mathbb{N}}
\newcommand{\R}{\mathbb{R}}
\newcommand{\Z}{\mathbb{Z}}
\newcommand{\ve}{\varepsilon}
\newcommand{\bp}{\begin{pmatrix}}
\newcommand{\ep}{\end{pmatrix}}
\newcommand{\intt}{\int_{\T^d}}
\newcommand{\p}{\partial}
\newcommand{\iinttr}{\iint_{\T^d\times\R^d}}
\newtheorem{theorem}{Theorem}[section]
\newtheorem{corollary}[theorem]{Corollary}
\newtheorem{lemma}[theorem]{Lemma}
\newtheorem{proposition}[theorem]{Proposition}
\theoremstyle{definition}
\theoremstyle{remark}
\newtheorem{remark}[theorem]{Remark}
\numberwithin{equation}{section}
\DeclareMathOperator*{\esssup}{ess\,sup}
\newcommand{\T} {\mathbb T}
\newcommand{\pa}{\partial}
\newcommand{\e}{\varepsilon}
\newcommand{\lt}{\left}
\newcommand{\rt}{\right}
\newcommand{\bq}{\begin{equation}}
\newcommand{\eq}{\end{equation}}
\newcommand{\bfk}{{\bf k}}
\newcommand{\bfv}{{\bf v}}
\newcommand{\bbI}{\mathbb I}
\newcommand{\bbN}{\mathbb N}
\newcommand{\calC}{\mathcal C}
\newcommand{\calD}{\mathcal D}
\newcommand{\calE}{\mathcal E}
\newcommand{\calF}{\mathcal F}
\newcommand{\calN}{\mathcal N}
\newcommand{\calP}{\mathcal P}
\newcommand{\weakto}{\rightharpoonup}
\newcommand{\intr}{\int_{\R^d}}
\newcommand{\dx}{\textnormal{d}x}
\newcommand{\dv}{\textnormal{d}v}
\newcommand{\ds}{\textnormal{d}s}
\newcommand{\dt}{\textnormal{d}t}
\renewcommand{\sout}[1]{}
\renewcommand{\cancel}[1]{}
\newcommand{\dmu}{\textnormal{d}\mu}
\newcommand{\ddt}{\frac{\textnormal{d}}{\textnormal{d}t}}
\newcommand{\nd}{\textnormal{d}}
\begin{document}

\title[On the derivation of ionic Euler--Poisson system from kinetic models]{Hydrodynamic limit from kinetic models with massless electrons to the ionic Euler--Poisson system}

\author{Young-Pil Choi}
\address{Department of Mathematics, Yonsei University, Seoul 03722, Republic of Korea}
\email{ypchoi@yonsei.ac.kr}

\author{Dowan Koo}
\address{Department of Mathematics, Yonsei University, Seoul 03722, Republic of Korea}
\email{dowan.koo@yonsei.ac.kr}

\author{Sihyun Song}
\address{Department of Mathematics, Yonsei University, Seoul 03722, Republic of Korea}
\email{ssong@yonsei.ac.kr}

\date{\today}

\keywords{Ionic Euler--Poisson equations, Vlasov--Poisson--Fokker--Planck equation, hydrodynamic limit, modulated energy method, weak entropy solutions.}

\begin{abstract}

We study the derivation of ion dynamics, namely, the ionic Euler--Poisson system, from kinetic descriptions. The kinetic framework consists of the ionic Vlasov--Poisson equation coupled with either a nonlinear Fokker--Planck operator or a local alignment term. In both kinetic and fluid models, the massless electrons are assumed to be in thermodynamic equilibrium, leading to an electric potential governed by the Poisson--Boltzmann equation. The exponential nonlinearity in this semilinear elliptic problem creates significant mathematical difficulties, which we overcome by exploiting the physical structure of the system, in particular, the role of the electron velocity field hidden in the limiting equation. Our first main result establishes the hydrodynamic limit from the kinetic model to the ionic Euler--Poisson system, providing quantitative error estimates via the modulated energy method. As a second contribution, we prove the global-in-time existence of weak entropy solutions to the kinetic equations, ensuring consistency with the hydrodynamic limit framework.
 \end{abstract}


\maketitle

\tableofcontents
%
%
%
%
%

\section{Introduction}
%
%
%
%
%

We are interested in deriving the following ionic Euler--Poisson equations, which arise in plasma physics when electrons are considered \emph{massless} and in thermodynamic equilibrium:
\begin{align}
\label{eq : ionic EP}
\begin{cases}
    \p_t \rho + \nabla_x \cdot (\rho u) = 0, \quad t > 0, \quad x \in \T^d,\\
    \p_t (\rho u) + \nabla_x \cdot (\rho u \otimes u + \kappa \rho \bbI) = -\rho \nabla \Phi,\\
    -\Delta \Phi = \rho - e^{\Phi}.
    \end{cases}
\end{align}
Here $\rho=\rho(t,x)$ and $u=u(t,x)$ represent the ion density and velocity at time $t$ and position $x \in \T^d$ with dimension $d \geq 1$, respectively. The constant $\kappa \geq 0$ denotes the ratio of the ion temperature to the electron temperature. When $\kappa > 0$, the term $\kappa \rho$ models the ion pressure associated with thermal effects. In contrast, the pressureless case $\kappa = 0$ corresponds to the cold ion regime, where the ion temperature is negligible compared to that of the electrons.

The semilinear elliptic equation $\eqref{eq : ionic EP}_3$, known as the Poisson--Boltzmann equation, describes the self-consistent electric potential generated by the plasma under the assumption that electrons are in thermodynamic equilibrium and effectively massless. The exponential term $e^{\Phi}$ represents the density of thermalized electrons. This additional nonlinearity complicates the analysis, and as a result, the ionic model has been studied less extensively than its classical counterpart.

To motivate the system \eqref{eq : ionic EP} and clarify the emergence of the exponential nonlinearity in the Poisson--Boltzmann equation, we briefly review its derivation from a two-fluid Euler--Poisson model for ion and electron species. A representative example can be found in \cite{grenierguopausadersuzuki2020}, where the dynamics of ions and electrons near a constant equilibrium density is considered. The system reads:
\begin{align}
\begin{cases}
    \partial_t \rho_e + \text{div}_x (\rho_e u_e) = 0,\\
    m_e(\partial_t (\rho_e u_e) + \text{div}_x (\rho_e u_e \otimes u_e)) + \nabla p_e(\rho_e) =  c_e \rho_e \nabla \Phi,\\
    \partial_t \rho_i + \text{div}_x (\rho_i u_i) = 0,\\
    m_i(\partial_t (\rho_i u_i) + \text{div}_x (\rho_i u_i \otimes u_i)) + \nabla p_i(\rho_i) = - c_e \rho_i \nabla \Phi, \\
    - \Delta \Phi = 4\pi c_e (\rho_i - \rho_e),
\end{cases}
\label{coupled ion electron}
\end{align}
where $c_e>0$ is the electric charge, $\Phi$ is the electric potential, $m_e$ and $m_i$ denote the electron and ion mass, $\rho_e$ and $\rho_i$ denote the densities, $u_e$ and $u_i$ the velocities, and $p_e(\rho_e)$, $p_i(\rho_i)$ are the respective pressure laws. 

Motivated by the fact that electrons are much lighter than ions, we consider the singular limit
\begin{align*}
    \eta = \frac{m_e}{m_i} \to 0,
\end{align*}
while assuming $m_i = O(1)$. This regime corresponds to neglecting electron inertia and yields the so-called \emph{massless electron limit}. Setting $p_e(\rho_e) = \rho_e$ and $p_i(\rho_i) = \kappa \rho_i$, and formally taking $m_e = 0$ in \eqref{coupled ion electron} leads to
\begin{align*}
    \nabla \log \rho_e = c_e \nabla \Phi,
\end{align*}
and thus, up to a constant,
\begin{align}\label{eq: rhoe ePhi}
    \rho_e = e^{c_e \Phi}.
\end{align}
Substituting this relation into the Poisson equation yields
\begin{align*}
    -\Delta \Phi = 4\pi c_e (\rho_i - e^{c_e \Phi})
\end{align*}
so that the electron density is explicitly determined by the potential. This allows the system to decouple into an effective model for ion dynamics. After appropriate nondimensionalization, the system reduces to the ionic Euler--Poisson equations \eqref{eq : ionic EP}. For a rigorous justification of this derivation, we refer again to \cite{grenierguopausadersuzuki2020}. The local well-posedness of \eqref{eq : ionic EP} is studied in \cite{lanneslinaressaut2012}, and global smooth irrotational solutions with small amplitude are constructed in \cite{GP11} in three dimensions. On the other hand, the formation of finite-time singularities for the pressureless case has been studied in \cite{BCK24, CKKTpre, Liu06}.

The goal of the present work is to rigorously derive the ionic Euler--Poisson system \eqref{eq : ionic EP} as the hydrodynamic limit of a kinetic model describing the evolution of ion particles under the influence of collisions and self-consistent electric forces. To this end, we consider a kinetic equation of Vlasov--Poisson--Fokker--Planck type that relaxes toward \eqref{eq : ionic EP}. Our main result is a quantitative convergence estimate based on the modulated energy method. As part of the analysis, we construct global-in-time weak solutions to the kinetic model satisfying a suitable kinetic entropy inequality. This entropy structure plays a central role in obtaining uniform estimates and ensuring convergence to the limiting fluid system.

%
%
%
%
%

\subsection{Kinetic models and related notions}
From a kinetic perspective, the ionic dynamics can be described by the following collisionless Vlasov--Poisson system with exponential nonlinearity:
\begin{align}
\label{iVP}
    \begin{cases}
    \p_t f + v \cdot \nabla_x f - \nabla \Phi[f]\cdot \nabla_v f = 0, \\
    -\Delta \Phi[f] = \int_{\R^d}f\,\dv - e^{\Phi[f]},
\end{cases}
\end{align}
where $f = f(t,x,v)$ denotes the ion distribution function and $\Phi[f]$ is the associated self-consistent potential determined by the Poisson--Boltzmann equation. This model captures the interaction between ions and thermalized electrons assumed to be in equilibrium.

The existence of global weak solutions was established by Bouchut \cite{bouchut1991}. More recently, well-posedness, propagation of moments, and stability in the Wasserstein distance were addressed in \cite{cesbroniacobelli2021, griffinpickeringiacobelli2021b, griffinpickeringiacobelli2021a}, with a comprehensive survey provided in \cite{griffinpickeringiacobelli2021c}. Regarding limiting procedures, Bardos et al. \cite{bardosgolsenguyensentis2018} derived the ionic Vlasov--Poisson system as a massless-electron limit from the coupled ion-electron Vlasov--Poisson equations, under the presence of electron collisions modeled via Boltzmann or BGK-type operators. The mean-field limit of the system \eqref{iVP} was rigorously justified in \cite{griffinpickeringiacobelli2020} through a deterministic approach, while a probabilistic version was later developed in \cite{griffinpickering2024}. In a different direction, the quasineutral limit was analyzed by Han-Kwan \cite{hankwan2011}, who formally derived macroscopic models such as the compressible Euler and shallow water equations. More recently, global-in-time existence of Lagrangian and renormalized solutions was established in \cite{choikoosong2025}, assuming initial data with $L^{1+}$ integrability. That work also addressed the well-posedness of the Poisson--Boltzmann equation $-\Delta \Phi = \rho - e^\Phi$ for density functions $\rho \in L^p(\mathbb{T}^d)$ with $p > 1$.

In the presence of collisional effects, several recent works have extended \eqref{iVP} by incorporating collision operators. In \cite{FG24}, the Vlasov--Poisson system with Landau-type collisions was formally derived from a kinetic ion-electron model via the massless-electron limit. In a related development, the preprint \cite{li2025globalclassicalsolutionsionic} investigates the global well-posedness of classical solutions near equilibrium for the ionic Vlasov--Poisson system with the Boltzmann collision operator.

We now turn to models involving explicit relaxation mechanisms and their hydrodynamic limits. The derivation of compressible fluid equations with isothermal pressure has been well-studied as the hydrodynamic limit of the following nonlinear Fokker--Planck equation:
\begin{equation}\label{FP}
\pa_t f + v\cdot \nabla_x f + F \cdot  \nabla_v  f = \frac{1}{\tau} \calN_\kappa [f],
\end{equation}
where $\tau > 0$ is the relaxation time, $F$ is a given external force, and $\calN_\kappa$ is a Fokker--Planck-type collision operator defined by
\begin{align}\label{CO}
    \calN_{\kappa}[f] := \kappa\Delta_v f - \nabla_v \cdot ((u_f - v) f).
\end{align}
Here, $\kappa \geq 0$ represents the diffusion strength, and the macroscopic quantities are given by
\begin{align*}
    \rho_f := \int_{\R^d} f\,\dv, \qquad 
    u_f := \begin{cases} 
   \frac{\int_{\R^d} vf\,\dv}{\rho_f} & \rho_f \ne 0,\\
    0 & \textnormal{otherwise.}
    \end{cases}
\end{align*}
The quantity $\rho_f$ corresponds to the macroscopic density, while $u_f$ is often referred to as the bulk velocity. When $\kappa = 0$, the operator \eqref{CO} reduces to a local alignment-type interaction operator, as studied in \cite{CC20, kang2018, kangvasseur2015}. We also refer to \cite{KMT14} for a rigorous derivation of the local alignment operator from an underlying kinetic flocking model.

The Fokker--Planck operator is a standard tool in plasma modeling, used to describe slow stochastic momentum exchanges among particles; see \cite{lifshitzpitaevskii1981, spohn1980} for its physical derivation. The hydrodynamic limit of \eqref{FP} as $\tau \to 0$ was first established rigorously by Berthelin and Vasseur \cite{berthelinvasseur2005} using the relative entropy method, assuming a sufficiently regular external force. Related advances include the analysis of coupled Navier--Stokes--Fokker--Planck systems in \cite{CJ20, CJ23, MelletVasseur2008}, as well as the incompressible Euler and Navier--Stokes limits from nonlinear Vlasov--Fokker--Planck equations studied in \cite{CJ24, CJpre}. We also refer to \cite{carrillochoijung2021, choi2021, FK19, kang2018, kangvasseur2015, KMT2015} and the references therein for further developments on asymptotic analysis and relaxation limits in kinetic models.

We briefly recall properties of the collision operator $\calN_\kappa$ defined in \eqref{CO}. First, $\calN_\kappa$ conserves local mass and momentum locally:
\begin{equation}\label{cmm}
\intr (1,v) \calN_\kappa[f]\,\dv =0.
\end{equation}
This operator admits a family of local equilibria, denoted by $M_\kappa[f] := M_\kappa^{(\rho_f, u_f)}$, which are commonly referred to as the local Maxwellians (for $\kappa > 0$) or the monokinetic ansatz (for $\kappa = 0$). They are explicitly given by
\begin{equation}
\label{def : local maxwellian euler poisson}
\begin{split}
    M_\kappa^{(\rho, u)} (v) := \begin{cases}
        \frac{\rho}{(2\pi \kappa)^{d/2}}\exp\left(-\frac{|v-u|^2}{2\kappa} \right) &\kappa > 0,\\
        \rho \delta(v - u) &\kappa = 0,
    \end{cases}
    \end{split}
\end{equation}
for any $\rho \in [0,\infty)$ and $u \in \R^d$, where $\delta$ denotes the Dirac mass centered at zero.

In the case $\kappa>0$, the operator $\calN_\kappa$ can be written as
\begin{align*}
    \calN_{\kappa}[f] = \nabla_v\cdot \left( \kappa f \nabla_v \log \left(\frac{f}{M_{\kappa}[f]}\right) \right)
\end{align*}
which clearly indicates that $f = M_\kappa[f]$ is an equilibrium. Thus, in the singular limit $\tau \to 0$ of \eqref{FP}, one formally expects
\bq\label{near}
f \approx M_{\kappa}[f],
\eq
that is, $f$ remains close to the local equilibrium at each time.

The local equilibria $M_\kappa[f]$ enjoy several important properties that are crucial in deriving fluid-type limits. First, it satisfies $\calN_\kappa (M_\kappa[f])=0$ and preserves the macroscopic quantities of $f$:
\[
\intr (1,v) M_\kappa [f] \,\dv = (\rho_f, \rho_f u_f).
\]
Moreover, for $\kappa > 0$, it satisfies the isothermal pressure identity:
\begin{equation}\label{P:law}
\intr (v-u_f) \otimes (v-u_f) M_\kappa[f]\,\dv = \kappa \rho_f \bbI,
\end{equation}
which plays a key role in recovering the pressure law in the fluid equations. In addition, $M_\kappa[f]$ satisfies a \textit{compatibility condition} and \textit{minimization principle} regarding the following kinetic free energy functional:
\begin{align}\label{eq: min prin}
    \frac{1}{2}\rho_f |u_f|^2 + \kappa \rho_f \log \rho_f  = \int_{\R^d} H_\kappa [M_{\kappa}[f]](v)\,\dv \le \int_{\R^d} H_\kappa [f](v)\,\dv ,
\end{align}
where the entropy density $H_\kappa$ is defined by
\begin{equation}\label{H}
    H_\kappa[f](v) :=   \frac{|v|^2}{2}f + \kappa \lt(f \log f + \frac{d}{2}\log(2\pi \kappa)f\rt).
\end{equation}

We adopt the convention that the term in parentheses vanishes when $\kappa = 0$, in which case the entropy becomes purely kinetic. The presence of the linear term in $f$ ensures that equality in \eqref{eq: min prin} holds precisely when $f = M_\kappa[f]$. The left-hand side of \eqref{eq: min prin} defines the macroscopic entropy
\begin{align}\label{def: eta}
\eta_\kappa(\rho, \rho u):=  \frac{1}{2}\rho|u|^2 + \kappa \rho \log \rho,
\end{align}
which corresponds to the entropy of the limiting isothermal Euler system (or its pressureless variant when $\kappa = 0$). These structural properties of $\calN_\kappa$ and $M_\kappa[f]$ are fundamental in deriving compressible fluid models from the kinetic description as $\tau \to 0$ in \eqref{FP}.

In order to derive the ionic Euler--Poisson system \eqref{eq : ionic EP} from a kinetic formulation, we consider a system that couples the collisionless Vlasov equation with a relaxation mechanism toward local equilibrium. This leads us to the following ionic Vlasov--Poisson--Fokker--Planck  system (VPFP):
\begin{align}
\label{main eq}
    \begin{cases}
    \p_t f + v \cdot \nabla_x f  -\nabla \Phi[f] \cdot \nabla_v f = \frac{1}{\tau}\lt( \kappa\Delta_v f - \nabla_v \cdot ((u_f - v) f)\rt), \\
    -\Delta \Phi[f] = \int_{\R^d}f\,\dv - e^{\Phi[f]},
\end{cases}
\end{align}
with initial datum $f(0,\cdot,\cdot) =: f_0$. Here, $f=f(t,x,v)$ denotes the probability density of ions at time $t$ in the phase space $\T^d\times\R^d$. The associated electric field is given by
\[
E[f] := - \nabla \Phi[f],
\]
and the term $e^{\Phi[f]}$ corresponds to the thermalized electron density under the massless-electron approximation.

Integrating both sides of \eqref{main eq} against $(1, v)$ and using the conservation property \eqref{cmm}, we obtain the following local balance laws:
\begin{equation}
\label{eq : local balance laws}
\begin{split}
    &\p_t \rho_{f} + \nabla_x\cdot (\rho_{f} u_{f}) = 0,\\
    &\p_t (\rho_{f}u_{f}) + \nabla_x \cdot \left(\rho_{f} u_{f}\otimes u_{f} + \int_{\R^d} (u_{f} - v) \otimes (u_{f} - v) f \dv \right) = -\rho_{f} \nabla \Phi[f].
    \end{split}
\end{equation}
In light of the formal relaxation \eqref{near}, one expects $f \approx M_\kappa^{(\rho, u)}$ as $\tau \to 0$. Plugging this approximation into the momentum equation and using the identity \eqref{P:law}, we formally recover the ionic Euler--Poisson system \eqref{eq : ionic EP}.

We now introduce the notion of the \emph{kinetic entropy inequality}, which serves as a key a priori estimate in the analysis of the relaxation limit. A formal computation shows that any sufficiently regular solution $f$ to \eqref{main eq} satisfies the inequality 
\begin{align}
    \calE_\kappa[f(t)] + \frac{1}{\tau} \int_0^t \iint_{\T^d\times\R^d} \calD_\kappa[f](s) \,\dx\dv\ds \le \calE_\kappa[f_0],
    \label{eq : KEI}
\end{align}
where the energy functional is defined as
\begin{align}
\label{def : energy.}
    &\calE_\kappa[f] := 
    \iint_{\T^d\times\R^d} H_\kappa[f](v) \, \dx\dv + \int_{\T^d} \frac{|\nabla\Phi[f]|^2}{2} + (\Phi[f] - 1)e^{\Phi[f]} + 1\, \dx.
\end{align}
Here, the entropy density $H_\kappa[f]$ is as defined in \eqref{H}, and the dissipation term in \eqref{eq : KEI} is given by
\begin{align*}
    \calD_\kappa[f] := \frac{1}{f}|\kappa\nabla_v f - (u_f - v)f|^2.
\end{align*}
The inequality \eqref{eq : KEI} implies, in particular, that the total energy $\calE_\kappa[f(t)]$ is non-increasing in time.

In the case $\kappa > 0$, although the Boltzmann entropy term $f \log f$ in \eqref{H} does not have a definite sign, it can be controlled by the second moment of $f$. In particular, as shown in Lemma \ref{lem:flogf:ctrl} below, the following inequality holds:
\[
\intr \frac{1}{2}|v|^2f + \kappa f\log f \,\dv \ge \intr \frac{1}{4}|v|^2f \,\dv - C_{\kappa,d}
\]
for some $C_{\kappa,d}\ge 0$ depending only on $\kappa$ and $d$. Combining this with the definition of $H_\kappa[f]$, we obtain the lower bound
\[
\iinttr H_{\kappa}[f]\,\dx\dv \ge \iinttr \frac{1}{4}|v|^2f\,\dx\dv - C_{\kappa,d} + \kappa \log (2\pi\kappa)\|f\|_{L^1(\T^d\times \R^d)}.
\]
Since the additional potential energy terms in \eqref{def : energy.}, namely $|\nabla \Phi|^2$ and $e^\Phi(\Phi - 1) + 1$, are nonnegative, we may combine this estimate with \eqref{eq : KEI} and the conservation of mass $\|f(t)\|_{L^1} = \|f_0\|_{L^1}$ to deduce the following uniform-in-time moment bound:
\begin{align}\label{eq: moments tau}
    \iinttr |v|^2f(t)\,\dx\dv \le 4(\calE_\kappa[f_0] +C_{\kappa,d}'(1+\|f_0\|_{L^1(\T^d\times \R^d)}))
\end{align}
for some constant $C_{\kappa,d}' \ge 0$. Moreover, from \eqref{eq : KEI} we obtain the dissipation estimate
\begin{align}\label{small:diss}
     \int_0^t \iint_{\T^d\times\R^d} \calD_\kappa[f](s) \,\dx\dv\ds \le \tau( \calE_{\kappa} [f_0] + C_{\kappa,d}'(1+\|f_0\|_{L^1(\T^d\times \R^d)})).
\end{align}
We remark that in the case $\kappa = 0$, both estimates \eqref{eq: moments tau} and \eqref{small:diss} hold with $C_{\kappa,d}' = 0$.

In what follows, for simplicity of notation, we will often omit the subscript $\kappa$ from $\calE_\kappa$, $\calD_\kappa$, and $H_\kappa$ when there is no ambiguity.

%
%
%
%
%

\subsection{Main result I: hydrodynamic limit}
We first establish that the ionic Euler--Poisson system \eqref{eq : ionic EP} arises as the hydrodynamic limit of the kinetic VPFP system \eqref{main eq} in the relaxation regime $\tau \to 0$. Given a weak solution $f^\tau$ to \eqref{main eq}, we define the associated macroscopic quantities by
\[
U^\tau := (\rho^\tau, \rho^\tau u^\tau) := \left(\intr f^\tau \,\dv, \intr vf^\tau\,\dv\right), \quad \Phi^\tau:= \Phi[f^\tau],
\]
and collect them into the augmented state vector
\[
\tilde U^\tau := \lt(\rho^\tau, \rho^\tau u^\tau, -\nabla \Phi^\tau, e^{\Phi^\tau} \rt).
\]
Similarly, given a smooth solution $(\rho, u, \Phi)$ to \eqref{eq : ionic EP}, we define
\begin{align*}
    U := (\rho, \rho u), \quad \tilde U := \left(\rho, \rho u, -\nabla \Phi, e^{\Phi} \right).
\end{align*}
The comparison between $f^\tau$ and the expected macroscopic limit $(\rho, u, \Phi)$ is carried out through the following \emph{modulated energy functional}:
\[
\calF(\tilde U^\tau | \tilde U) = \intt \frac{{\rho}^\tau}{2}| u^\tau - u|^2 + \kappa P( \rho^\tau |\rho) + \frac{1}{2}|\nabla {\Phi}^\tau - \nabla \Phi|^2 + e^{{\Phi}^\tau}({\Phi}^\tau - \Phi) - (e^{{\Phi}^\tau} - e^{\Phi})\,\dx,
\]
where $P(p|q) := p \log(p/q) - p + q$ denotes the standard relative entropy for scalar densities.

We now state our first main result.

\begin{theorem}\label{thm:hdr}
Let $d \geq 1$, $T>0$, and  let $(\rho, u,\Phi)$ be the unique classical solution to \eqref{eq : ionic EP} satisfying 
\bq\label{eq:iEP:lss}
(\rho, u,\Phi)\in C([0,T];H^{s}(\T^d)\times H^{s}(\T^d) \times H^{s+1}(\T^d)),\quad \inf_{(t,x)\in [0,T]\times \T^d}\rho(t,x) >0,
\eq
for some $s > \frac d2 + 1$. Let $\{f^\tau\}_{\tau>0}$ be a family of weak solutions to \eqref{main eq} satisfying the entropy inequality \eqref{eq : KEI} and the integrability condition
 \bq\label{sr:h}
\rho^\tau \nabla \Phi^\tau \in L^1 ((0,T) \times \T^d).
 \eq
Suppose that the initial data are well-prepared in the sense that
\begin{align*}
&\textnormal{\bf(H1)}\quad \calF(\tilde U^\tau_0|\tilde U_0)  \le C\sqrt{\tau},\\
&\textnormal{\bf(H2)}\quad\iint_{\T^d\times\R^d} H[f^\tau_0](v)\,\dx\dv \le  \int_{\T^d} \eta(U^\tau_0)\,\dx + C\sqrt{\tau}.
\end{align*}
Then for all $\kappa \ge 0$, the modulated energy satisfies the quantitative bound
\begin{align}\label{eq: conv mod egy}
    \calF( \tilde U^\tau| \tilde U) \le C\sqrt{\tau}.
\end{align}
As a consequence, we obtain the following convergence results:
\begin{enumerate}[(i)]
\item \textbf{Convergence of the electric field and electron density}:
\begin{align}
    \|\nabla(\Phi[f^\tau] - \Phi)\|_{L^\infty(0,T;L^2)}^2 \le C\sqrt{\tau}, \label{eq: tauconv l2} \\
    \|e^{\Phi[f^\tau]} - e^{\Phi}\|_{L^\infty(0,T;L^1)}^2 \le C\sqrt{\tau}. \label{eq: tauconv exp}
\end{align}
\item \textbf{Convergence in the pressured case ($\kappa > 0$)}: 
\begin{equation} \label{eq: convergences, kappa>0}
\begin{split}
&\|\rho^\tau - \rho\|_{L^\infty(0,T;L^1)}^2 \le C\sqrt{\tau}, \\
&\|\rho^\tau u^\tau - \rho u\|_{L^\infty(0,T;L^1)}^2 \le C\sqrt{\tau}, \\
&\|\rho^\tau u^\tau\otimes u^\tau - \rho u\otimes u\|_{L^\infty(0,T;L^1)}^2 \le C\sqrt{\tau}, \\
& \lt\|\int_{\R^d} (v-u^\tau)\otimes (v-u^\tau) f^\tau \dv -\kappa \rho \mathbb{I} \rt\|_{L^1(0,T;L^1)}^2 \le C\sqrt{\tau}.
\end{split}
\end{equation}
Moreover, the kinetic distribution $f^\tau$ converges to the local Maxwellian $M_\kappa^{(\rho, u)}$ defined in \eqref{def : local maxwellian euler poisson} in the sense that
\begin{align} 
\left\|f^\tau - M_\kappa^{(\rho,u)}\right\|_{L^1((0,T)\times\T^d\times\R^d)}^2 \le C\sqrt{\tau}.  \label{eq: conv of dist g1 k>0}
\end{align}

\item \textbf{Convergence in the pressureless case ($\kappa = 0$)}: Assume additionally that the initial mass distribution converges in the sense that
\begin{align*}
    &\textnormal{\textbf{(H3)}} \quad d_{\rm BL}^2(\rho^\tau_0,\rho_0) \le C\sqrt{\tau},
\end{align*}
where $d_{\rm BL}$ denotes the bounded Lipschitz distance. Then the following convergence estimates hold:
\begin{equation}\label{eq: convergences kappa=0}
\begin{split}
        &d_{\rm BL}^2(\rho^\tau,\rho) \le C\sqrt{\tau}, \\
        &d_{\rm BL}^2(\rho^\tau u^\tau, \rho u) \le C\sqrt{\tau},\\
        &d_{\rm BL}^2(\rho^\tau u^\tau\otimes u^\tau ,  \rho u \otimes u) \le C\sqrt{\tau}.
    \end{split}
\end{equation}
Moreover, the kinetic distribution $f^\tau$ converges to the monokinetic ansatz $\rho \delta(v-u)$ in the sense that
\begin{equation}\label{eq: conv of dist g1 k=0}
\int_0^T d_{\rm BL}^2\Big( f^\tau(t,\cdot,\cdot) \;, \; \rho(t,\cdot)  \delta(\cdot - u(t,\cdot)) \Big) \,\dt \le C\sqrt{\tau}.
\end{equation}
\end{enumerate}
\end{theorem}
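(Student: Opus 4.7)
The plan is to run the modulated energy method and establish a Gr\"onwall-type bound
\[
\calF(\tilde U^\tau|\tilde U)(t) \;\le\; \calF(\tilde U^\tau|\tilde U)(0) \;+\; C\int_0^t \calF(\tilde U^\tau|\tilde U)(s)\,\ds \;+\; C\sqrt{\tau}
\]
on $[0,T]$, with $C$ depending only on the $L^\infty_{t,x}$-norms of $(\rho,u,\Phi)$ and their first derivatives (finite via Sobolev embedding thanks to \eqref{eq:iEP:lss}, since $s>d/2+1$) and on $\inf\rho>0$. Combined with hypothesis (H1), Gr\"onwall's inequality then yields \eqref{eq: conv mod egy}, from which all the convergences (i)--(iii) are extracted.

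To set up the inequality I would differentiate the pieces of $\calF$ in time and recombine. For the mechanical parts $\tfrac12\rho^\tau|u^\tau-u|^2$ and $\kappa P(\rho^\tau|\rho)$, one uses the entropy inequality \eqref{eq : KEI} and the minimization principle \eqref{eq: min prin} to dominate the macroscopic entropy $\int \eta(U^\tau)\,\dx$ by $\iinttr H_\kappa[f^\tau]\,\dx\,\dv$; combined with the local balance laws \eqref{eq : local balance laws} and the target equations \eqref{eq : ionic EP}, the resulting time derivative splits into (a) terms quadratic in the fluid error $(\rho^\tau-\rho,\,u^\tau-u,\,\nabla\Phi^\tau-\nabla\Phi,\,\Phi^\tau-\Phi)$ that are absorbable into $C\calF$, (b) cross-terms involving the electric forcing which will be canceled by the electric part of $\calF$ in the next step, and (c) a pressure-defect term
\[
\int_0^t\!\!\intt \Big[\intr (u^\tau-v)\otimes(u^\tau-v)f^\tau\,\dv - \kappa \rho^\tau \bbI\Big] : \nabla u\,\dx\,\ds.
\]
Using \eqref{P:law} together with an integration by parts in $v$, this pressure-defect rewrites as $\int\!\!\int (u^\tau-v)\otimes[\kappa\nabla_v f^\tau - (u_{f^\tau}-v)f^\tau]\,\dv$ tested against $\nabla u$; Cauchy--Schwarz against the dissipation bound \eqref{small:diss} then produces an $O(\sqrt{\tau})$ remainder.

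The main obstacle is treating the electric part $\tfrac12|\nabla\Phi^\tau-\nabla\Phi|^2 + e^{\Phi^\tau}(\Phi^\tau-\Phi) -(e^{\Phi^\tau}-e^{\Phi})$, the latter being precisely the Bregman divergence, or relative entropy, between $e^{\Phi^\tau}$ and $e^{\Phi}$. The crucial observation -- the ``hidden electron velocity'' alluded to in the abstract -- is that differentiating the Poisson--Boltzmann equation $-\Delta\Phi=\rho-e^{\Phi}$ in time and using $\pa_t\rho+\nabla\cdot(\rho u)=0$ yields the conservation law
\[
\pa_t e^{\Phi} + \nabla\cdot(\rho u-\nabla\pa_t\Phi) = 0,
\]
which exhibits an effective electron current $e^{\Phi} u_e := \rho u - \nabla\pa_t\Phi$; the analogue holds formally for $e^{\Phi^\tau}$ by testing \eqref{main eq} against $(1,v)$, using \eqref{eq : local balance laws} and the integrability assumption \eqref{sr:h}. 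Using these continuity equations to differentiate the electric part of $\calF$ generates cross-terms of the form $\int \nabla(\Phi^\tau-\Phi)\cdot(\rho^\tau u^\tau-\rho u)\,\dx$ that cancel exactly the leftover cross-terms from the mechanical step, while the residual Bregman-type contribution is dominated by $C\calF$ using convexity of $e^{(\cdot)}$ and Poincar\'e's inequality on $\T^d$.

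Once \eqref{eq: conv mod egy} is in hand, \eqref{eq: tauconv l2} is read off directly from the $|\nabla\Phi^\tau-\nabla\Phi|^2$ piece of $\calF$, and \eqref{eq: tauconv exp} follows from the Bregman-divergence piece via a Csisz\'ar--Kullback-type inequality (exploiting the conservation $\|e^{\Phi^\tau}\|_{L^1}=\|\rho^\tau\|_{L^1}$ and $\|e^\Phi\|_{L^1}=\|\rho\|_{L^1}$ obtained by integrating Poisson--Boltzmann over $\T^d$). For $\kappa>0$, the usual Csisz\'ar--Kullback inequality applied to $P(\rho^\tau|\rho)$ gives $\|\rho^\tau-\rho\|_{L^1}^2 \le C\calF$; the remaining estimates in \eqref{eq: convergences, kappa>0} then follow via the splitting $\rho^\tau u^\tau-\rho u = \rho^\tau(u^\tau-u)+(\rho^\tau-\rho)u$ combined with the $L^\infty$-bound on $u$, and \eqref{eq: conv of dist g1 k>0} follows from a phase-space Csisz\'ar--Kullback inequality applied to the relative entropy $\iinttr H_\kappa[f^\tau\,|\,M_\kappa^{(\rho,u)}]\,\dx\,\dv$, itself dominated by $\calF$ plus the (H2)-remainder via \eqref{eq: min prin}. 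For $\kappa=0$, $\calF$ no longer controls $\rho^\tau-\rho$ in $L^1$, so I would pass to the $d_{\rm BL}$ distance by duality with $W^{1,\infty}$-test functions propagated along the continuity equations for $\rho^\tau$ and $\rho$, using hypothesis (H3) to initialize; the monokinetic bound \eqref{eq: conv of dist g1 k=0} then follows from the dissipation estimate $\iinttr|v-u^\tau|^2 f^\tau\,\dv\,\dx\le C\tau$ combined with the density and momentum controls already obtained.
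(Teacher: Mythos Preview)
Your overall architecture is right and matches the paper: a modulated energy identity, the hidden electron velocity $u_e$ satisfying $\partial_t e^\Phi + \nabla\cdot(e^\Phi u_e)=0$, cancellation of cross-terms, the dissipation estimate \eqref{small:diss} to handle the pressure defect, and a Gr\"onwall closure. Your explicit formula $e^\Phi u_e = \rho u - \nabla\partial_t\Phi$ is a legitimate choice (the paper instead constructs $u_e$ via a Bogovski\u{\i}-type argument, but this is a matter of taste). The extraction of the convergences (i)--(iii) is also essentially correct.

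The gap is in the step you describe as ``the residual Bregman-type contribution is dominated by $C\calF$ using convexity of $e^{(\cdot)}$ and Poincar\'e's inequality.'' After the cancellations, the residual that actually survives is
\[
\int_0^t\!\!\intt (\rho_e^\tau - \rho_e)(u - u_e)\cdot \nabla(\Phi^\tau -\Phi)\,\dx\,\ds,
\qquad \rho_e^\tau := e^{\Phi^\tau},\ \rho_e := e^{\Phi},
\]
and neither convexity of the exponential nor Poincar\'e closes this. The obstruction is that you only control $\rho_e^\tau$ in $L^1$ (via neutrality) and $P(\rho_e^\tau|\rho_e)$ through $\calF$, but the naive Cauchy--Schwarz would require $\|\rho_e^\tau-\rho_e\|_{L^2}$, which can be arbitrarily large on regions where $\Phi^\tau$ is large. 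Csisz\'ar--Kullback only gives $L^1$, and Poincar\'e on $\Phi^\tau-\Phi$ does not help because the bad factor is $\rho_e^\tau-\rho_e$, not $\Phi^\tau-\Phi$.

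The paper resolves this with a genuinely new ingredient: the pointwise identity
\[
(\rho_e^\tau - \rho_e)\,\nabla(\Phi^\tau - \Phi)
= \rho_e\Big(\tfrac{\rho_e^\tau}{\rho_e}-1\Big)\nabla\log\tfrac{\rho_e^\tau}{\rho_e}
= \rho_e\,\nabla\Big(\tfrac{\rho_e^\tau}{\rho_e} - 1 - \log\tfrac{\rho_e^\tau}{\rho_e}\Big),
\]
which lets you integrate by parts onto the smooth factor $\nabla\!\cdot(\rho_e(u-u_e))$; on the region $\{\rho_e^\tau\ge\rho_e\}$ the resulting integrand $\tfrac{\rho_e^\tau}{\rho_e}-1-\log\tfrac{\rho_e^\tau}{\rho_e}$ is dominated by $P(\rho_e^\tau|\rho_e)/\rho_e$, and on $\{\rho_e^\tau\le\rho_e\}$ one has $|\rho_e^\tau-\rho_e|^2 \lesssim \rho_e P(\rho_e^\tau|\rho_e)$, so a direct Cauchy--Schwarz suffices there. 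A smooth cutoff in $\rho_e^\tau/\rho_e$ is needed to make the integration by parts legitimate across the two regions. This is the content of the paper's Lemma~\ref{lem:key}, and it is the step your proposal is missing.
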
 

\begin{remark}
Notice that \eqref{eq : KEI} and \eqref{sr:h} ensure that the weak formulations of the local balance laws \eqref{eq : local balance laws} are well-defined. This condition is used to validate the modulated energy estimates in Lemma \ref{lem:fml}. We refer to Remark \ref{rmk:justify} for further comments. Also, in Remark \ref{rem: dim} below, we verify that the weak solutions constructed in Theorem \ref{thm:gws} satisfy these assumptions at least in low spatial dimensions $d = 2,3$.
\end{remark}

%
%
%
%
%

\subsubsection*{Proof strategy for Theorem \ref{thm:hdr}} \label{subsubsec: strat1}

We now outline the strategy for proving Theorem \ref{thm:hdr}.

\medskip

$\bullet$ {\bf Set-up for the modulated energy.}  Our approach relies on the modulated energy (or relative entropy) method, following the seminal framework developed by Dafermos \cite{dafermos1979, dafermos2000}. To this end, we first motivate our choice of modulated energy based on physical considerations.

We begin by noting that the following energy functional,
\begin{align}\label{eq:F}
    \int_{\T^d} \frac{|m|^2}{2\rho} + \kappa P(\rho) + \frac{1}{2}|\nabla \Phi|^2 + e^{\Phi}(\Phi - 1) + 1\, \dx, \quad m:=\rho u,\quad  P(\rho):= \rho \log \rho, 
\end{align}
is formally conserved for smooth solutions to \eqref{eq : ionic EP}. The first three terms in \eqref{eq:F} are convex in $(\rho, m, \nabla \Phi)$, but the term $e^{\Phi}(\Phi - 1) + 1$ is not convex in $\Phi$; in fact, the mapping
\begin{equation}\label{eq: nonconvex}
\Phi \mapsto e^\Phi(\Phi-1)+1
\end{equation}
is concave for $\Phi < -1$. This lack of convexity poses a significant obstacle in applying the classical modulated energy method, since it prevents interpreting the energy functional as a mathematical distance between two states of the system. Such an interpretation is essential for deriving robust quantitative stability or convergence estimates in singular limit problems.

A key observation is that the term $e^{\Phi}$ corresponds to the \emph{density of massless electrons}, as given by \eqref{eq: rhoe ePhi}. Introducing the notation
\[
\rho_e:=e^\Phi,
\]
allows us to rewrite \eqref{eq: nonconvex} as
\[
\rho_e \mapsto \rho_e(\log\rho_e-1)+1 = e^\Phi(\Phi-1)+1,
\]
which is convex in $\rho_e$. This suggests enlarging the state space by considering the extended variable
\[
\tilde{U}:= \begin{pmatrix}
\rho ,
m,
\nabla \Phi, 
\rho_e 
\end{pmatrix}^T,
\]
so that the energy functional
\[
\tilde{U} \mapsto F(\tilde{U}):= \frac{|m|^2}{2\rho} + \kappa P(\rho) + \frac{1}{2} |\nabla \Phi|^2 + \rho_e(\log\rho_e-1)+1
\]
is convex in $\tilde{U}$. Based on this convexity, we define the modulated energy via the first-order Taylor expansion of $F$ around the target state:
\begin{equation}
\label{eq: first-order expansion of eta}
\begin{split}
F(\tilde U^\tau | \tilde U)&:=F(\tilde{U}^\tau) - F(\tilde{U}) - \textnormal{d}F(\tilde{U}) \cdot (\tilde{U}^\tau-\tilde{U}) \\
&=\frac{\rho^\tau}{2}|u^\tau-u|^2 + \kappa P(\rho^\tau|\rho) + \frac{1}{2}|\nabla\Phi^\tau - \nabla \Phi|^2 + P(\rho_e^\tau|\rho_e),
\end{split}
\end{equation}
where $\textnormal{d}F$ denotes the derivative of $F$ with respect to $\tilde U$, and $P(\cdot | \cdot)$ stands for the modulated pressure defined as
\[
P({\rho}^\tau |\rho) := {\rho}^\tau \log({\rho}^\tau/\rho) - ({\rho}^\tau - \rho) .
\]
In particular, we obtain
\begin{align*}
P(\rho_e^\tau|\rho_e) = \rho_e^\tau \log(\rho_e^\tau/\rho_e) - (\rho_e^\tau - \rho_e) = e^{\Phi^\tau}(\Phi^\tau - \Phi) -(e^{\Phi^\tau}-e^\Phi).
\end{align*}
The first two terms in \eqref{eq: first-order expansion of eta} correspond to the classical relative entropy $\eta(U^\tau | U)$ (see e.g., \cite{berthelinvasseur2005}). The remaining terms capture the contribution from the electric potential and electron density. Notably, the expression $e^{\Phi^\tau}(\Phi^\tau - \Phi) - (e^{\Phi^\tau} - e^\Phi)$ has appeared in previous studies, such as the work of Han-Kwan \cite{hankwan2011} in the context of quasineutral limits. This reformulation in terms of $\rho_e$ is natural, given that the ionic Euler--Poisson equations assume an isothermal pressure law for electrons. The rest of the proof focuses on the time evolution of the modulated energy.

\medskip
$\bullet$ {\bf Velocity field for thermalized electrons.} To effectively handle the contribution of the electric potential in the time derivative of the modulated energy, we utilize the physical interpretation $\rho_e = e^{\Phi}$ and revisit the structure of the original ion-electron model \eqref{coupled ion electron}, from which the ionic Euler--Poisson system is formally derived. 

In the coupled model, electrons satisfy a mass conservation law, which suggests that even in the massless regime, it is meaningful to associate a velocity field with the electron density. We therefore introduce an auxiliary velocity field $u_e \in L^\infty(0,T;C^1(\T^d))$ satisfying
\begin{equation}
\label{eq:u_e}
\pa_t \rho_e + \nabla_x\cdot(\rho_e u_e) = 0, \quad \rho_e:=e^\Phi.
\end{equation}
This choice allows us to make use of the hidden conservation structure present in \eqref{coupled ion electron}. The existence and properties of such a velocity field are rigorously justified in Proposition \ref{lem:u_e}, and the resulting formulation yields a modulated energy identity consistent with all physically relevant macroscopic quantities, as seen in equation \eqref{eq:fml}.

\medskip
$\bullet$ {\bf Control of critical nonlinear term.} The introduction of $u_e$ facilitates the reformulation of nonlinear terms and reveals the structural parallel between the kinetic and hydrodynamic models. Nonetheless, the most delicate part in the analysis is still the control of error terms involving $\rho_e^\tau = e^{\Phi^\tau}$, particularly due to the lack of uniform bounds on $\rho_e^\tau$ as $\tau \to 0$. Among these, a major difficulty lies in estimating the term
\[
\intt (\rho_e^\tau - \rho_e)(u - u_e)\cdot \nabla(\Phi^\tau-\Phi) \, \dx, 
\]
which cannot be handled directly using standard relative entropy methods. The difficulty is particularly significant when $\rho_e^\tau$ becomes large, since in that regime the exponential nonlinearity amplifies the error and prevents us from directly closing the energy estimate.

To overcome this, we exploit the identity $\Phi^\tau = \log \rho_e^\tau$ and reformulate the integrand in terms of the relative entropy $P(\rho_e^\tau|\rho_e)$ and its derivative. The key idea is to regard the difference $u - u_e$ as a regular and bounded velocity field, and transfer derivatives onto this ``good'' term through integration by parts. This yields an expression that can be controlled by relative entropy and energy dissipation terms.

However, since the behavior of the ratio $\rho_e^\tau/\rho_e$ differs in the regions where it is greater or less than one, we introduce a smooth cutoff to separate the domain accordingly. This allows us to handle the contribution from both regimes in a unified framework, while ensuring that the integration by parts remains valid. The resulting estimate, proven in Lemma \ref{lem:key}, shows that the problematic term is indeed bounded by a linear combination of $P(\rho_e^\tau|\rho_e)$ and $|\nabla(\Phi^\tau - \Phi)|^2$.

This careful decomposition and reformulation are essential in deriving a Grönwall-type inequality, which ultimately yields the convergence estimate
\[
\sup_{t\in [0,T]} \calF(\tilde U^\tau| \tilde U) \lesssim \sqrt \tau.
\]



\subsection{Main result II: global existence of kinetic models} 

Theorem \ref{thm:hdr} establishes a rigorous hydrodynamic limit from the VPFP system \eqref{main eq} to the ionic Euler--Poisson system \eqref{eq : ionic EP}, under the assumption that both systems admit sufficiently regular solutions. In particular, we require the existence of a regular solution to the limiting system \eqref{eq : ionic EP} and a weak solution to the VPFP system \eqref{main eq} at least locally in time. Since the local well-posedness theory for \eqref{eq : ionic EP} is well understood \cite{lanneslinaressaut2012} (see Lemma \ref{prop:lwp}), our focus in this subsection is to complete the justification of Theorem \ref{thm:hdr} by establishing the global-in-time existence of weak entropy solutions to \eqref{main eq}.
 
We now state our second main theorem:
\begin{theorem} \label{thm:gws}
 Let $d\ge 2$ and $\kappa\ge 0$. Suppose the initial data $f_0$ satisfies
    \begin{align*}
        0 \le f_0 \in L^\infty(\T^d\times\R^d), \qquad \iinttr  (1+|v|^2) f_0 \,\dx\dv < + \infty.
    \end{align*}
Then there exists a global weak solution $f \ge 0$ to \eqref{main eq} such that:
    \begin{align*}
    \begin{cases}
        (1+|v|^2 + \kappa |\log f|)f \in L^\infty([0,\infty);L^1(\T^d\times\R^d)),\\
        f\in L^\infty_{\rm loc}([0,\infty)\times\T^d; L^\infty(\R^d)), \\
        \kappa \nabla_v f \in L^2_{\rm loc}([0,\infty)\times\T^d; L^2(\R^d)), 
        \end{cases} 
    \end{align*}
and the equation \eqref{main eq} is satisfied in the sense of distributions:
    \begin{equation}\label{eq:weakform}
    \begin{split}
        &\iinttr f(t,x,v)\phi(t,x,v) \,\dx\dv - \iinttr f_0(x,v)\phi(0,x,v)\,\dx\dv \\
        &\quad = \int_0^t \iinttr f (\p_s \phi + v\cdot \nabla_x \phi + E[f]\cdot \nabla_v \phi) \,\dx\dv\ds \\
        &\qquad + \dfrac{1}{\tau} \displaystyle \int_0^t \iinttr -\nabla_v\phi \cdot \Big(\kappa \nabla_v f - ((u_f- v)f) \Big) \,\dx\dv\ds 
    \end{split} \end{equation}
for each $t\in [0,\infty)$ and $\phi\in C_c^\infty([0,t] \times\T^d\times\R^d)$.
Moreover, $f$ satisfies the kinetic entropy inequality \eqref{eq : KEI} for almost every $t\ge 0$.
\end{theorem}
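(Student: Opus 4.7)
\textbf{Proof proposal for Theorem \ref{thm:gws}.} The plan is to construct $f$ as the limit of a sequence of regularized solutions $f^\varepsilon$ and then verify the weak formulation \eqref{eq:weakform} together with the entropy inequality \eqref{eq : KEI} using compactness and lower semicontinuity. For the approximation, I would mollify the two nonlocal coefficients: replace the bulk velocity by $u_f^\varepsilon := \frac{J_\varepsilon \ast (\rho_f u_f)}{J_\varepsilon \ast \rho_f + \varepsilon}$, which is bounded and smooth in $x$, and mollify the field $E[f]^\varepsilon = J_\varepsilon \ast E[f]$. In the degenerate case $\kappa=0$ I would additionally add a vanishing viscosity $\varepsilon \Delta_v f$ to regain parabolic regularity; this term drops out in the limit thanks to the entropy bound. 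The regularized equation, coupled with the Poisson--Boltzmann equation $-\Delta \Phi = \rho^\varepsilon - e^\Phi$, can be solved by a Schauder fixed point: given an input $g$ in a convex compact subset of $L^1((0,T);L^1(\T^d \times \R^d))$ with prescribed mass and second moment, one first constructs $\Phi[g]$ via the well-posedness of the Poisson--Boltzmann equation in the cited work \cite{choikoosong2025} (noting that $d \geq 2$ is needed for the Sobolev embeddings controlling $e^\Phi$), and then solves the resulting linear Fokker--Planck equation with smooth bounded drift by classical parabolic theory.

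\textbf{A priori estimates.} On the regularized solution $f^\varepsilon$, I would establish, uniformly in $\varepsilon$: (i) non-negativity and mass conservation by maximum principle and direct integration; (ii) the entropy inequality \eqref{eq : KEI} by differentiating $\calE_\kappa[f^\varepsilon]$ in time---note the mollification preserves the variational structure because, after integration by parts, the dissipation $\calD_\kappa[f^\varepsilon]$ still decouples from $u_{f^\varepsilon}^\varepsilon$ after writing $\nabla_v \cdot ((u_{f^\varepsilon}^\varepsilon - v)f^\varepsilon) = \nabla_v \cdot ((u_{f^\varepsilon}^\varepsilon - v)f^\varepsilon)$ and using that $u^\varepsilon_{f^\varepsilon}$ is uniformly bounded in $L^2$ by the kinetic energy; (iii) the second moment bound \eqref{eq: moments tau} via the $f\log f$-control of Lemma \ref{lem:flogf:ctrl}; and (iv) an $L^\infty$ estimate obtained by noting that along the regularized characteristics $f^\varepsilon \circ \Psi^\varepsilon_t$ satisfies a heat-type equation with divergence-free transport in phase space up to the factor $e^{dt/\tau}$ coming from $\nabla_v \cdot v = d$, so that $\|f^\varepsilon(t)\|_{L^\infty} \leq e^{dt/\tau}\|f_0\|_{L^\infty}$. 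In the case $\kappa>0$, the entropy dissipation yields $\kappa \nabla_v \sqrt{f^\varepsilon} \in L^2_{\rm loc}$, hence $\kappa \nabla_v f^\varepsilon \in L^2_{\rm loc}$ through the $L^\infty$ bound.

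\textbf{Passing to the limit.} The uniform estimates give weak-$\ast$ compactness of $f^\varepsilon$ in $L^\infty_{\rm loc}(L^\infty)$; the key tool to upgrade to strong convergence of macroscopic quantities is the averaging lemma applied to the kinetic transport equation $\partial_t f^\varepsilon + v \cdot \nabla_x f^\varepsilon = $ (terms bounded in $L^2_t L^2_v H^{-1}_x$, by the $L^\infty$ bound on the drift $u_{f^\varepsilon}^\varepsilon$ and the dissipation estimate). This yields $\rho^\varepsilon \to \rho$ strongly in $L^p_{\rm loc}$, and also $\rho^\varepsilon u^\varepsilon \to \rho u$ strongly in $L^1_{\rm loc}$ via an additional truncation in velocity combined with the uniform second moment bound. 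Then the Lipschitz stability of the Poisson--Boltzmann equation (again from \cite{choikoosong2025}) gives strong convergence $\Phi^\varepsilon \to \Phi$ in $W^{1,p}$ and $e^{\Phi^\varepsilon} \to e^\Phi$ in $L^1$ by uniform equi-integrability. These convergences suffice to identify $u_{f^\varepsilon}^\varepsilon \to u_f$ in the sense needed for the nonlinear drift, and then \eqref{eq:weakform} follows by passing to the limit in each term. Finally, the entropy inequality for $f$ is recovered from \eqref{eq : KEI} applied to $f^\varepsilon$ using the convexity of $H_\kappa$ in $f$ and of $(\Phi-1)e^\Phi +1$ viewed as a function of $e^\Phi$, together with Fatou's lemma for the dissipation term written in the convex form $\calD_\kappa[f^\varepsilon] = |\kappa \nabla_v \sqrt{f^\varepsilon}/\sqrt{f^\varepsilon} + (v - u_{f^\varepsilon})|^2 f^\varepsilon$.

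\textbf{Main obstacle.} The most delicate point is the tight coupling between the nonlinear drift $u_f$ and the exponential potential $e^\Phi$ in the compactness step: neither term is weakly continuous, and the exponential nonlinearity amplifies any $L^p$ defect. The viable path is to ensure that $\rho^\varepsilon$ converges strongly in some $L^p$ with $p>1$, which via the Poisson--Boltzmann stability propagates to strong convergence of $e^{\Phi^\varepsilon}$ and $\nabla \Phi^\varepsilon$, while the averaging lemma simultaneously upgrades $\rho^\varepsilon u^\varepsilon$ in $L^1_{\rm loc}$---the interplay of these two strong compactness ingredients is what makes the dimensional restriction $d \geq 2$ enter naturally through Sobolev embeddings controlling $e^\Phi$ in terms of $\|\rho\|_{L^p}$.
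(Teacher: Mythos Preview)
Your overall architecture---regularize, obtain the entropy inequality at the approximate level, extract uniform moment and $L^\infty$ bounds, pass to the limit via velocity averaging, and recover \eqref{eq : KEI} by lower semicontinuity---matches the paper's. The genuine gap is in step (ii): your choice of bulk-velocity regularization does \emph{not} yield the entropy inequality for $f^\varepsilon$. If one carries out the entropy computation with drift $u_f^\varepsilon$ in place of $u_f$, one obtains
\[
\ddt\iinttr H_\kappa[f^\varepsilon]\,\dx\dv + (\text{potential energy}) = -\iinttr \frac{|\kappa\nabla_v f^\varepsilon - (u_f^\varepsilon - v)f^\varepsilon|^2}{f^\varepsilon}\,\dx\dv + \intt \rho_{f^\varepsilon}\,u_{f^\varepsilon}^\varepsilon\cdot(u_{f^\varepsilon}^\varepsilon - u_{f^\varepsilon})\,\dx,
\]
and the last term must be nonpositive for the inequality to close. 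This holds precisely when $u_f^\varepsilon = \alpha(x)\,u_f$ with $0\le \alpha\le 1$ pointwise, which is what the paper's truncation $u_f^{(\varepsilon)} = \rho_f u_f/(\rho_f + \varepsilon(1+|\rho_f u_f|))$ achieves. Your convolution-based $u_f^\varepsilon = J_\varepsilon*(\rho_f u_f)/(J_\varepsilon*\rho_f + \varepsilon)$ is in general not parallel to $u_f$, so the sign is lost and you have no uniform-in-$\varepsilon$ entropy bound. Your sentence ``the dissipation still decouples from $u_{f^\varepsilon}^\varepsilon$'' glosses over exactly this cross term.

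The paper resolves this by a two-parameter scheme: a pointwise truncation $u_f^{(\varepsilon)}$ (good sign, but not smooth enough to solve the equation classically) composed with an additional convolution $\theta^\delta * u_f^{(\varepsilon)}$ (smooth, but breaks the entropy structure). At the $(\varepsilon,\delta)$-level there is \emph{no} entropy inequality; uniform-in-$\delta$ bounds are obtained by cruder means since $\varepsilon$ is frozen. Only after $\delta\to 0$ does one recover the approximate entropy inequality \eqref{eq:fp:kei:ve}, and it is this inequality that drives the uniform-in-$\varepsilon$ estimates. A secondary point: the paper also uses a \emph{double} regularization of the field (mollify $\rho$ before solving Poisson--Boltzmann, then mollify $\nabla\Phi$ again) to make the force term combine correctly with the potential energy in the entropy identity; your description of the field regularization is ambiguous on this, and the single mollification $J_\varepsilon * E[f]$ with unmollified $\rho$ in the Poisson--Boltzmann equation would not close either.
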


\begin{remark}  \label{rem: dim}
We claim here that the weak solution $f$ constructed in Theorem \ref{thm:gws} satisfies the integrability condition \eqref{sr:h} of Theorem \ref{thm:hdr}, at least in dimensions $d=2,3$. Indeed, from $f \in L^\infty_{\rm loc}([0,\infty);L^\infty(\T^d \times \R^d))$ and the uniform second moment bounds \eqref{eq: moments tau}, interpolation shows $\rho_f \in L^\infty_{\rm loc}([0,\infty);L^{\frac{d+2}{d}}(\T^d))$. Elliptic regularity and the Sobolev embedding then yield
\[
\nabla\Phi[f] \in L^\infty_{\rm loc}([0,\infty);L^{q_d}(\T^d)),
\]
where $q_d$ can be arbitrarily large for $d=2$, and $q_d = 15/4$ for $d=3$, which ensures that the product $\rho_f \nabla \Phi[f]$ belongs to $L^\infty_{\mathrm{loc}}([0,\infty); L^1(\T^d))$ and is thus well-defined in the sense of distributions. For higher dimensions $d \ge 4$, however, we face a critical obstruction. Indeed, we only have 
\[
\frac{d+2}{d} \le \frac{3}{2} \quad \mbox{and} \quad q_d \le \frac{12}{5}, 
\]
which yields 
\[
\frac{d}{d+2} + \frac{1}{q_d} \ge \frac{13}{12}>1. 
\]
As a result, we cannot guarantee that the term $\rho_f \nabla \Phi[f]$ is defined in the distributional sense, and the present analysis does not extend to $d \ge 4$. 
\end{remark}

The existence theory for kinetic equations of Fokker--Planck type has been extensively developed in various settings, particularly in the absence of nonlinear coupling or force fields. Let us begin by revisiting works on the classical linear Fokker--Planck operator
\begin{align*}
    \mathscr{L}[f] := \nabla_v\cdot (\nabla_v f + vf),
\end{align*}
whose analytic properties have been thoroughly studied. Foundational work on hypoellipticity by H\"ormander \cite{Hormander1967} revealed that the operator $\mathscr{L}$ exhibits strong smoothing effects, owing to its structure of non-trivial commutators. These hypoelliptic properties have since motivated a large body of work on hypocoercivity and regularity for kinetic equations; we refer to \cite{Bouchut2002, CarrapatosoMischler, GolseImbertMouhutVasseur2019, Zhu2024, Zhu2025} for further developments, including refined velocity averaging techniques and quantitative convergence estimates.

In the presence of an electrostatic field determined by the Poisson equation $-\Delta \Phi[f] = \rho_f$, the corresponding linear VPFP system has also received considerable attention. In this context, the global existence of weak solutions and their long-time behavior have been established under various geometric configurations, including periodic domains and bounded settings \cite{bonillacarrillosoler1997, Carrillo1998}. These works rely heavily on entropy dissipation methods combined with elliptic regularity for the field equations.

Regarding the nonlinear operator $\eqref{CO}$, the analysis becomes significantly more intricate. In \cite{Choi2016}, global classical solutions were constructed in a perturbative framework.  
Although the force field is not present in that setting, the result already captures key nonlinear features relevant to our model. More delicate situations, involving non-constant temperature and additional nonlinearities in both drift and diffusion, were addressed in \cite{choihwangyoo}, again without the presence of a self-consistent field.

Theorem \ref{thm:gws} extends these previous results by establishing the global existence of weak entropy solutions to a nonlinear and self-consistent kinetic equation with Fokker--Planck-type diffusion, nonlocal interaction, and nonlinear relaxation.


\subsubsection*{Proof strategy for Theorem \ref{thm:gws}}

We now outline the main steps of the proof of Theorem \ref{thm:gws}. The strategy follows a multi-step approximation and compactness procedure, carefully adapted to ensure the propagation of key energy and moment bounds, and the preservation of the entropy structure.

\medskip

$\bullet$ {\bf Regularized system.} As a first step, we introduce a regularized system, which allows us to handle both the nonlinearity of the drift $u_f$ and the electric field $E[f]$. In treating the nonlinear Fokker--Planck operator $\calN_\kappa[f]$, we follow the idea of \cite{karpermellettrivisa2013}, with one subtle difference being that we additionally impose a regularization of the truncated velocity via convolution. This is done so as to ensure the sufficient regularity of the drift term. This leads us to use two parameters $\e, \delta > 0$, where $\ve$ is used to truncate the drift while $\delta$ is imposed to enhance the regularity of the drift. For the electric field $E[f]$, we adopt the so-called \emph{double regularization} of the electric field using the parameter $\ve$, inspired by the works \cite{griffinpickeringiacobelli2021a, horst1990}. We elaborate on the advantage of this regularization method in the paragraphs to follow. 

We also remark that in the case $\kappa=0$, the lack of diffusion generally makes stability estimates difficult. Hence, in that case, the parameter $\delta$ is used as a vanishing viscosity coefficient so that both cases $\kappa=0$ and $\kappa>0$ can be treated in a uniform framework.

\medskip

$\bullet$ \textbf{Passing to $\delta \to 0$ and uniform-in-$\ve$ estimates.} Once the approximate solutions $f^{\ve,\delta}$ are constructed, we pass to the limit one by one. We first consider the limit $\delta \to 0$, which is only tied with the smoothing of the truncated bulk velocity via convolution (and that of the vanishing viscosity when $\kappa=0$). This convolution in fact breaks the symmetry of the system, and thus we do not have an entropy inequality that holds at the $\ve,\delta$-level. However, uniform-in-$\delta$ estimates are relatively easy to obtain owing to the fact that the additional regularization in $\e$ is still present. By employing velocity averaging lemmas and temporal equicontinuity arguments, we obtain suitable compactness of the distributions and their moments, and consequently pass to the limit $\delta\to 0$ to obtain a solution $f^\ve$ at the $\ve$-regularized level. In particular, the time-equicontinuity estimates, combined with Arzel\`a-Ascoli type arguments, provide a subsequence for which $f^{\ve,\delta}(t) \weakto f^\ve(t)$ weak-$*$, uniformly in time.

After taking $\delta\to 0$, the aforementioned double regularization of the electric field provides enough symmetry so that an approximate entropy inequality can be obtained at the $\ve$-level. This inequality plays a crucial role, since it provides uniform estimates that are vital in passing to the limit $\ve\to 0$, and moreover obtaining the kinetic entropy inequality \eqref{eq : KEI}.

\medskip

$\bullet$ {\bf Compactness and passing to $\ve \to 0$.} Our final step lies in passing to the limit as $\e \to 0$. The entropy bounds mentioned in the previous paragraph permit the application of velocity averaging lemmas, which provide convergence of the macroscopic variables. Based upon a close inspection of the uniform-in-$\ve$ estimates, we also find that $\{f^\ve\}$ is equicontinuous as elements of $C([0,T]; W^{-2,p}_{\rm loc}(\T^d\times\R^d))$ for a suitable $1<p<\infty$. As a result, we are again able to pass to a subsequence (independent of $t$) such that $f^\ve(t)$ is weak-$*$ compact for every $t\ge 0$. This is especially important since the kinetic entropy functional $\calE[f]$ is defined for each $t$, and our goal lies in its validation at the limit. This type of argument is similar to that of Griffin--Pickering and Iacobelli \cite{griffinpickeringiacobelli2021a}, but we adapt it to accommodate the lower regularity of the electric field that arises from merely assuming bounded second moments.

We remark that in the case $\kappa > 0$, strong compactness can be obtained by employing recent results of Sampaio \cite{sampaio2024}. This facilitates the identification of nonlinear terms, which simplifies the limiting procedure. However, such strong compactness results do not apply when $\kappa = 0$, due to the lack of diffusion. Therefore, we apply Sampaio's result only in verifying the entropy inequality in the case $\kappa > 0$, particularly in identifying weak limits involving $\nabla_v \sqrt{f}$. Our strategy, based only upon entropy bounds, velocity averaging, and temporal regularity, thus provides a uniform framework for constructing global weak entropy solutions in both the diffusive and non-diffusive regimes.

%
%
%
%
%
\subsection{Organization of the paper}
The remainder of the paper is structured as follows. In Section \ref{sec : hydrodynamic limit}, we establish the modulated energy estimate \eqref{eq: conv mod egy} and complete the proof of the hydrodynamic limit result stated in Theorem \ref{thm:hdr}. Section \ref{sec : weak solutions fokker planck} is devoted to the construction of global-in-time weak entropy solutions to the ionic VPFP system  \eqref{main eq}, thereby proving Theorem \ref{thm:gws}. Finally, Appendix \ref{app: QE} provides the proof of the quantitative convergence results \eqref{eq: tauconv l2}--\eqref{eq: conv of dist g1 k=0} in Theorem \ref{thm:hdr}.

%
%
%
%
%

\section{Hydrodynamic limits to ionic Euler--Poisson equations} \label{sec : hydrodynamic limit}
%
%
%
%
%

\subsection{Auxiliary electron velocity field $u_e$ in the limiting system}
As discussed in the introduction, one of the main difficulties in establishing the modulated energy estimates lies in controlling the contribution from the electron density. To overcome this issue, we introduce an auxiliary velocity field $u_e$ associated with the electrons, defined as a solution to the continuity equation \eqref{eq:u_e}. In this subsection, we justify the well-posedness and regularity of $u_e$, starting with the following proposition:
\begin{proposition}\label{lem:u_e}
Let $(\rho,u,\Phi)\in C([0,T];H^{s}(\T^d)\times H^{s+1}(\T^d)\times H^{s+1}(\T^d))$ denote the unique solution to the ionic Euler--Poisson system \eqref{eq : ionic EP}.
Set $\rho_e := e^{\Phi}$. Then there exists a unique  velocity field $u_e\in L^\infty(0,T;H^{s+1}(\T^d))$ satisfying the continuity equation  
\begin{align*}
    \p_t \rho_e + \nabla_x \cdot (\rho_e u_e) = 0.
\end{align*}
\end{proposition}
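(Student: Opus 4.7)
The plan is to construct $u_e$ as the gradient of a scalar potential, which reduces the continuity equation to a uniformly elliptic problem whose well-posedness and regularity are standard. First I would establish the regularity of $\p_t \Phi$ by differentiating the Poisson--Boltzmann equation $-\Delta \Phi + e^\Phi = \rho$ in time, yielding the linear elliptic identity
\begin{equation*}
(-\Delta + e^{\Phi})\, \p_t \Phi \;=\; \p_t \rho \;=\; -\nabla_x \cdot (\rho u).
\end{equation*}
Since $s > d/2 + 1$ and $H^s(\T^d)$ is an algebra, the right-hand side lies in $C([0,T]; H^{s-1})$. The coefficient $e^{\Phi}$ is strictly positive and bounded above on $[0,T] \times \T^d$ because $H^{s+1}$ embeds continuously into $L^\infty$. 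Classical variable-coefficient elliptic regularity then gives $\p_t \Phi \in C([0,T]; H^{s+1})$, and hence $\p_t \rho_e = e^\Phi \p_t \Phi \in C([0,T]; H^{s+1})$.

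Next I would verify the compatibility condition needed for solvability. Integrating the Poisson--Boltzmann equation over $\T^d$ yields $\int_{\T^d} e^\Phi\, \dx = \int_{\T^d} \rho\, \dx$, and the right-hand side is conserved in time by the ion continuity equation; hence $\int_{\T^d} \p_t \rho_e\, \dx = 0$. With these preparations, I would seek $u_e$ in the irrotational form $u_e = -\nabla_x \varphi$. A direct computation using $\rho_e = e^\Phi$ recasts the continuity equation $\p_t \rho_e + \nabla_x \cdot (\rho_e u_e) = 0$ as
\begin{equation*}
\nabla_x \cdot (\rho_e \nabla_x \varphi) \;=\; \p_t \rho_e.
\end{equation*}
Uniform ellipticity from the two-sided bounds on $\rho_e$, together with the zero-mean condition on the right-hand side, allows Lax--Milgram to produce a unique zero-mean solution $\varphi$; bootstrapping elliptic regularity with $H^{s+1}$ coefficient and $H^{s+1}$ datum pushes $\varphi$ to $H^{s+3}(\T^d)$ uniformly in time, so that $u_e = -\nabla_x \varphi \in L^\infty(0,T; H^{s+2}(\T^d))$, a fortiori in $L^\infty(0,T; H^{s+1}(\T^d))$. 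Uniqueness within the gradient class is immediate: any two such potentials differ by a harmonic function on $\T^d$, hence by an additive constant, so $\nabla_x \varphi$ is uniquely determined.

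The main technical point I anticipate is the propagation of Sobolev regularity through the variable-coefficient divergence-form operator $\nabla_x \cdot (\rho_e \nabla_x \cdot)$. This is not genuinely obstructive but requires care: one must combine the standard elliptic gain of two derivatives for the Laplacian with Moser-type commutator estimates exploiting the algebra property of $H^{s+1}(\T^d)$, since $s+1 > d/2$. Everything else --- time regularity via continuity of the data, the Poincar\'e--Wirtinger inequality on the zero-mean class, and the positivity of $\rho_e$ --- is routine.
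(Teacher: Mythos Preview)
Your proof is correct but takes a genuinely different route from the paper. Both arguments begin the same way: differentiate the Poisson--Boltzmann equation in time to obtain $\p_t\Phi \in L^\infty(0,T;H^{s+1})$, and verify the zero-mean compatibility $\intt \p_t\rho_e\,\dx = 0$. The divergence then occurs in how the continuity equation is solved. The paper solves the \emph{constant-coefficient} problem $\nabla\cdot\bfv = -\p_t\rho_e$ by inverting the Laplacian on the torus (taking $\bfv = -\nabla\psi$ with $-\Delta\psi = -\p_t\rho_e$), and then defines $u_e := e^{-\Phi}\bfv$; the required regularity $u_e\in H^{s+1}$ follows from the algebra property since $e^{-\Phi}\in H^{s+1}$. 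You instead seek $u_e = -\nabla\varphi$ directly and solve the \emph{variable-coefficient} divergence-form equation $\nabla\cdot(\rho_e\nabla\varphi) = \p_t\rho_e$. The paper's route is technically lighter --- Fourier analysis on $\T^d$ suffices, with no need for Moser-type commutator estimates --- whereas your route yields an irrotational $u_e$ and a clean uniqueness statement within the gradient class, and in fact delivers one extra derivative ($u_e\in H^{s+2}$). Note that the paper's $u_e = e^{-\Phi}\nabla\psi$ is not a gradient in general, so the two constructions produce different velocity fields.

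One remark on the uniqueness claim in the statement: without restricting to a class (such as gradients, as you do), $u_e$ is \emph{not} unique --- one may add $\rho_e^{-1}w$ for any divergence-free $w$. The paper's proof does not address uniqueness at all, so your restriction to the gradient class is actually a sharper treatment of this point than the paper provides.
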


The regularity of $(\rho, u, \Phi)$ in the above statement is ensured by the local well-posedness theory for the ionic Euler--Poisson system \eqref{eq : ionic EP}. We recall this result for completeness: 
\begin{lemma}\label{prop:lwp}
 Let $s>\frac{d}{2}+1$ and let $(\rho_0,u_0) \in H^s(\T^d)\times H^{s+1}(\T^d)$ satisfy
 \[
 \inf_{x\in \T^d}\rho_0(x)>0.
 \]
Then there exists a time $T > 0$ such that the ionic Euler--Poisson system \eqref{eq : ionic EP} admits a unique classical solution $(\rho, u, \Phi)$ on $[0,T]$ with initial data $(\rho_0, u_0)$, satisfying the regularity and lower bound stated in \eqref{eq:iEP:lss}.
\end{lemma}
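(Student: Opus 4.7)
The plan is to treat the ionic Euler--Poisson system as a quasilinear hyperbolic system for $(\rho,u)$ coupled through the semilinear Poisson--Boltzmann relation $-\Delta\Phi = \rho - e^\Phi$. I will follow the classical symmetrization--iteration--energy estimate scheme for hyperbolic systems, adapted to handle the exponential nonlinearity of the elliptic block.

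\textbf{Step 1: Poisson--Boltzmann solver.} First, I would set up the solution operator $\rho \mapsto \Phi$ for $-\Delta\Phi + e^\Phi = \rho$. Existence and uniqueness follow from the direct method applied to the strictly convex functional $J(\Phi)=\int_{\T^d} \bigl(\tfrac12|\nabla\Phi|^2 + e^\Phi - \rho\Phi\bigr)\,\dx$, combined with strict monotonicity of $\Phi\mapsto e^\Phi$. A bootstrap via standard elliptic regularity gives $\Phi \in H^{s+2}(\T^d)$ whenever $\rho \in H^s(\T^d)$ with $\rho \geq \rho_{\min}>0$, with norm bounds depending only on $\|\rho\|_{H^s}$ and $\rho_{\min}$. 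Subtracting two copies of the equation and writing $e^{\Phi_1}-e^{\Phi_2} = M(\Phi_1,\Phi_2)(\Phi_1-\Phi_2)$ with $M\geq 0$, I would derive a Lipschitz-type bound $\|\Phi_1-\Phi_2\|_{H^{s+1}} \lesssim \|\rho_1-\rho_2\|_{H^{s-1}}$ on bounded sets of $H^s$.

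\textbf{Step 2: Symmetrization.} When $\kappa>0$, introduce the logarithmic density $\ell := \sqrt{\kappa}\,\log\rho$ to cast the hyperbolic block in Friedrichs-symmetric form:
\begin{align*}
\partial_t \ell + u\cdot\nabla\ell + \sqrt{\kappa}\,\nabla\cdot u &= 0,\\
\partial_t u + u\cdot\nabla u + \sqrt{\kappa}\,\nabla\ell &= -\nabla\Phi.
\end{align*}
When $\kappa=0$, I would instead work in the primitive variables, solving the transport equation for $\rho$ and the forced Burgers equation for $u$ along the flow of $u$; the analysis then mirrors the $\kappa>0$ scheme without the log-transform.

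\textbf{Step 3: Iteration and uniform $H^s$ estimates.} Construct approximations iteratively: given $(u^n,\Phi^n)$, solve the linear transport equation for $\rho^{n+1}$ with velocity $u^n$ and the linear forced equation for $u^{n+1}$ with forcing $-\nabla\Phi^n$, then recover $\Phi^{n+1}$ from the Poisson--Boltzmann solver of Step 1 with source $\rho^{n+1}$. Close uniform-in-$n$ estimates in $H^s\times H^{s+1}$ using Kato--Ponce and Moser-type commutator inequalities, with the forcing $\nabla\Phi^n$ controlled in $H^{s+1}$ by the Lipschitz/regularity bound of Step 1. A short-time continuity argument preserves the strict positivity $\inf \rho^n \geq \rho_{\min}/2$ on a uniform interval $[0,T]$.

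\textbf{Step 4: Limit passage and uniqueness.} Show $\{(\rho^n,u^n)\}$ is Cauchy in $C([0,T];L^2)$ by estimating differences of successive iterates (crucially exploiting the Lipschitz bound on $\rho\mapsto\Phi$), then interpolate with the uniform $H^s\times H^{s+1}$ bound to obtain convergence in all lower norms and weak-$*$ convergence at the top level. A Bona--Smith regularization argument upgrades weak continuity in time to the stated strong continuity. Uniqueness follows from a standard $L^2$ energy comparison of two solutions, again relying on the Lipschitz estimate for the elliptic solver.

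\textbf{Main obstacle.} The principal difficulty is the coupling through the Poisson--Boltzmann equation: unlike the classical Euler--Poisson case where $\Phi$ depends linearly on $\rho$, the exponential nonlinearity forces one to analyze the map $\rho\mapsto\Phi$ for a \emph{semilinear} elliptic PDE whose coefficient $e^\Phi$ itself depends on the unknown. Both the contraction of the iteration scheme and the closure of the $H^s$ energy estimates hinge on the quantitative Lipschitz and smoothing bounds from Step 1; the remainder of the argument is a standard, if lengthy, quasilinear hyperbolic well-posedness analysis.
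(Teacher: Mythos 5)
The paper does not prove this lemma at all: it is quoted as a known result and the proof is deferred to \cite{lanneslinaressaut2012}, so there is no in-paper argument to compare against. Your outline is the standard one for this type of system (direct-method plus bootstrap for the semilinear elliptic block, Friedrichs symmetrization via $\ell=\sqrt{\kappa}\log\rho$, linear iteration with Kato--Ponce commutator estimates, $L^2$ contraction, Bona--Smith), and it is essentially the strategy carried out in the cited reference; in particular you correctly identify that the only genuinely non-classical ingredient is the quantitative Lipschitz/smoothing bound for the nonlinear map $\rho\mapsto\Phi$, which the paper itself supplies elsewhere (Proposition \ref{thm : existence and uniqueness} and Lemma \ref{lem : stability for Lp}).

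One small inaccuracy: in Step 3 you claim uniform iteration bounds in $H^s\times H^{s+1}$ for $(\rho,u)$. For $\kappa>0$ the symmetrized system couples $\ell$ and $u$ at the same derivative level through $\sqrt{\kappa}\,\nabla\cdot u$ and $\sqrt{\kappa}\,\nabla\ell$, so the energy method propagates $(\ell,u)\in H^s\times H^s$ only; the extra derivative on $u_0$ cannot be propagated in that regime (it can for $\kappa=0$, where $u$ solves a forced Burgers equation). This is harmless for the statement being proved, since \eqref{eq:iEP:lss} only asks for $u\in C([0,T];H^s)$, but the claim as written overshoots what the scheme delivers.
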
    

We refer to \cite{lanneslinaressaut2012} for the proof. The construction of $u_e$ then reduces to solving a divergence equation of the form $\nabla \cdot \bfv = f$, which we address using the following classical result:
\begin{lemma}\label{lem:bog}
Let $d\ge2$, $s\ge0$. If $f \in H^s(\T^d)$ satisfies
\[
\displaystyle \int_{\T^d} f \,\dx =0,
\]
then there exists $\bfv \in H^{s+1}(\T^d;\R^d)$ such that 
\[
\nabla \cdot \bfv = f.
\]
\end{lemma}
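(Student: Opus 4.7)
The plan is to solve the divergence equation explicitly on the torus via Fourier analysis. Expand $f\in H^s(\T^d)$ in Fourier series
\[
f(x) = \sum_{k\in\Z^d} \hat{f}(k)\,e^{2\pi i k\cdot x},
\]
so that the mean-zero hypothesis $\int_{\T^d} f\,\dx = 0$ translates exactly to the vanishing of the zero mode, $\hat{f}(0)=0$. This vanishing is not only necessary (every divergence on $\T^d$ has zero mean) but also precisely what is needed to remove the singularity of the Fourier symbol we are about to invert.

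For $k\ne 0$, the equation $\nabla\cdot\bfv = f$ becomes $2\pi i k\cdot\hat{\bfv}(k) = \hat{f}(k)$, which I solve with the natural choice
\[
\hat{\bfv}(k) := \frac{-ik}{2\pi|k|^2}\hat{f}(k) \quad (k\ne 0), \qquad \hat{\bfv}(0):=0,
\]
equivalently $\bfv:=-\nabla\Delta^{-1}f$ with $\Delta^{-1}$ the inverse Laplacian on the mean-zero subspace. A direct computation with the Fourier symbol then gives $\nabla\cdot\bfv = f$.

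For the $H^{s+1}$ bound, Parseval's identity yields
\[
\|\bfv\|_{H^{s+1}}^2 = \sum_{k\ne 0}\frac{(1+|k|^2)^{s+1}}{4\pi^2|k|^2}\,|\hat{f}(k)|^2,
\]
and since $(1+|k|^2)/|k|^2 \le 2$ for every $k\in\Z^d\setminus\{0\}$, the right-hand side is dominated by a constant multiple of $\|f\|_{H^s}^2$, giving $\bfv\in H^{s+1}(\T^d;\R^d)$ as required. There is no serious obstacle in this construction: the only delicate point is that the Fourier multiplier $k\mapsto -ik/(2\pi|k|^2)$ is singular at the origin, and the mean-zero hypothesis is exactly what makes the definition of $\hat{\bfv}$ at $k=0$ inconsequential. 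The dimensional restriction $d\ge 2$ is standard in the literature for Bogovskii-type constructions on bounded domains with boundary, but plays no role in the present Fourier argument on $\T^d$.
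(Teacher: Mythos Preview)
Your proof is correct and follows essentially the same approach as the paper: both solve the Poisson equation $-\Delta\psi = f$ via Fourier series on the torus (using the mean-zero condition to handle the zero mode) and set $\bfv = -\nabla\psi$, then read off the $H^{s+1}$ regularity from the Fourier symbol. The paper phrases this through the Green's function and a separate $H^{s+2}$ bound on $\psi$, while you compute the $H^{s+1}$ norm of $\bfv$ directly, but the argument is the same.
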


\begin{proof}
We consider the Poisson equation $-\Delta \psi = f$ on $\T^d$, whose unique solution is expressed through the Green's function for the Laplace equation as $\psi = G * f$ (see \cite{griffinpickering2024,titchmarsh1958} for a description of $G$ and its properties).  Taking Fourier transforms, we obtain
\[
4\pi^2|\bfk|^2 \hat{\psi}(\bfk) =  \hat{f}(\bfk), \quad \forall \, \bfk \in \Z^d. 
\]
This yields
\[
16\pi^4\sum_{\bfk \in \Z^d} |\bfk|^{2(s+2)} |\hat{\psi}(\bfk)|^2 = \sum_{\bfk \in \Z^d} |\bfk|^{2s} |\hat{f}(\bfk)|^2,
\]
which implies $\psi \in H^{s+2}(\T^d)$. Defining $\bfv := -\nabla \psi$, we conclude that $\nabla\cdot \bfv = f$ and $\bfv \in H^{s+1}(\T^d)$ as desired.
\end{proof}

Next, we establish a simple yet useful regularity estimate for the time derivative of the electron density $\rho_e := e^\Phi$, which will be instrumental in constructing the auxiliary velocity field $u_e$ in the limiting system.
\begin{lemma}
\label{lem: p_t phi regularity}
    Let $(\rho,u,\Phi)\in C([0,T];H^{s}(\T^d)\times H^{s+1}(\T^d)\times H^{s+1}(\T^d))$ be a classical solution to the ionic Euler--Poisson system \eqref{eq : ionic EP}. Then the following regularity properties hold:
    \begin{equation*}
    \begin{split}
        &e^{\Phi} \in C([0,T];H^{s+1}(\T^d)) \quad \mbox{and} \quad \p_t \Phi \in L^\infty(0,T;H^{s+1}(\T^d)).
        \end{split}
    \end{equation*}
\end{lemma}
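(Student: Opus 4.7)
The plan is to establish the two regularity claims in turn. For the first, I would invoke Moser--Nemytskii composition estimates in Sobolev spaces. For the second, the natural move is to time-differentiate the Poisson--Boltzmann equation from \eqref{eq : ionic EP} and analyze the resulting linear elliptic problem for $\p_t \Phi$ by elliptic regularity, using the continuity equation to rewrite $\p_t \rho$.

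For the regularity of $e^\Phi$: since $s+1 > d/2$, the space $H^{s+1}(\T^d)$ is a Banach algebra and embeds into $L^\infty(\T^d)$. As $\Phi \in C([0,T];H^{s+1}(\T^d))$, the quantity $\sup_{t\in [0,T]}\|\Phi(t)\|_{L^\infty}$ is finite. A standard Moser-type composition estimate (obtained by expanding $e^z=\sum z^k/k!$ and iterating the algebra property, with constants depending only on $\|\Phi\|_{L^\infty}$) yields a uniform bound on $\|e^{\Phi(t)}\|_{H^{s+1}}$. Applying the same argument to the difference
\[
e^{\Phi(t)}-e^{\Phi(t')} = (\Phi(t)-\Phi(t'))\int_0^1 e^{\theta \Phi(t)+(1-\theta)\Phi(t')}\,\nd\theta,
\]
together with the continuity of $\Phi$ in $H^{s+1}$, yields $e^\Phi \in C([0,T];H^{s+1})$.

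For the time derivative, differentiating the Poisson--Boltzmann equation $-\Delta \Phi = \rho - e^\Phi$ in time and using the continuity equation $\p_t \rho = -\nabla_x\cdot(\rho u)$ yields
\begin{equation*}
    -\Delta(\p_t\Phi) + e^{\Phi}\,\p_t\Phi = -\nabla_x \cdot (\rho u).
\end{equation*}
Since $s>d/2$, the space $H^s$ is an algebra, so $\rho u \in C([0,T];H^s)$ and hence the right-hand side lies in $C([0,T];H^{s-1})$. By the previous paragraph and positivity of $\exp$, the potential $V(t,\cdot) := e^{\Phi(t,\cdot)}$ is uniformly bounded below and above by strictly positive constants on $[0,T]\times\T^d$. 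Consequently the operator $\mathcal L := -\Delta + V$ is symmetric and coercive on $H^1(\T^d)$, and therefore defines an isomorphism $H^1 \to H^{-1}$ with operator norm uniform in $t$. A standard elliptic bootstrap exploiting the algebra property of $H^{s+1}$ (so that multiplication by $V$ preserves all intermediate Sobolev scales) upgrades this to an isomorphism $\mathcal L:H^{s+1}\to H^{s-1}$, with norm controlled by $\|V\|_{H^{s+1}}$ uniformly in $t\in[0,T]$. Applying this inverse to $-\nabla_x \cdot (\rho u) \in L^\infty(0,T;H^{s-1})$ gives $\p_t\Phi \in L^\infty(0,T;H^{s+1})$.

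The main technical point is the elliptic bootstrap, which must be carried out so that all constants remain uniform in $t$; this hinges on the $L^\infty$-in-time bound for $\|e^\Phi\|_{H^{s+1}}$ established in the first step. Beyond this, the argument is routine and invokes no structural feature of the ionic Euler--Poisson system beyond the continuity equation and the Poisson--Boltzmann relation.
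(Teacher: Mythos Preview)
Your proof is correct and follows the same overall strategy as the paper: Moser-type composition for $e^\Phi$, then time-differentiate the Poisson--Boltzmann equation and run an elliptic bootstrap on $-\Delta(\p_t\Phi)+e^\Phi\,\p_t\Phi=\p_t\rho$. The only difference is packaging: the paper first establishes $\p_t\Phi\in H^1$ by a direct energy identity, then proves an $L^\infty$ bound on $\p_t\Phi$ via $L^p$ testing against $\p_t\Phi\,|\p_t\Phi|^{p-2}e^{-\Phi}$ and sending $p\to\infty$, and only then bootstraps; you instead invoke directly that $\mathcal L=-\Delta+e^\Phi$ is a coercive isomorphism $H^1\to H^{-1}$ and bootstrap from there, which is cleaner and bypasses the intermediate $L^\infty$ step entirely.
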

\begin{proof}
The regularity of $e^\Phi$ follows directly from Moser-type estimates and the assumed regularity of $\Phi$. To control $\p_t \Phi$, we differentiate the Poisson--Boltzmann equation in time:
    \begin{align}
    \label{eq: poisson--boltzmann for p_t Phi}
        -\Delta \p_t \Phi = - e^{\Phi} \p_t \Phi + \p_t \rho .
    \end{align}
We first show $\p_t \Phi \in L^\infty(0,T;H^1(\T^d))$. Multiplying by $\p_t \Phi$ and integrating, we obtain the basic energy identity:
    \begin{align*}
        \intt |\nabla \p_t \Phi|^2 \,\dx + \intt e^{\Phi} |\p_t \Phi|^2\, \dx &= \intt \p_t\Phi \, \p_t \rho \,\dx.
    \end{align*}
Using that $e^\Phi \ge C[\rho] > 0$ from \cite[Lemma 2.15]{choikoosong2025}, we estimate
    \begin{align*}
        \intt |\nabla \p_t \Phi|^2 \,\dx + C[\rho] \intt |\p_t \Phi|^2\, \dx &= \intt \p_t\Phi \, \p_t \rho \,\dx  \le \intt \frac{C[\rho]}{2}|\p_t \Phi|^2 + \frac{1}{2C[\rho]} |\p_t \rho|^2\, \dx.
    \end{align*}
    By absorbing to the left-hand side and using $\eqref{eq : ionic EP}_1$, we deduce
    \begin{align*}
        \|\p_t \Phi\|_{H^1(\T^d)} \lesssim_\rho \|\p_t \rho\|_{L^2} = \|\rho u\|_{\dot H^1}.
    \end{align*}
Next, we demonstrate $L^p$ a priori estimates for $\p_t \Phi$. From \eqref{eq: poisson--boltzmann for p_t Phi}, we obtain for any $p\in [1,\infty)$
\begin{align*}
    \intt |\p_t \Phi|^p \,\dx &= \intt (\Delta \p_t \Phi) \p_t \Phi |\p_t \Phi|^{p-2} e^{-\Phi} + \p_t \rho \, \p_t \Phi |\p_t \Phi|^{p-2} e^{-\Phi}\,\dx \\
    &= -(p-1)\intt |\nabla \p_t \Phi|^2 |\p_t \Phi|^{p-2} e^{-\Phi} \,\dx  + \intt \p_t \Phi |\p_t \Phi|^{p-2} \nabla \p_t \Phi \cdot \nabla \Phi \, e^{-\Phi} \,\dx\\
    &\quad + \intt \p_t \rho \, \p_t \Phi |\p_t \Phi|^{p-2} e^{-\Phi}\, \dx \\
    &\le \intt |\p_t \Phi|^{p-1} |\nabla \p_t \Phi| \, |\nabla \Phi| e^{-\Phi} \,\dx + \intt |\p_t \rho| \, |\p_t \Phi|^{p-1} e^{-\Phi}\,\dx.
\end{align*}
For the first term on the right-hand side, we use H\"older and Young's inequalities to get
\begin{align*}
\intt |\p_t \Phi|^{p-1} |\nabla \p_t \Phi| \, |\nabla \Phi| e^{-\Phi} \,\dx &\le \|\p_t \Phi\|_{L^\infty}^{p-1} \|e^{-\Phi}\|_{L^\infty} \|\p_t \Phi\|_{H^1} \|\nabla \Phi\|_{L^2}  \cr
&\le C \|\p_t \Phi\|_{L^\infty}^{p-1} \cr
&\le \frac{1}{3}\|\p_t \Phi\|_{L^\infty}^p + 3^{p-1} C^p.
\end{align*}
In a similar fashion, we also estimate the second term as
\begin{align*}
 \intt |\p_t \rho| \, |\p_t \Phi|^{p-1} e^{-\Phi}\,\dx \le \|e^{-\Phi}\|_{L^\infty}\|\p_t \rho\|_{L^p}\|\p_t \Phi\|_{L^p}^{p-1} \le \frac{1}{3}\|\p_t \Phi\|_{L^p}^p + 3^{p-1}C^p .
\end{align*}
Combining the above gives
\begin{align*}
    \|\p_t \Phi\|_{L^p} \le \frac{1}{2}\|\p_t \Phi\|_{L^\infty} + 3C,
\end{align*}
and letting $p\to\infty$, we have $\|\p_t \Phi\|_{L^\infty((0,T)\times\T^d)}\le 6C$.

Finally, we apply a bootstrapping argument. Since both $\p_t \rho$ and $e^\Phi \p_t \Phi$ belong to $H^s(\T^d)$, the right-hand side of \eqref{eq: poisson--boltzmann for p_t Phi} lies in $H^s(\T^d)$ as well. By elliptic regularity, $\p_t \Phi \in H^{s+2}(\T^d)$, and hence by iterating this argument with commutator estimates, we obtain
\[
\p_t \Phi \in L^\infty(0,T;H^{s+1}(\T^d)).
\]
This completes the proof.
\end{proof}

We now return to the proof of Proposition \ref{lem:u_e}, which asserts the existence of a well-defined velocity field $u_e$ satisfying the continuity equation for $\rho_e = e^\Phi$.

\begin{proof}[Proof of Proposition \ref{lem:u_e}]
We consider the following problem:
\begin{equation}\label{eq:bog}
\nabla \cdot \bfv = -\pa_t \rho_e = - e^\Phi \pa_t\Phi
\end{equation}
together with the neutrality condition
\[
\intt \pa_t \rho_e \,\dx = \frac{\textnormal{d}}{\dt}\intt \rho\,\dx = \frac{\textnormal{d}}{\dt} \intt e^{\Phi}\, \dx =  0.
\]
It follows from Lemma \ref{lem: p_t phi regularity} that
\[
e^\Phi \pa_t\Phi \in L^\infty(0,T; H^{s}(\T^d)).
\]
Thus, Lemma \ref{lem:bog} yields a divergence-free correction vector field $\bfv \in L^\infty(0,T; H^{s+1}(\T^d))$ satisfying the above equation. Setting $u_e := e^{-\Phi} \bfv$, we conclude
\[
u_e \in L^\infty(0,T;H^{s+1}(\T^d)).
\]
This completes the proof.
\end{proof}

\begin{remark}\label{rmk:happy}
By Sobolev inequalities, the velocity field $u_e$ to \eqref{eq:bog} as constructed above satisfies
\[
u_e \in L^\infty(0,T;C^1(\T^d)),
\]
which we crucially utilize in estimating the modulated energy.
\end{remark}
%
%
%
%
%
\subsection{Modulated energy evolution and its structural decomposition}\label{sec:mod}
In this subsection, we derive a precise evolution identity for the modulated energy functional, which serves as the cornerstone for proving the quantitative bound \eqref{eq: conv mod egy} in Theorem \ref{thm:hdr}. The computation relies on a structural comparison between the kinetic formulation and the limiting Euler--Poisson system. In particular, we exploit the local conservation laws derived from the kinetic equation and the conservative structure of the limiting system to analyze the rate of change of the modulated energy. 

To simplify notation, we denote the macroscopic quantities associated with $f^\tau$ as
\[
(\rho^\tau,u^\tau,\Phi^\tau) := (\rho_{f^\tau},u_{f^\tau}, \Phi[f^\tau]).
\]

The next lemma provides a quantitative identity for the time evolution of the modulated energy, which will be the starting point of our asymptotic analysis. 

\begin{lemma}\label{lem:fml}
Let $f^\tau$ be a weak solution to the kinetic system \eqref{main eq}, satisfying the weak formulation \eqref{eq:weakform} and the kinetic entropy inequality \eqref{eq : KEI}, along with the integrability condition \eqref{sr:h}. Let $(\rho, u, \Phi)$ be a classical solution to \eqref{eq : ionic EP}. Then the modulated energy functional satisfies the identity
\[
\begin{split}
\calF(\tilde U^\tau| \tilde U) &=  \calF( \tilde U^\tau_0| \tilde U_0)  + \Big(\calF(\tilde U^\tau) - \calF(\tilde U^\tau_0) \Big)  +\int_0^t \intt \nabla u : \intr v\otimes \{(u^\tau - v)f^\tau - \kappa \nabla_v f^\tau\} \, \dv \dx \ds\\
&\quad - \int_0^t\intt \nabla u : \rho^\tau(u^\tau -u)\otimes (u^\tau -u) \,\dx \ds  +\int_0^t\intt \{-\Delta (\Phi^\tau - \Phi) u\} \cdot \nabla (\Phi^\tau - \Phi) \,\dx\ds \\
&\quad +\int_0^t\intt (\rho_e^\tau - \rho_e)(u - u_e)\cdot \nabla(\Phi^\tau -\Phi) \,\dx \ds .
\end{split}
\]
\end{lemma}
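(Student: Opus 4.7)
The plan is to derive the identity by a direct but bookkeeping-heavy computation of the time evolution of $\calF(\tilde U^\tau|\tilde U)$. The starting point is the Taylor expansion structure in \eqref{eq: first-order expansion of eta}:
\begin{equation*}
\calF(\tilde U^\tau|\tilde U) = \calF(\tilde U^\tau) - \calF(\tilde U) - \nd\calF(\tilde U)\cdot(\tilde U^\tau-\tilde U),
\end{equation*}
together with the observation that $\calF(\tilde U)$ is time-independent by energy conservation for smooth solutions of \eqref{eq : ionic EP}. Hence the lemma reduces to computing $-\int_0^t \frac{\nd}{\nd s}[\nd\calF(\tilde U)\cdot(\tilde U^\tau-\tilde U)]\,\ds$ and identifying it with the four stated integrals. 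A direct expansion of $\nd\calF(\tilde U)\cdot(\tilde U^\tau-\tilde U)$ produces four spatial integrals in which the smooth weights $-\tfrac{|u|^2}{2}+\kappa(\log\rho+1)$, $u$, $\nabla\Phi$, and $\Phi=\log\rho_e$ are paired, respectively, with $\rho^\tau-\rho$, $m^\tau-m$, $\nabla(\Phi^\tau-\Phi)$, and $\rho_e^\tau-\rho_e$; I will differentiate these four pieces separately.

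For the ion density and momentum pieces, I would pair the local balance laws \eqref{eq : local balance laws}, which are well-defined distributionally in $(t,x)$ thanks to \eqref{sr:h} and the second-moment bound \eqref{eq: moments tau}, against the $\tilde U$-dependent test functions appearing above, and use \eqref{eq : ionic EP} to differentiate the weights in time. Expanding the nonlinear ion flux $\rho^\tau u^\tau\otimes u^\tau$ against $\nabla u$ and subtracting the background $\rho u\otimes u$ produces the modulated quadratic $-\int\nabla u:\rho^\tau(u^\tau-u)\otimes(u^\tau-u)\,\dx$. The remaining pressure-type moment is then reorganized using the elementary identities
\begin{equation*}
\int_{\R^d} v\otimes(u^\tau-v)f^\tau\,\dv = -\int_{\R^d}(u^\tau-v)\otimes(u^\tau-v)f^\tau\,\dv,\qquad \int_{\R^d}v\otimes\nabla_v f^\tau\,\dv = -\rho^\tau\bbI,
\end{equation*}
which, once combined with the $\kappa$-contribution coming from differentiating the relative pressure $\kappa P(\rho^\tau|\rho)$, assemble exactly into the combined flux term $\int\nabla u:\int v\otimes[(u^\tau-v)f^\tau-\kappa\nabla_v f^\tau]\,\dv\,\dx$.

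The main obstacle lies in the electric and electron pieces, where the exponential nonlinearity $\rho_e=e^\Phi$ prevents a clean transport treatment of $\rho_e^\tau$. For the $\nabla\Phi$-piece, I would differentiate in time, substitute the Poisson--Boltzmann equations $-\Delta\Phi^\tau=\rho^\tau-e^{\Phi^\tau}$ and $-\Delta\Phi=\rho-e^\Phi$, and integrate by parts against the velocity $u$ arising from the ion momentum balance; the tight cancellations between the force terms $\rho^\tau\nabla\Phi^\tau$, $\rho\nabla\Phi$, and the modulation leave exactly the transport residue $\int\{-\Delta(\Phi^\tau-\Phi)u\}\cdot\nabla(\Phi^\tau-\Phi)\,\dx$. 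For the $\Phi(\rho_e^\tau-\rho_e)$-piece, the essential input is Proposition \ref{lem:u_e}: I would handle $\p_t\rho_e$ through $\p_t\rho_e+\nabla\cdot(\rho_e u_e)=0$ and $\p_t\rho_e^\tau$ through the time-differentiated Poisson--Boltzmann identity for $\Phi^\tau$, then integrate by parts using the $C^1$-regularity of $u_e$ from Remark \ref{rmk:happy}; this generates the mismatch cross-term $\int(\rho_e^\tau-\rho_e)(u-u_e)\cdot\nabla(\Phi^\tau-\Phi)\,\dx$, while all remaining residues either cancel with the electric field contribution or assemble into divergences that vanish on $\T^d$. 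Collecting all contributions over $[0,t]$ then produces the stated identity. I expect the electron piece to be the most delicate step: it is only through the auxiliary field $u_e$ that the exponential modulation admits a conservative reformulation compatible with integration by parts, which is ultimately what closes the identity in the stated form.
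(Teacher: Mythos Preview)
Your proposal is correct and follows essentially the same approach as the paper: both compute $\calF(\tilde U^\tau|\tilde U)-\calF(\tilde U^\tau)$ via the Taylor expansion, handle the ion part through the local balance laws \eqref{eq : local balance laws} and standard relative entropy identities (yielding the $\nabla u$-terms), and resolve the electron/electric contributions by invoking the auxiliary velocity $u_e$ from Proposition \ref{lem:u_e} together with the time-differentiated Poisson--Boltzmann equation, producing exactly the $-\Delta(\Phi^\tau-\Phi)u$ and $(\rho_e^\tau-\rho_e)(u-u_e)$ integrals. The paper organizes the bookkeeping by splitting into $\eta$-, $P(\rho_e)$-, and $|\nabla\Phi|^2$-pieces rather than by state component, but the mechanism and cancellations are identical.
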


\begin{remark}
The last term in the identity,
\[
\int_0^t\intt (\rho_e^\tau - \rho_e)(u - u_e)\cdot \nabla(\Phi^\tau -\Phi) \,\dx \ds
\]
is unique to the ionic model and does not appear in the electronic Vlasov--Poisson setting. The presence of the auxiliary velocity field $u_e$ clarifies the structure of the asymptotic interaction between ions and electrons, and facilitates the dissipation analysis in the modulated energy framework.
\end{remark}

\begin{proof}[Proof of Lemma \ref{lem:fml}]

In order to clearly deliver our ideas, we provide the computations here at a formal level. We leave comments justifying these calculations in Remark \ref{rmk:justify} below.

The goal is to compute the time derivative of the difference $\calF(\tilde U^\tau| \tilde U) - \calF(\tilde U^\tau)$, which measures the deviation of the kinetic solution from the limiting hydrodynamic state $(\rho,u,\Phi)$. To this end, we introduce the modulated energy $\eta(U^\tau|U)$ and use the shorthand notations
\begin{align*}
    &\eta(U^\tau|U) := \frac{\rho^\tau}{2}|u^\tau - u|^2 + \kappa P(\rho^\tau|\rho), \quad  \rho_e := e^{\Phi}, \qquad \rho_e^\tau := e^{\Phi^\tau},
\end{align*}
with $\eta(U)$ defined as in \eqref{def: eta}. From the first-order expansion \eqref{eq: first-order expansion of eta}, we write the modulated free energy as
\begin{align}
\label{def: B_gamma kappa}
    \calF(\tilde U^\tau|\tilde U) = \eta(U^\tau|U) + P(\rho_e^\tau|\rho_e) + \frac{1}{2}|\nabla \Phi^\tau - \nabla \Phi|^2.
\end{align}
Thus, we are led to evaluate
\[
\begin{split}
    \calF(\tilde U^\tau | \tilde U) - \calF(\tilde U^\tau) &= \eta(U^\tau|U) - \eta(U^\tau)  \cr
    &\quad + P(\rho_e^\tau | \rho_e) - P(\rho_e^\tau)   \cr
    &\quad + \frac{1}{2}|\nabla(\Phi^\tau - \Phi)|^2 - \frac{1}{2}|\nabla\Phi^\tau|^2\cr
    &=: I + II +III.
\end{split}
\]
We begin by analyzing the first term $I$. The limiting system \eqref{eq : ionic EP} can be written in conservative form. Introducing
\begin{align*} 
    &U := \begin{pmatrix} \rho \\ m \end{pmatrix}, \quad m:=\rho u,
    \quad A(U) := \begin{pmatrix} m \\  \frac{m\otimes m}{\rho} + \kappa\rho  \mathbb{I} \end{pmatrix} , \quad R(U) := \begin{pmatrix}
    0 \\  - \rho \nabla \Phi \end{pmatrix},
\end{align*}
we have
\begin{align*}
    \pa_t U + {\rm{div}}_x A(U) = R(U).
\end{align*}

We now apply standard relative entropy estimates (see e.g., \cite[Lemma 3.1]{carrillochoijung2021}), yielding
\[
\begin{split}
\ddt \intt \eta(U^\tau|U)\,\dx -\intt \pa_t\lt(\eta(U^\tau)\rt) 
& = - \intt \nd \eta(U)\cdot (\pa_t U^\tau + {\rm{div}}_x A(U^\tau) - R(U^\tau))\, \dx \\
&\quad +\intt \lt[ \nd^2 \eta(U) {\rm{div}}_x A(U) \cdot (U^\tau - U) + \nd\eta(U){\rm{div}}_x A(U^\tau )\rt] \dx \\
&\quad -\intt \lt[\nd^2\eta(U)R(U)\cdot (U^\tau -U)+ \nd \eta(U)R(U^\tau )\rt]  \dx \\
&=: I_1 + I_2 + I_3.
\end{split}
\]
Here $\textnormal{d}\eta$ denotes the differential of $\eta$ with respect to the conservative variable $U$. A direct computation gives
\begin{align}\label{eq: deta}
    \textnormal{d}\eta(U) = \begin{pmatrix}
        -\frac{|\rho u|^2}{2\rho^2} + \log \rho + 1 \\ u
    \end{pmatrix}, 
    \quad \textnormal{d}^2\eta(U) = \begin{pmatrix}
        * & -\frac{\rho u}{\rho^2} \\[6pt]
        * & \frac{1}{\rho}
    \end{pmatrix}.
\end{align}

We now compute each term in turn. For $I_1$, multiplying the kinetic equation \eqref{main eq} by $(1,v)$ and integrating in $v$, we deduce
\[
I_1 = \intt \nabla u :  \lt( \int_{\R^d} v \otimes \lt\{ (u^\tau - v) f^\tau - \kappa\nabla_v f^\tau\rt\} \dv \rt)\dx.
\]
For $I_2$, we apply the standard identity for hyperbolic systems (see \cite{dafermos1979, KMT2015}) to obtain
\[
I_2 = - \intt \nabla_x \textnormal{d}\eta(U):A(U^\tau | U )\,\dx  = - \intt \nabla u : \rho^\tau(u^\tau -u)\otimes (u^\tau -u)\, \dx.
\]
For $I_3$, a direct calculation using \eqref{eq: deta} gives
\begin{equation*}
        I_3 = \intt \rho^\tau u \cdot (\nabla\Phi^\tau - \nabla\Phi) + \rho^\tau u^\tau \cdot \nabla\Phi \,\dx.
\end{equation*}
 
Next, we estimate $II$ and $III$. For $II$, using the definition of $u_e$ from Proposition \ref{lem:u_e}, we deduce
\begin{align*}
\ddt II &= - \ddt \intt \Big\{ P(\rho_e) + P'(\rho_e) (\rho_e^\tau - \rho_e) \Big\} \,\dx\cr
&= - \intt \Big\{ P'(\rho_e) \p_t\rho_e + P''(\rho_e)\p_t\rho_e (\rho_e^\tau - \rho_e) + P'(\rho_e) \p_t(\rho_e^\tau - \rho_e) \Big\} \,\dx \\
&= - \intt P'(\rho_e)\pa_t\rho_e^\tau \,\dx - \intt P''(\rho_e)(\pa_t\rho_e)(\rho_e^\tau-\rho_e) \,\dx  \\
&= - \intt \log\rho_e \, \pa_t\rho_e^\tau \,\dx + \intt \nabla \cdot (\rho_e u_e) \lt(\rho_e^\tau/\rho_e -1\rt)\,\dx \\
&= - \intt \Phi \, \p_t \rho_e^\tau \,\dx - \intt (\rho_e^\tau u_e) \cdot \nabla (\Phi^\tau - \Phi) \,\dx.
\end{align*}
For $III$, using the Poisson--Boltzmann equation, we find
\begin{align*}
\ddt III &= \ddt \intt \Big\{-\nabla\Phi^\tau \cdot \nabla\Phi + \frac{1}{2}|\nabla\Phi|^2 \Big\}\,\dx \cr
&= - \intt \nabla \Phi \cdot \pa_t \nabla \Phi^\tau\,\dx - \intt \pa_t \nabla \Phi \cdot \nabla (\Phi^\tau - \Phi) \,\dx \\
&= - \intt \Phi \, \pa_t(\rho^\tau - \rho_e^\tau)\,\dx - \intt (\Phi^\tau -\Phi)\pa_t(\rho-\rho_e)\,\dx \\
&= - \intt \Phi \, \pa_t(\rho^\tau - \rho_e^\tau)\,\dx + \intt \nabla\cdot(\rho u - \rho_e u_e) \log(\rho_e^\tau /\rho_e)\,\dx \\
&= \intt \Phi \, \p_t\rho_e^\tau \,\dx - \intt \nabla\Phi \cdot \rho^\tau u^\tau \,\dx - \intt (\rho u - \rho_e u_e) \cdot \nabla (\Phi^\tau - \Phi) \,\dx .  
\end{align*}
Combining all terms, we note
\[
\begin{split}
I_3+\ddt II + \ddt III &=  \intt \Big\{ (\rho^\tau - \rho) u - (\rho_e^\tau - \rho_e) u_e \Big\}\cdot \nabla (\Phi^\tau - \Phi) \,\dx \\
&= \intt \Big\{(\rho^\tau -\rho_e^\tau - \rho + \rho_e)u + (\rho_e^\tau - \rho_e)(u - u_e) \Big\} \cdot \nabla(\Phi^\tau-\Phi) \,\dx \\
&= \intt \{-\Delta (\Phi^\tau - \Phi) u\} \cdot \nabla (\Phi^\tau - \Phi) + (\rho_e^\tau - \rho_e)(u - u_e)\cdot \nabla(\Phi^\tau -\Phi) \,\dx.
\end{split}
\]
Together with the previous computations for $I_1$ and $I_2$, this completes the derivation of the time evolution of $\calF(\tilde U^\tau| \tilde U) - \calF(\tilde U^\tau)$, and thus concludes the proof.
\end{proof}

\begin{remark}\label{rmk:justify}
The computations above can be justified by using the local conservation laws \eqref{eq : local balance laws}, and thus it is crucial here that \eqref{sr:h} is satisfied. Notice in particular that thanks to \eqref{sr:h}, the integral arising in $I_3$ is well-defined. Of the computational identities that are in the proof of Lemma \ref{lem:fml}, we refer to \cite{hankwan2011, kang2018} for the derivation of the identity satisfied by $I$. Below let us provide the justification of the computations for $II$ and $III$. We utilize the time-derivative of the Poisson--Boltzmann equation, which holds in the sense of distributions:
\begin{align*}
    -\p_t \Delta \Phi^\tau + \p_t \rho_e^\tau = - \nabla_x\cdot (\rho^\tau u^\tau) \quad \text{in}\quad \calD'([0,T]\times\T^d). 
\end{align*}
Since $\Phi$ is a strong solution, $\Phi = P'(\rho_e)$ is admissible as a test function into the above, which allows us to obtain
\begin{align*}
    &\int_0^t \intt -\nabla\Phi^\tau \cdot \nabla (P''(\rho_e) \p_t \rho_e) - \p_t(P'(\rho_e) ) \, \rho_e^\tau \,\dx\ds + \intt \Big(\nabla\Phi^\tau \cdot \nabla (P'(\rho_e)) + P'(\rho_e) \rho_e^\tau\Big)(t) \,\dx  \\
    &\quad -\intt \Big(\nabla\Phi^\tau \cdot \nabla (P'(\rho_e)) + P'(\rho_e) \rho_e^\tau\Big)(0) \,\dx \\
    &\qquad = \int_0^t \intt P''(\rho_e) \nabla \rho_e \cdot \rho^\tau u^\tau \,\dx\ds.
\end{align*}
Equivalently (from $P'(\rho_e) = \Phi$)
\begin{align*}
    &\intt \Big(\nabla\Phi^\tau \cdot \nabla \Phi + P'(\rho_e) \rho_e^\tau\Big)(t) - \Big(\nabla\Phi^\tau \cdot \nabla \Phi + P'(\rho_e) \rho_e^\tau\Big)(0) \,\dx \\
    &\quad = \int_0^t \intt -\nabla \Phi^\tau \cdot \nabla(P''(\rho_e)\p_t \rho_e) - \p_t (P'(\rho_e)) \rho_e^\tau + P''(\rho_e) \nabla \rho_e\cdot \rho^\tau u^\tau \,\dx\ds. 
\end{align*}
Since the integral of the right-hand side is finite, we find that $t\mapsto II + III$ is absolutely continuous (note that all other terms in $II+III$ arise from the strong solution).
\end{remark}

%
%
%
%
%
\subsection{Control of critical nonlinear term}
To complete the modulated energy estimate for $\calF(\tilde U^\tau|\tilde U)$, it remains to control the nonlinear coupling term
\begin{equation}\label{bee}
\intt (\rho_e^\tau - \rho_e)(u - u_e)\cdot \nabla(\Phi^\tau-\Phi) \, \dx
\end{equation}
which encodes the interaction between the electrostatic potentials and the difference between the macroscopic velocity $u$ and the auxiliary electron velocity $u_e$. While this term is relatively easier to handle when $\rho_e^\tau \le \rho_e$, the analysis becomes significantly more delicate when $\rho_e^\tau$ dominates. The challenge stems from the nonlinear structure of the integrand and the lack of uniform control in regions of high electron density.

To address this difficulty, we proceed with two observations. First, in view of Remark \ref{rmk:happy}, we regard the velocity difference
\[
\bar{u} := u - u_e \in L^\infty(0,T;C^1(\T^d))
\]
as a regular and controllable quantity. Second, by recalling the identities
\[
\Phi^\tau = \log \rho_e^\tau, \quad \Phi = \log \rho_e,
\]
we reformulate the integrand of \eqref{bee} as
\bq \label{eq:key:hydro}
(\rho_e^\tau - \rho_e)\bar{u} \cdot \nabla(\Phi^\tau - \Phi) 
 = \rho_e \bar{u} \left(\frac{\rho_e^\tau}{\rho_e} - 1\right) \nabla\log\left(\frac{\rho_e^\tau}{\rho_e}\right) = (\rho_e \bar{u}) \cdot \nabla \left( \frac{\rho_e^\tau}{\rho_e} - 1 - \log\left(\frac{\rho_e^\tau}{\rho_e}\right) \right),
\eq
where we used the identity
\[
(f - 1)\nabla \log f = \nabla (f - 1 - \log f).
\]

This reformulation allows us to integrate by parts in \eqref{eq:key:hydro}, placing the derivative on the smoother term $\rho_e \bar{u}$:
\[
- \nabla \cdot (\rho_e \bar{u}) \left( \frac{\rho_e^\tau}{\rho_e} - 1 - \log\left( \frac{\rho_e^\tau}{\rho_e} \right) \right).
\]
Since $\nabla \cdot (\rho_e \bar{u})$ is uniformly bounded in $L^\infty$, it suffices to control the expression
\[
\frac{\rho_e^\tau}{\rho_e} - 1 - \log\left( \frac{\rho_e^\tau}{\rho_e} \right),
\]
which is known to behave quadratically near $\rho_e^\tau = \rho_e$, and more importantly, remains positive and coercive when $\rho_e^\tau > \rho_e$.

This motivates a decomposition of the domain into regions where either $\rho_e^\tau$ or $\rho_e$ is larger, and a corresponding localization of estimates. However, due to the integration by parts, it is necessary to implement a smooth separation of regions. These ideas are encapsulated in the following key lemma, which is stated in a stationary setting, though it naturally extends to the time-dependent case.

\begin{lemma} \label{lem:key}
Suppose that $\bar{u} \in C^1(\T^d)$, $\Phi^\tau \in H^1(\T^d)$, and $\Phi \in C^1(\T^d)$ satisfy
\[
-\Delta \Phi^\tau = \rho - e^{\Phi^\tau}, \quad -\Delta \Phi = \rho - e^{\Phi}.
\]
Then, the following estimate holds:
\[
\left| \intt (\rho_e^\tau - \rho_e) \bar{u} \cdot \nabla(\Phi^\tau - \Phi) \, \dx \right| 
\le C \intt P(\rho_e^\tau | \rho_e) + |\nabla(\Phi^\tau - \Phi)|^2 \, \dx,
\]
where $C = C(\|\Phi\|_{C^1}, \|\bar{u}\|_{C^1}) > 0$.
\end{lemma}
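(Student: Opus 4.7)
The plan is to implement the cutoff strategy sketched above in terms of the ratio $h := \rho_e^\tau/\rho_e$ and the convex function $G(h) := h - 1 - \log h \ge 0$, which vanishes quadratically at $h = 1$. Since $\Phi^\tau - \Phi = \log h$ and $(h-1)\nabla\log h = \nabla G(h)$, the integrand rewrites as $\rho_e\bar{u}\cdot \nabla G(h)$. The regularity hypotheses on $\Phi$ and $\bar{u}$ ensure that $\rho_e = e^\Phi$, $\bar{u}$, and $\nabla\cdot(\rho_e\bar{u})$ all lie in $L^\infty(\T^d)$, and $\rho_e \ge e^{-\|\Phi\|_\infty} > 0$; these are the only structural inputs used on the ``good'' factors.

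I would fix a smooth cutoff $\chi\colon[0,\infty)\to [0,1]$ with $\chi\equiv 0$ on $[0,1/2]$ and $\chi\equiv 1$ on $[2,\infty)$, and split
\[
\intt (\rho_e^\tau - \rho_e)\bar{u}\cdot \nabla(\Phi^\tau-\Phi)\,\dx = I_- + I_+,
\]
according to the weights $(1-\chi(h))$ and $\chi(h)$. On the support of $I_-$, one has $h\le 2$, hence $\rho_e^\tau \le 2\|\rho_e\|_\infty$ and the exponential nonlinearity is harmless. A short verification that $(h-1)^2 \le C(h\log h - h + 1)$ on $[0,2]$ (both sides vanish to second order at $h=1$ with the same Taylor coefficient, and the ratio remains bounded at $h=0$) yields the pointwise bound $(\rho_e^\tau - \rho_e)^2 \le C\, P(\rho_e^\tau|\rho_e)$, after which a direct Cauchy--Schwarz closes $|I_-| \le C\intt (P(\rho_e^\tau|\rho_e) + |\nabla(\Phi^\tau-\Phi)|^2)\,\dx$.

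For $I_+$, I would use $\chi(h)\nabla G(h) = \nabla(\chi(h) G(h)) - \chi'(h) G(h)\nabla h$ together with $\nabla h = h\nabla(\Phi^\tau-\Phi)$, and integrate the first piece by parts to move the gradient onto the regular factor $\rho_e\bar{u}$, producing
\[
I_+ = -\intt \nabla\cdot(\rho_e\bar{u})\,\chi(h) G(h)\,\dx - \intt \chi'(h) G(h)\,\rho_e^\tau\,\bar{u}\cdot \nabla(\Phi^\tau-\Phi)\,\dx =: I_{+,1} + I_{+,2}.
\]
The bound for $I_{+,1}$ then reduces to the pointwise estimate $\rho_e G(h)\le C\,P(\rho_e^\tau|\rho_e)$ on $\{h\ge 1/2\}$, which in turn reduces to boundedness of the scalar ratio $G(h)/(h\log h - h + 1)$ on $[1/2,\infty)$ --- immediate since both terms vanish to second order at $h=1$ with matching Taylor coefficient and the ratio decays like $1/\log h$ as $h\to\infty$.

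The delicate piece is $I_{+,2}$, which is the unavoidable byproduct of the cutoff and where I expect the main obstacle to lie: it is supported on the transition set $\{1/2\le h\le 2\}$, where $\rho_e^\tau$ and $\chi'(h) G(h)$ are only bounded, so a naive Cauchy--Schwarz would leave a spurious additive constant of order $C|\T^d|$ on the right-hand side. The key observation that removes this defect is that $G(h)$ itself vanishes like $(h-1)^2$ at $h=1$, so the stronger ratio $G(h)^2/(h\log h - h + 1)$ is bounded on $[1/2,2]$; combined with the two-sided bounds on $\rho_e$ and the inequality $\rho_e^\tau \le 2\|\rho_e\|_\infty$ on this set, this upgrades to the pointwise estimate $\bigl(\chi'(h) G(h)\rho_e^\tau\bigr)^2 \le C\,P(\rho_e^\tau|\rho_e)$. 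A weighted Cauchy--Schwarz then delivers $|I_{+,2}| \le C\intt (P(\rho_e^\tau|\rho_e) + |\nabla(\Phi^\tau-\Phi)|^2)\,\dx$, and summing the three contributions completes the proof. Exploiting this \emph{double} quadratic vanishing of $G^2$ at $h=1$, rather than the integration by parts itself, is the crucial structural step that converts mere pointwise boundedness of the boundary term into a genuine relative entropy bound.
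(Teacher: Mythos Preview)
Your proof is correct and takes a genuinely different route from the paper's. Both arguments use the same reformulation $(\rho_e^\tau-\rho_e)\nabla(\Phi^\tau-\Phi)=\rho_e\nabla G(h)$ and split via a cutoff in $h$, but the paper employs a $\delta$-dependent cutoff $\chi_\delta$ supported on $[1,1+\delta]$ with $|\chi_\delta'|\le C/\delta$, and handles the analogue of your $I_{+,2}$ by observing that $G(h)\le C\delta^2$ on $\mathrm{supp}\,\chi_\delta'$, so the product $\chi_\delta' G(h)$ is $O(\delta)$; this yields a spurious additive $C\delta^2$ in the estimate, which is then removed by sending $\delta\to 0$. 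Your approach instead keeps a fixed cutoff with transition on $[1/2,2]$ and eliminates the need for a limiting procedure by the sharper pointwise observation $G(h)^2\le CP(h|1)$ on that set, exploiting the fourth-order vanishing of $G^2$ at $h=1$. This is arguably cleaner, avoiding both the $\delta$-bookkeeping and the final limit; the paper's version, on the other hand, requires only the weaker comparison $G(h)\le CP(h|1)$ and trades the structural insight about $G^2$ for a routine approximation argument.
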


\begin{proof}
To estimate the left-hand side of the inequality, we follow the strategy outlined in the heuristic discussion above. The main idea is to reformulate the integrand using the identity
\[
(\rho_e^\tau - \rho_e) \nabla (\Phi^\tau - \Phi) = \rho_e \nabla \left( \frac{\rho_e^\tau}{\rho_e} - 1 - \log\left( \frac{\rho_e^\tau}{\rho_e} \right) \right),
\]
and apply integration by parts to shift the derivative onto the smooth coefficient $\rho_e \bar{u}$. However, since this manipulation introduces a nonlinear term involving $\rho_e^\tau / \rho_e$, we must carefully handle the resulting expression when this ratio is large.

To localize the analysis, we introduce a smooth cutoff function $\chi_\delta : \mathbb{R} \to [0,1]$ for $\delta > 0$ small, satisfying
\[
\chi_\delta(a) = 
\begin{cases}
0 &\text{if } a \le 1, \\
1 &\text{if } a \ge 1+\delta
\end{cases}
\quad
\text{and} \quad |\chi_\delta'(a)| \le \frac{C}{\delta}.
\]
This function approximates the indicator of the set where $a>1$, i.e., 
\[
\chi_\delta(a) \to \textbf{1}_{a>1} \quad \text{as} \quad \delta \to 0.
\]
We split the domain using $\chi_\delta(\rho_e^\tau / \rho_e)$ and estimate the term via integration by parts. We define
\[
K_\delta^1 := \intt \chi_\delta\left( \frac{\rho_e^\tau}{\rho_e} \right) \rho_e \bar{u} \cdot \nabla\left( \frac{\rho_e^\tau}{\rho_e} - 1 - \log\left( \frac{\rho_e^\tau}{\rho_e} \right) \right) \, \dx,
\]
and claim that
\bq\label{claim: Idelta1}
|K_\delta^1| \le C \delta^2 + C \intt P(\rho_e^\tau \,|\, \rho_e) + |\nabla(\Phi^\tau - \Phi)|^2 \, \dx
\eq
for some $C>0$ independent of $\delta>0$.

To see this, we decompose $K_\delta^1$ as follows:
\[
\begin{aligned}
K_\delta^1 &= -\intt \chi_\delta'\left( \frac{\rho_e^\tau}{\rho_e} \right) \nabla\left( \frac{\rho_e^\tau}{\rho_e} \right) \cdot \rho_e \bar{u} \left( \frac{\rho_e^\tau}{\rho_e} - 1 - \log\left( \frac{\rho_e^\tau}{\rho_e} \right) \right) \dx \cr
&\quad - \intt \chi_\delta\left( \frac{\rho_e^\tau}{\rho_e} \right) \nabla \cdot (\rho_e \bar{u}) \left( \frac{\rho_e^\tau}{\rho_e} - 1 - \log\left( \frac{\rho_e^\tau}{\rho_e} \right) \right)  \dx \\
&=: K_\delta^{1,1} + K_\delta^{1,2}.
\end{aligned}
\]

We estimate $K_\delta^{1,1}$ first. The support of $\chi_\delta'$ is contained in the region
\[
\Omega_\delta := \left\{ x \in \T^d : 1 \le \frac{\rho_e^\tau(x)}{\rho_e(x)} \le 1 + \delta \right\}.
\]
Using the uniform bounds on $\bar{u}$ and the identity
\[
\nabla \left( \frac{\rho_e^\tau}{\rho_e} \right) = \left( \frac{\rho_e^\tau}{\rho_e} \right) \nabla(\Phi^\tau - \Phi),
\]
we obtain
\begin{align*}
|K_\delta^{1,1}| &\le \left\| \chi_\delta' \right\|_{L^\infty} \left\| \rho_e \bar{u} \right\|_{L^\infty} \int_{\Omega_\delta} \left| \frac{\rho_e^\tau}{\rho_e} - 1 - \log\left( \frac{\rho_e^\tau}{\rho_e} \right) \right| \left| \nabla(\Phi^\tau - \Phi) \right|  \dx \\
&\le C \delta \left\| \bar{u} \right\|_{L^\infty} \int_{\Omega_\delta} |\nabla(\Phi^\tau - \Phi)| \, \dx \\
&\le C \delta^2 + C \intt |\nabla(\Phi^\tau - \Phi)|^2 \, \dx,
\end{align*}
where we used the bound $|\rho_e^\tau/\rho_e - 1 - \log(\rho_e^\tau/\rho_e)| \le C \delta^2$ on $\Omega_\delta$.

For the second term $K_\delta^{1,2}$, note that $\rho_e^\tau \ge \rho_e$ on the support of $\chi_\delta(\rho_e^\tau /\rho_e)$. We recall that for any $x \ge 1$, the function
\[
P(x| 1) = x \log x - (x - 1) \ge c \left( x - 1 - \log x \right),
\]
for some constant $c>0$. Indeed, the function $l(x) = (x-1-\log x)/(x\log x - (x-1))$ is continuous on $(1,\infty)$ and satisfies $\lim_{x\downarrow 1} l(x) = 1$ and $\lim_{x\uparrow\infty}l(x) = 0$.  Hence, using $x = \rho_e^\tau / \rho_e$, we deduce 
\[
|K_\delta^{1,2}| \le \frac{C}{\inf \rho_e}\|\nabla(\rho_e \tilde{u})\|_{L^\infty} \intt P(\rho_e^\tau|\rho_e) \,\dx \le C e^{\|\Phi\|_{L^\infty}} \intt P(\rho_e^\tau|\rho_e) \,\dx,
\]
where we have used $\rho_e = e^{\Phi} \ge e^{-\|\Phi\|_{L^\infty}} > 0$.
This proves the claim \eqref{claim: Idelta1}.

We now turn to the remaining contribution:
\[
\begin{split}
K^2_\delta: = &\intt \lt(1-\chi_\delta\left( \frac{\rho_e^\tau}{\rho_e} \right)\rt) (\rho_e^\tau - \rho_e)\bar{u}\cdot \nabla(\Phi^\tau-\Phi) \,\dx \\ 
\le &\|(\rho_e^\tau-\rho_e)\textbf{1}_{\rho_e^\tau \le (1+\delta)\rho_e}\|_{L^2}\|\bar{u}\|_{L^\infty}\|\nabla(\Phi^\tau -\Phi)\|_{L^2} \\
\le  &\lt(\|(\rho_e^\tau -\rho_e)\textbf{1}_{\rho_e^\tau \le (1+\delta)\rho_e}\|_{L^2}^2 + \|\nabla(\Phi^\tau -\Phi)\|_{L^2}^2 \rt) \|\bar{u}\|_{L^\infty}\\
\le &\lt(2(1+\delta)e^{\|\Phi\|_{\infty}}\intt P(\rho_e^\tau \,|\, \rho_e) + |\nabla(\Phi^\tau - \Phi)|^2 \, \dx\rt)\|\bar{u}\|_{L^\infty},
\end{split}
\]
where we used
\[
P(\rho_e^\tau|\rho_e) \ge \frac{1}{2(1+\delta)\rho_e}(\rho_e^\tau -\rho_e)^2 \ge  \frac{1}{2(1+\delta)}e^{-\|\Phi\|_{\infty}}(\rho_e^\tau-\rho_e)^2\quad\mbox{for }  (1+\delta)\rho_e \ge \rho_e^\tau. 
\]
Hence, for all small $\delta>0$, we have
\[
|K_\delta^{2}| \le 2 (1+\delta) \|\tilde{u}\|_{L^\infty}e^{\|\Phi\|_{L^\infty}} \intt P(\rho_e^\tau |\rho_e) + |\nabla(\Phi^\tau  -\Phi)|^2 \, \dx.
\]
Combining the bounds on $K_\delta^1$ and $K_\delta^2$ and letting $\delta \to 0$ yields the desired estimate.
\end{proof}

%
%
%
%
%
\subsection{Final convergence estimate and proof of Theorem \ref{thm:hdr}}

We now complete the proof of the convergence estimate \eqref{eq: conv mod egy} for the modulated energy functional $\calF(\tilde U^\tau|\tilde U)$, by carefully combining all the ingredients developed in the preceding subsections. The key idea is to control each term in the evolution identity derived in Lemma \ref{lem:fml}, using the uniform bounds and structural properties of the system.

\begin{lemma} \label{lem:eps}
Under the hypotheses {\bf (H1)}--{\bf (H2)} in Theorem \ref{thm:hdr}, we have
\[
\calF(\tilde U^\tau|\tilde U)(t) \le C \sqrt{\tau}, \quad \forall \, t \in [0,T],
\]
where $C>0$ depends on $T$ and $ \|(u,u_e, \Phi)\|_{L^\infty(0,T;C^1(\T^d))}$. 
\end{lemma}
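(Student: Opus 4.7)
The plan is to take the evolution identity derived in Lemma \ref{lem:fml} as the starting point, bound each of its five summands either by $C\sqrt{\tau}$ or by a multiple of $\int_0^t \calF(\tilde U^\tau|\tilde U)(s)\,\ds$, and then close the loop with Gronwall's inequality.

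The initial contribution $\calF(\tilde U_0^\tau|\tilde U_0)$ is directly $O(\sqrt{\tau})$ by \textbf{(H1)}. For the energy-drift term $\calF(\tilde U^\tau)(t) - \calF(\tilde U_0^\tau)$, I would invoke the pointwise minimization principle \eqref{eq: min prin}, namely $\eta(U^\tau)\le \intr H[f^\tau]\,\dv$, together with the observation that the potential contributions $\tfrac12|\nabla\Phi^\tau|^2 + e^{\Phi^\tau}(\Phi^\tau - 1) + 1$ appear identically in both $\calF(\tilde U^\tau)$ and $\calE[f^\tau]$. Combined with the kinetic entropy inequality \eqref{eq : KEI}, this yields $\calF(\tilde U^\tau)(t) \le \calE[f^\tau](t) \le \calE[f_0^\tau]$, so that $\calF(\tilde U^\tau)(t) - \calF(\tilde U_0^\tau) \le \iinttr H[f_0^\tau]\,\dv\dx - \intt \eta(U_0^\tau)\,\dx \le C\sqrt{\tau}$ by \textbf{(H2)}.

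The kinetic flux term $I_1 = \int_0^t\intt \nabla u : \intr v \otimes\{(u^\tau - v)f^\tau - \kappa\nabla_v f^\tau\}\,\dv\dx\ds$ is the most delicate to convert into an $O(\sqrt{\tau})$ bound. The key algebraic step is that mass and momentum conservation \eqref{cmm} together with the decay of $f^\tau$ in $v$ force $\intr [(u^\tau - v)f^\tau - \kappa\nabla_v f^\tau]\,\dv = 0$, so $v$ can be replaced by $v - u^\tau$ in the outer tensor factor without changing the integral. Pointwise Cauchy--Schwarz in $v$ then gives
\[
\left|\intr (v-u^\tau)\otimes\{(u^\tau-v)f^\tau - \kappa\nabla_v f^\tau\}\,\dv\right| \le \left(\intr |v-u^\tau|^2 f^\tau\,\dv\right)^{1/2}\left(\intr \calD[f^\tau]\,\dv\right)^{1/2}.
\]
After pairing with $\nabla u \in L^\infty$ and applying Cauchy--Schwarz once more in $(s,x)$, the moment factor is uniformly bounded by \eqref{eq: moments tau}, while the dissipation factor is $O(\sqrt{\tau})$ by \eqref{small:diss}, yielding $|I_1|\le C\sqrt{\tau}$.

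The remaining terms $I_2, I_3, I_4$ are all quadratic in the relevant differences and feed directly into the Gronwall loop. For $I_2$, bounding $|I_2|\le \|\nabla u\|_{L^\infty}\int_0^t\intt \rho^\tau|u^\tau - u|^2\,\dx\ds$ and recognizing the integrand as twice the kinetic part of $\calF(\tilde U^\tau|\tilde U)$ gives the desired control. For $I_3$, setting $\psi := \Phi^\tau - \Phi$ and integrating by parts in $x$ using $\partial_j(u_i\partial_i\psi) = (\partial_j u_i)(\partial_i\psi) + u_i\partial_i\partial_j\psi$ rewrites the integrand as a quadratic form in $\nabla\psi$ with coefficients in $\nabla u$ and $\nabla\cdot u$, yielding $|I_3|\le C\|u\|_{W^{1,\infty}}\int_0^t \intt |\nabla\psi|^2\,\dx\ds$. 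The critical term $I_4$ is exactly what Lemma \ref{lem:key} is designed for: applying it slice-by-slice in $s$ with $\bar u = u - u_e$, which lies in $L^\infty(0,T; C^1(\T^d))$ by Proposition \ref{lem:u_e} and Remark \ref{rmk:happy}, gives $|I_4|\le C\int_0^t\intt P(\rho_e^\tau|\rho_e) + |\nabla\psi|^2\,\dx\ds \le C\int_0^t \calF(\tilde U^\tau|\tilde U)(s)\,\ds$. Assembling everything produces the integral inequality $\calF(\tilde U^\tau|\tilde U)(t) \le C\sqrt{\tau} + C\int_0^t \calF(\tilde U^\tau|\tilde U)(s)\,\ds$, and Gronwall's inequality yields the conclusion. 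The genuine obstacle, namely the exponential nonlinearity hidden in $I_4$, has already been dispatched in Lemma \ref{lem:key}; what remains in this lemma is essentially careful bookkeeping between the entropy, moment, and dissipation estimates.
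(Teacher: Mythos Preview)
Your proof follows essentially the same route as the paper's. The one point you skip is verifying that $\calE[f_0^\tau]$ is bounded \emph{uniformly in $\tau$}: both \eqref{eq: moments tau} and \eqref{small:diss} have $\calE[f_0^\tau]$ on their right-hand sides, so without this your bound on $I_1$ would carry a hidden $\tau$-dependence through the constant. The paper closes this gap by first using \textbf{(H2)} to get $\calE[f_0^\tau] \le \calF(\tilde U_0^\tau) + C\sqrt{\tau}$, then expanding $\calF(\tilde U_0^\tau) = \calF(\tilde U_0^\tau|\tilde U_0) + \calF(\tilde U_0) + \intt \textnormal{d}F(\tilde U_0)\cdot(\tilde U_0^\tau - \tilde U_0)\,\dx$ and controlling the linear cross term via $\|\tilde U_0^\tau - \tilde U_0\|_{L^1} \le C\,\calF(\tilde U_0^\tau|\tilde U_0)^{1/2}$ (Lemmas \ref{lem:rho}--\ref{lem:rhou}), so that \textbf{(H1)} yields $\calE[f_0^\tau] \le C$ uniformly.
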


\begin{proof}
As established in Lemma \ref{lem:fml}, the time evolution of the modulated energy satisfies
\bq\label{eq:fml}
\begin{split}
\calF(\tilde U^\tau|\tilde U) &=  \calF(\tilde U^\tau_0|\tilde U_0)  + \Big(\calF(\tilde U^\tau) - \calF(\tilde U^\tau_0) \Big)  +\int_0^t \intt \nabla u : \intr v\otimes \{(u^\tau - v)f^\tau - \kappa \nabla_v f^\tau\} \, \dv \dx \ds\\
&\quad - \int_0^t\intt \nabla u : \rho^\tau(u^\tau -u)\otimes (u^\tau -u) \,\dx \ds  +\int_0^t\intt \{-\Delta (\Phi^\tau - \Phi) u\} \cdot \nabla (\Phi^\tau - \Phi) \,\dx\ds \\
&\quad +\int_0^t\intt (\rho_e^\tau - \rho_e)(u - u_e)\cdot \nabla(\Phi^\tau -\Phi) \,\dx \ds \\
&=: \sum_{i=1}^{6} J_i.
\end{split}
\eq
We now estimate each term $J_i$ individually.

\medskip
\noindent
$\bullet$ Estimate of $J_1$: It is clear that $J_1 \le C \sqrt{\tau}$ by the assumption of well-prepared initial data {\bf (H1)}.

\medskip
\noindent
$\bullet$ Estimate of $J_2$: Recalling the definitions of $\calE$ from \eqref{def : energy.} and $\eta$ from \eqref{def: eta}, we observe that
\begin{align*}
    \calF(\tilde U^\tau) &= \calE[f^\tau] + \intt \left(\frac{1}{2}\rho^\tau |u^\tau|^2 + \kappa \rho^\tau \log \rho^\tau - \int_{\R^d} \Big(\frac{|v|^2}{2} f^\tau + \kappa f^\tau \log f^\tau + \kappa\frac{d}{2}\log(2\pi\kappa) f^\tau \Big)\,\dv\right)\dx \\
    &= \calE[f^\tau] + \intt \left( \eta(U^\tau) - \int_{\R^d} H[f^\tau](v)\,\dv \right)\dx .
\end{align*}
Thus, we can decompose $J_2$ and estimate as
\begin{align*}
    \calF(\tilde U^\tau) - \calF(\tilde U^\tau_0) &= \calE[f^\tau] +  \intt \left(\eta(U^\tau) - \int_{\R^d} H[f^\tau](v)\,\dv\right)\,\dx  \cr
    &\quad - \calE[f_0^\tau] - \intt \left(\eta(U^\tau_0) - \int_{\R^d} H[f^\tau_0](v) \,\dv\right)\,\dx \\
    &\le \Big(\calE[f^\tau] - \calE[f_0^\tau] \Big) + \intt \left(\int_{\R^d}H[f_0^\tau]\,\dv - \eta(U^\tau_0)\right) \,\dx  \\
    &\le C\sqrt{\tau},
\end{align*}
where we used \eqref{eq: min prin}, \eqref{eq : KEI}, and \textbf{(H2)}.

\medskip
\noindent
$\bullet$ Estimate of $J_3$: 
We note that
\begin{equation}\label{eq:diss}
\begin{split}
&\lt|\int_{\R^d} v \otimes \lt\{ (u^\tau - v) f^\tau - \kappa\nabla_v f^\tau\rt\} \dv \rt| \le  \lt(\int_{\R^d} |v|^2 f^\tau\, \dv\rt)^\frac{1}{2} \lt(\int_{\R^d} \frac{|\kappa\nabla_v f^\tau + (v - u^\tau)f^\tau|^2}{ f^\tau}\,\dv\rt)^\frac{1}{2}.
\end{split}
\end{equation}
Hence, \eqref{eq: moments tau} and \eqref{small:diss} yield
\begin{equation*}
\begin{split}
J_3 &\le \|\nabla u\|_{L^\infty} \lt(\int_0^t\iinttr |v|^2 f^\tau \, \dv\dx\ds\rt)^\frac{1}{2} \lt(\int_0^t\iinttr \frac{|\kappa\nabla_v f^\tau + (v - u^\tau)f^\tau|^2}{f^\tau} \, \dv\dx\ds\rt)^\frac{1}{2}\\
&\le C\|\nabla u\|_{L^\infty} \sqrt{T}(\calE[f_0^\tau] + C_{\kappa,d})\sqrt{\tau}.
\end{split}
\end{equation*}
It remains to justify that the initial kinetic energy $\calE[f_0^\tau]$ is uniformly bounded with respect to $\tau$. We first observe
\begin{align*}
    \calE[f_0^\tau] &= \iinttr H_\kappa[f_0^\tau]\,\dx\dv + \intt \frac{1}{2}|\nabla \Phi^\tau_0|^2 + e^{\Phi^\tau_0}(\Phi^\tau_0-1)+1\,\dx \cr
    &\le \intt \eta(U^\tau_0)\,\dx  +C\sqrt{\tau}+ \intt \frac{1}{2}|\nabla \Phi^\tau_0|^2 + e^{\Phi^\tau_0}(\Phi^\tau_0-1)+1\,\dx  \cr
    &= \calF(\tilde U^\tau_0) + C\sqrt{\tau},
\end{align*}
due to \textbf{(H2)}. We then notice that
\[
\calF(\tilde U^\tau_0) = \calF(\tilde U^\tau_0| \tilde U_0) + \calF(\tilde U_0) +\intt {\rm d}F(U_0)(\tilde U^\tau_0 - \tilde U_0).
\]
Since $U_0$ is smooth and fixed, the derivative ${\rm d}F(U_0)$ is bounded in $L^\infty$. Furthermore, using Lemmas \ref{lem:rho} and \ref{lem:rhou}, we estimate
\[
\|\tilde U^\tau_0 - \tilde U_0\|_{L^1} \le C  \calF(\tilde U^\tau_0| \tilde U_0)^{1/2}.
\]
Combining this with assumption {\bf (H1)}, which ensures $\calF(\tilde U_0^\tau|\tilde U_0) \le C\tau$, we conclude that
\[
\calE[f_0^\tau] \leq C 
\]
uniformly in $\tau > 0$, as claimed.

 \medskip
\noindent
$\bullet$ Estimate of $J_4$: It is straightforward that
\[
J_4 \le \|\nabla u\|_{L^{\infty}} \int_0^t \intt \eta(U^\tau|U) \,\dx\ds.
\]
$\bullet$ Estimate of $J_5$: As shown in \cite{kang2018}, integrating by parts yields 
\[
J_5 \le \frac{3}{2}T \|\nabla u\|_{\infty} \|\nabla (\Phi^\tau - \Phi)\|_2^2.
\]
$\bullet$ Estimate of $J_6$: By using Lemma \ref{lem:key} with $\bar{u}:=u-u_e$, we obtain the key bound
\[
J_6\le C \int_0^t \calF(U^\tau|U)\,\ds.
\]

Combining all, we arrive at
\[
\calF(\tilde U^\tau | \tilde U) \le C\sqrt{\tau} + C \int_0^t \calF(\tilde U^\tau | \tilde U) \,\ds, \quad \forall \,t\in[0,T].
\]
Finally, applying Gr\"onwall's inequality completes the proof.
\end{proof}

The modulated energy estimate established in Lemma \ref{lem:eps} provides a direct quantitative control of the difference between the relaxation solution $(\tilde U^\tau,f^\tau)$ and its hydrodynamic limit $(\tilde U,f)$. In particular, it immediately implies the convergence rates stated in \eqref{eq: tauconv l2}--\eqref{eq: conv of dist g1 k=0}, covering both the strong convergence of macroscopic quantities and the Wasserstein-type convergence of the kinetic distribution. The derivation of these statements follows from standard manipulations, where the bounds on the modulated energy are translated into norms measuring the discrepancy between the two solutions. Since these arguments are by now classical and involve no additional difficulties beyond those already addressed, we defer the detailed completion of the proof of Theorem \ref{thm:hdr} to Appendix \ref{app: QE}.

%
%
%
%
%

\section{Global existence of solutions to ionic VPFP system} \label{sec : weak solutions fokker planck}
 In this section, we prove Theorem \ref{thm:gws}, establishing the global-in-time existence of weak solutions to \eqref{main eq} for the ionic VPFP system. Our analysis proceeds through a carefully designed approximation scheme incorporating velocity regularization and mollification in both space and velocity variables. Uniform-in-time bounds are obtained from the kinetic entropy-dissipation structure, complemented by moment estimates that control the spatial spreading of particles and ensure tightness. Compactness arguments then allow us to pass to the limit in the nonlinear Poisson coupling, and the limiting solution is shown to satisfy the kinetic entropy inequality, preserving the fundamental entropy-dissipation structure of the original VPFP system.

%
%
%
%
%

\subsection{Preliminaries for the Poisson--Boltzmann equation}

Before constructing global solutions, we recall several key results on the Poisson--Boltzmann equation that will be essential in our analysis--particularly in controlling the electric potential and establishing stability estimates. These results, taken from \cite{choikoosong2025}, provide existence, uniqueness, and convergence properties for weak solutions to the Poisson--Boltzmann equation.
\begin{proposition}[Theorem 1.1, \cite{choikoosong2025}]
\label{thm : existence and uniqueness}
Let $d\ge 2$ and $p>1$. For each $\rho \in L^p_+(\T^d)$ with $\rho\not\equiv 0$, there exists a unique weak solution $\Phi[\rho]\in H^1(\T^d)$ to the Poisson--Boltzmann equation
\begin{align}\label{eq:pb}
    -\Delta \Phi[\rho] = \rho - e^{\Phi[\rho]}
\end{align}
which verifies $\|e^{\Phi[\rho]}\|_{L^r(\T^d)}\le \|\rho\|_{L^r(\T^d)}$ for all $r\in [1,p]$. Furthermore, equality holds when $r=1$:
\begin{align*}
    \intt e^{\Phi[\rho]}\,\dx = \intt \rho\,\dx.
\end{align*}
In particular, $\Phi[\rho]\in L^q(\T^d)$ for all $q\in [1,\infty)$.
\end{proposition}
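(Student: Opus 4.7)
My approach to Proposition~\ref{thm : existence and uniqueness} is variational. I would introduce the strictly convex energy functional
\begin{equation*}
J[\Phi] := \intt \frac{|\nabla\Phi|^2}{2} + e^\Phi - \rho\,\Phi \,\dx
\end{equation*}
on the admissible class $\calA := \{\Phi \in H^1(\T^d) : e^\Phi \in L^1(\T^d)\}$, noting that its Euler--Lagrange equation is precisely \eqref{eq:pb}. Strict convexity in $(\nabla\Phi,\Phi)$ immediately yields uniqueness of a minimizer, and hence of a weak solution. For existence via the direct method, I split $\Phi = \bar\Phi + \tilde\Phi$ with $\bar\Phi := |\T^d|^{-1}\intt \Phi\,\dx$ and $\tilde\Phi$ of zero mean. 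Coercivity then follows from three ingredients: the Poincar\'e inequality controls $\|\tilde\Phi\|_{H^1}$ by $\|\nabla\Phi\|_{L^2}$; Jensen's inequality gives $\intt e^\Phi\,\dx \ge |\T^d|\,e^{\bar\Phi}$, dominating $J$ as $\bar\Phi \to +\infty$; and the hypothesis $\rho \ge 0$, $\rho \not\equiv 0$ forces $\intt \rho\,\dx > 0$, so that the linear term $-\bar\Phi\intt\rho\,\dx$ dominates as $\bar\Phi \to -\infty$. Weak lower semi-continuity of $J$ (convexity of the integrands together with Fatou's lemma applied to $e^\Phi$) then produces the minimizer.

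For the quantitative $L^r$ bound, I would use the monotone test function technique. For $r \in (1,p]$, I test \eqref{eq:pb} against the nondecreasing bounded Lipschitz function $g_n(\Phi) := \min(e^{(r-1)\Phi}, n)$. Since $g_n' \ge 0$, integration by parts yields
\begin{equation*}
\intt e^\Phi g_n(\Phi)\,\dx \le \intt \rho\, g_n(\Phi)\,\dx.
\end{equation*}
Passing $n \to \infty$ via monotone convergence and applying H\"older's inequality gives
\begin{equation*}
\intt e^{r\Phi}\,\dx \le \|\rho\|_{L^r}\lt(\intt e^{r\Phi}\,\dx\rt)^{(r-1)/r},
\end{equation*}
which simplifies to $\|e^\Phi\|_{L^r} \le \|\rho\|_{L^r}$. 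The equality at $r=1$ is immediate from integrating \eqref{eq:pb} over $\T^d$. The stated $L^q$ integrability of $\Phi$ for every $q < \infty$ then follows by combining the exponential control of $\Phi_+$ (from $\intt e^{r\Phi}\,\dx < \infty$, one has $|\Phi_+|^q \lesssim_{q,r} e^{r\Phi}$) with standard elliptic regularity and Sobolev embedding applied to $-\Delta\Phi = \rho - e^\Phi \in L^p(\T^d)$.

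The main obstacle I anticipate lies in verifying admissibility: since $H^1(\T^d)$ does not embed in $L^\infty$ for $d \ge 2$, membership $e^\Phi \in L^1(\T^d)$ is not automatic for $\Phi \in H^1(\T^d)$, which complicates both the direct variational argument and the rigorous use of $g_n(\Phi)$ as an admissible test function in the derivation of the $L^r$ bound. A clean remedy is approximation: mollify $\rho$ to obtain $\rho_n \in L^\infty \cap L^p_+(\T^d)$, for which the functional $J_n$ admits a bounded minimizer $\Phi_n \in L^\infty(\T^d)$ via elliptic regularity (as $\rho_n - e^{\Phi_n} \in L^\infty$). The $L^r$ estimates derived above are then uniform in $n$, and Fatou's lemma together with weak $H^1$ compactness permits passage to the limit, furnishing the desired solution $\Phi[\rho]$ for general $\rho \in L^p_+(\T^d)$ with all stated properties preserved.
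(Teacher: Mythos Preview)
The paper does not supply its own proof of this proposition; it is quoted as a preliminary result from \cite{choikoosong2025}, so there is no in-paper argument to compare against. Your variational outline is the standard route and is essentially correct: $J$ is strictly convex, coercive along both the mean and mean-free components of $\Phi$ for the reasons you state, and the Kato-type test with $g_n(\Phi)=\min(e^{(r-1)\Phi},n)$ cleanly yields $\|e^\Phi\|_{L^r}\le\|\rho\|_{L^r}$. The approximation by $\rho_n\in L^\infty$ is also the right way to resolve the admissibility issue you flag, since for $\rho_n\in L^\infty$ the test-function argument applies for every finite $r$, forcing $e^{\Phi_n}\in\bigcap_{r<\infty}L^r$ and hence $\Phi_n\in W^{2,r}$ for all $r$ by bootstrapping.

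There is one soft spot. Your claim that $\Phi\in L^q$ for all $q<\infty$ handles $\Phi_+$ correctly via the exponential moment, but for $\Phi_-$ you appeal to ``elliptic regularity and Sobolev embedding applied to $-\Delta\Phi=\rho-e^\Phi\in L^p$.'' When $1<p<d/2$ this only gives $\Phi\in W^{2,p}\hookrightarrow L^{dp/(d-2p)}$, not every $L^q$, and there is no evident bootstrap since the integrability of $e^\Phi$ is capped at $L^p$. The clean fix is a pointwise lower bound $\Phi\ge\log C[\rho]$ for some constant $C[\rho]>0$ depending on $\|\rho\|_{L^1}$ and $\|\rho\|_{L^p}$; the present paper in fact invokes exactly this bound (citing \cite[Lemma~2.15]{choikoosong2025}) in the proof of Lemma~\ref{lem: p_t phi regularity}. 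With $\Phi_-\in L^\infty$ in hand, the conclusion $\Phi\in L^q$ for all $q$ is immediate.
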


\begin{lemma}[Corollary 2.10, \cite{choikoosong2025}] \label{lem: poi est}
    Let $d\ge 2$ with $p>1$. Suppose $\rho\in L^p_+(\T^d)$ satisfies the bounds
    \begin{align*}
        0 < m \le \|\rho\|_{L^1} \le \|\rho\|_{L^p} \le M < +\infty.
    \end{align*}
    Then the solution $\Phi$ to the Poisson--Boltzmann equation \eqref{eq:pb} satisfies
    \begin{align*}
        \|\Phi[\rho]\|_{H^1(\T^d)} \le C(d,m,M,p)
    \end{align*}
    for some constant $C(d,m,M,p)>0$.
\end{lemma}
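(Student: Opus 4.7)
The strategy exploits the two outputs of Proposition~\ref{thm : existence and uniqueness}: the bound $\|e^{\Phi[\rho]}\|_{L^p} \le \|\rho\|_{L^p} \le M$ and the mass identity $\intt e^{\Phi[\rho]}\dx = \intt \rho\dx \in [m,M]$. The plan is to first control $\|\nabla \Phi\|_{L^2}$ by an energy-type test, then bound the mean $\bar\Phi := |\T^d|^{-1}\intt \Phi \dx$ using the mass identity, and combine these via Poincar\'e's inequality to conclude.

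For the gradient bound, I would test the Poisson--Boltzmann equation~\eqref{eq:pb} against the mean-zero function $\Phi - \bar\Phi$. Using the key cancellation $\intt (\rho - e^\Phi)\dx = 0$ from Proposition~\ref{thm : existence and uniqueness}, integration by parts yields
\[
\intt |\nabla \Phi|^2 \dx = \intt (\rho - e^\Phi)(\Phi - \bar\Phi)\dx \le (\|\rho\|_{L^p} + \|e^\Phi\|_{L^p})\,\|\Phi - \bar\Phi\|_{L^{p'}} \le 2M\,\|\Phi - \bar\Phi\|_{L^{p'}},
\]
with $p' = p/(p-1)$. When $p' \le 2d/(d-2)$ (automatic when $d=2$), Sobolev--Poincar\'e gives $\|\Phi - \bar\Phi\|_{L^{p'}} \le C\|\nabla \Phi\|_{L^2}$ and closes the estimate to $\|\nabla \Phi\|_{L^2} \le C(d,p,M)$. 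In the remaining low-$p$ range in dimensions $d\ge 3$, the Calder\'on--Zygmund bound $\|D^2 \Phi\|_{L^p} \le C\|\rho - e^\Phi\|_{L^p} \le 2CM$ together with iterated Sobolev embedding bootstraps the integrability of $\Phi$ in finitely many steps until the energy argument closes.

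For the mean, Jensen's inequality applied to the identity $\intt e^\Phi\dx \le M$ gives $\bar\Phi \le \log(M/|\T^d|)$ immediately. The matching lower bound uses the factorization $m \le \intt e^\Phi\dx = e^{\bar\Phi}\intt e^{\Phi - \bar\Phi}\dx$, where the remaining integral is controlled by a Trudinger--Moser-type estimate (or Sobolev embedding combined with a Taylor expansion $e^x \le 1 + x + \frac{x^2}{2}e^{|x|}$) using the gradient bound $\|\nabla \Phi\|_{L^2} \le C$ established in the previous step. This yields $\bar\Phi \ge -C(d,m,M,p)$, and Poincar\'e's inequality then combines the mean and gradient bounds into the full $H^1$ estimate. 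The main technical subtlety lies in the low-$p$ regime in dimensions $d\ge 3$: without the a priori $L^p$ control on $e^\Phi$ provided by Proposition~\ref{thm : existence and uniqueness}, the right-hand side $\rho - e^\Phi$ of \eqref{eq:pb} would lack integrability and the Calder\'on--Zygmund bootstrap could not even be initiated.
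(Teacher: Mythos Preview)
The paper does not prove this lemma; it is quoted verbatim from \cite{choikoosong2025} (their Corollary~2.10), so there is no in-paper argument to compare against. Evaluating your proposal on its own merits: the overall architecture (energy test for $\nabla\Phi$, then separate control of $\bar\Phi$) is correct, and your argument is complete for $d=2$ and for $d\ge 3$ with $p\ge \frac{2d}{d+2}$.

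There is, however, a genuine gap in the low-$p$ regime $1<p<\frac{2d}{d+2}$ for $d\ge 3$. Your Calder\'on--Zygmund step gives $\|\Phi-\bar\Phi\|_{W^{2,p}}\le 2CM$, but there is nothing to iterate on: the source $\rho-e^\Phi$ stays only in $L^p$ (Proposition~\ref{thm : existence and uniqueness} provides no improved integrability for $e^\Phi$), so after one application you obtain $\Phi-\bar\Phi\in L^{dp/(d-2p)}$, and one checks that $dp/(d-2p)<p'$ precisely in this range. The energy inequality therefore does not close. The same issue affects your lower bound on $\bar\Phi$: the Trudinger--Moser route is specific to $d=2$, and in $d\ge 3$ the $H^1$ gradient bound alone does not yield exponential integrability of $\Phi-\bar\Phi$.

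A fix that bypasses both difficulties is to test \eqref{eq:pb} against $\Phi$ rather than $\Phi-\bar\Phi$ and exploit the sign structure. Writing $\Phi=\Phi_+-\Phi_-$ and using the elementary bound $x^{p'}\le C_{p'}(xe^x+1)$ for $x\ge 0$, the term $\int_{\T^d}\Phi_+ e^{\Phi_+}\,\dx$ absorbs $\|\Phi_+\|_{L^{p'}}$ via Young's inequality, giving $\|\nabla\Phi\|_{L^2}+\|\Phi_+\|_{L^q}\le C$ for every $q<\infty$, uniformly in $p>1$. For the negative part, testing against $-\Phi_-$ yields $\|\nabla\Phi_-\|_{L^2}^2\le \int_{\T^d}\Phi_- e^{-\Phi_-}\,\dx\le |\T^d|/e$; the mass identity $m\le\int_{\T^d}e^\Phi\,\dx$ combined with $\|e^\Phi\|_{L^p}\le M$ then forces $|\{\Phi\ge -A\}|\ge c(m,M,p)>0$ for a suitable $A$, after which a Poincar\'e inequality relative to this set controls $\|\Phi_-\|_{L^2}$.
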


\begin{lemma}[Lemma 2.18, \cite{choikoosong2025}]
\label{lem : stability for Lp}
Let $1<p\le 2$. If there are constants $m_1,M_1,M_p>0$ such that $\rho_1, \rho_2\ge 0$ satisfy the bounds
\begin{align*}
 m_1 \le \| \rho_i\|_{L^1} \le M_1, \quad \|\rho_i \|_{L^p} \le M_p,
\end{align*}
then the following stability estimate
\begin{equation*}
\intt |\nabla\Phi_1 - \nabla\Phi_2|^2 \, \dx + C_1 \intt |\Phi_1 - \Phi_2|^{p'} \,\dx \le C_2 \intt |\rho_1 - \rho_2|^p \,\dx
\end{equation*}
holds for constants $C_1,C_2>0$ dependent only on $m_1,M_1,M_p,p,d$.
\end{lemma}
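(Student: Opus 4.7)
My plan is to subtract the two Poisson--Boltzmann equations, which yields
\[
-\Delta \phi + (e^{\Phi_1}-e^{\Phi_2}) = \rho_1-\rho_2, \qquad \phi := \Phi_1-\Phi_2,
\]
and to perform a weighted energy estimate that exploits the convexity of the exponential nonlinearity together with the a priori bounds supplied by Proposition \ref{thm : existence and uniqueness} and Lemma \ref{lem: poi est}. The basic monotonicity $(e^{\Phi_1}-e^{\Phi_2})(\Phi_1-\Phi_2)\ge 0$ and its refinement via the mean-value formula $e^{\Phi_1}-e^{\Phi_2} = \phi\int_0^1 e^{\Phi_2+t\phi}\,\dt$ giving $(e^{\Phi_1}-e^{\Phi_2})\phi \ge e^{\min(\Phi_1,\Phi_2)}\phi^2$ pointwise will be the main structural inputs.

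First I would test the subtracted equation against $\phi$ and integrate by parts to obtain
\[
\|\nabla\phi\|_{L^2}^2 + \int_{\T^d}(e^{\Phi_1}-e^{\Phi_2})\phi\,\dx = \int_{\T^d}(\rho_1-\rho_2)\phi\,\dx \le \|\rho_1-\rho_2\|_{L^p}\|\phi\|_{L^{p'}},
\]
which already captures the $L^2$-gradient part of the estimate modulo quantitative coercivity in $\|\phi\|_{L^{p'}}$. To extract the latter from the exponential term, I would split $\T^d$ into a good set $G_M := \{\min(\Phi_1,\Phi_2)\ge -M\}$, on which $e^{\min(\Phi_1,\Phi_2)}\ge e^{-M}$ converts the weighted integral into an unweighted $L^2$ bound, and its complement $G_M^c$, whose measure can be made small via Chebyshev applied to the $L^q$ bounds on $\Phi_i^-$ furnished by Proposition \ref{thm : existence and uniqueness} and Lemma \ref{lem: poi est}. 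On $G_M$, Hölder (for $p'\ge 2$) combined with the pointwise bound yields $\|\phi\mathbf{1}_{G_M}\|_{L^{p'}}^{p'}\lesssim e^{M(p'-2)/2}\bigl(\int (e^{\Phi_1}-e^{\Phi_2})\phi\bigr)^{p'/2}|G_M|^{\alpha}$; on $G_M^c$, the contribution is controlled by Hölder using $\|\phi\|_{L^q}$ (for $q$ large, available through Sobolev embedding from the $H^1$-bound of Lemma \ref{lem: poi est}) paired with the small measure of $G_M^c$.

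Combining these ingredients yields an inequality of the form $C_1\|\phi\|_{L^{p'}}^{p'} \le C\int(e^{\Phi_1}-e^{\Phi_2})\phi\,\dx + \text{(lower order)}$, which plugged back into the energy identity together with Young's inequality $\|\rho_1-\rho_2\|_{L^p}\|\phi\|_{L^{p'}} \le \tfrac{1}{p}\delta^{-p'/p}\|\rho_1-\rho_2\|_{L^p}^p + \tfrac{\delta}{p'}\|\phi\|_{L^{p'}}^{p'}$ produces the desired bound after absorption, with constants depending only on $m_1,M_1,M_p,p,d$. The principal obstacle is the regional splitting step: one must calibrate $M$ so that the factor $e^{cM}$ arising from the good-set bound and the small-measure factor on the bad set balance in a quantitative manner that still depends only on the listed parameters. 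The restriction $p\le 2$ (equivalently $p'\ge 2$) is precisely what permits the interpolation between the weighted $L^2$-control on $G_M$ and the higher-order Sobolev integrability on $G_M^c$ to yield a clean sublinear contribution, closing the estimate.
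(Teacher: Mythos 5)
First, note that this paper does not actually prove the lemma: it is imported verbatim from \cite{choikoosong2025} (Lemma 2.18 there), so your proposal is being measured against the argument in that reference rather than anything in this text. Your starting point is the right one: subtracting the two equations, testing with $\phi:=\Phi_1-\Phi_2$, and using the monotonicity $(e^{\Phi_1}-e^{\Phi_2})\phi\ge e^{\min(\Phi_1,\Phi_2)}\phi^2$, together with the final Young/absorption bookkeeping, are all sound. The gaps are concentrated in the coercivity step, i.e.\ in extracting $c\|\phi\|_{L^{p'}}^{p'}$ from $\|\nabla\phi\|_{L^2}^2+\int(e^{\Phi_1}-e^{\Phi_2})\phi\,\dx$.

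There are two concrete problems. (a) On the good set your H\"older inequality points the wrong way: for $p'\ge 2$ the $L^{p'}$ norm is \emph{not} controlled by the $L^2$ norm times a positive power of $|G_M|$ (consider a tall thin spike). One must interpolate $L^{p'}$ between the weighted $L^2$ control and a quantitative $L^q$ bound with $q\ge p'$; the only such bound the hypotheses supply is $L^{2^*}$ via the $H^1$ estimate of Lemma \ref{lem: poi est} (Proposition \ref{thm : existence and uniqueness} gives $\Phi\in L^q$ for all $q<\infty$ only qualitatively, and the quantitative control of $\Phi_i^{-}$, hence of large values of $|\phi|$, stops at $L^{2^*}$), and $2^*\ge p'$ fails for $d\ge 3$ and $p$ close to $1$. (b) More seriously, the bad-set contribution cannot be absorbed. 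Using only the a priori bounds, $\int_{G_M^c}|\phi|^{p'}\le\|\phi\|_{L^q}^{p'}|G_M^c|^{1-p'/q}\le CM^{-\beta}$ is a fixed constant depending on $M$ but not on $\rho_1-\rho_2$, and a constant can never be dominated by $C_2\|\rho_1-\rho_2\|_{L^p}^p$, which may be arbitrarily small. If instead you let $M$ depend on $\varepsilon:=\|\rho_1-\rho_2\|_{L^p}$, you need $M\gtrsim\varepsilon^{-p/\beta}$, and then the good-set factor $e^{cM}=e^{c\varepsilon^{-p/\beta}}$ blows up faster than any power of $\varepsilon$ can compensate. So the calibration you yourself flag as ``the principal obstacle'' genuinely fails: the decomposition as designed does not close. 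The standard way out is to first establish a \emph{pointwise} lower bound $\Phi_i\ge -K(m_1,M_1,M_p,p,d)$ (cf.\ the bound $e^{\Phi}\ge C[\rho]>0$ from \cite{choikoosong2025} invoked in the proof of Lemma \ref{lem: p_t phi regularity}); then the elementary inequality $(e^a-e^b)(a-b)\ge c_{K,p'}|a-b|^{p'}$ for $a,b\ge -K$ (valid since $(e^t-1)t\ge c_{p'}t^{p'}$ for all $t>0$ when $p'\ge2$) converts the monotonicity term directly into $c\intt|\phi|^{p'}\,\dx$ with no regional splitting, and the lemma follows in a few lines. Without such a lower bound, your argument has a genuine gap.
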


\begin{lemma}[Lemma 2.20, \cite{choikoosong2025}]
\label{lem : conv of energy}
    Assume that $p>1$, $\rho \in L^p_+(\T^d)$ is not identically zero, and $\rho_i \to \rho$ in $L^p(\T^d)$.
    Then
    \begin{align*}
        \int_{\T^d} (\Phi[\rho] - 1)e^{\Phi[\rho]} \,\dx = \lim_{i\to\infty} \int_{\T^d} (\Phi[\rho_i] - 1)e^{\Phi[\rho_i]} \,\dx.
    \end{align*}
\end{lemma}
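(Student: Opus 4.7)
The plan is to decompose
\[
\int_{\T^d} (\Phi_i - 1)e^{\Phi_i} \,\dx = \int_{\T^d} \Phi_i e^{\Phi_i} \,\dx - \int_{\T^d} e^{\Phi_i} \,\dx
\]
and treat the two integrals separately. The second one is an immediate consequence of the mass identity of Proposition \ref{thm : existence and uniqueness}, whereas the ``energy'' term $\int \Phi_i e^{\Phi_i}\,\dx$ will be rewritten via the weak form of the Poisson--Boltzmann equation tested against its own solution.

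First I would reduce to the case $1 < p \le 2$: if $p > 2$ then $L^p(\T^d) \hookrightarrow L^2(\T^d)$, so $\rho_i \to \rho$ in $L^2(\T^d)$ as well, and the stability lemma applies at exponent $2$. Assume now $p \in (1,2]$. Since $\rho_i \to \rho$ in $L^p$ and $\rho \not\equiv 0$, for $i$ large enough one has uniform bounds $m \le \|\rho_i\|_{L^1} \le \|\rho_i\|_{L^p} \le M$, so Lemma \ref{lem : stability for Lp} yields
\[
\|\nabla(\Phi_i - \Phi)\|_{L^2}^2 + \|\Phi_i - \Phi\|_{L^{p'}}^{p'} \le C\,\|\rho_i - \rho\|_{L^p}^p \longrightarrow 0,
\]
giving the strong convergences $\nabla \Phi_i \to \nabla \Phi$ in $L^2(\T^d)$ and $\Phi_i \to \Phi$ in $L^{p'}(\T^d)$.

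The mass integral is then handled trivially: by Proposition \ref{thm : existence and uniqueness},
\[
\int_{\T^d} e^{\Phi_i} \,\dx = \int_{\T^d} \rho_i \,\dx \longrightarrow \int_{\T^d} \rho \,\dx = \int_{\T^d} e^{\Phi} \,\dx.
\]
For the remaining term, I would test the equation $-\Delta \Phi_i + e^{\Phi_i} = \rho_i$ against $\Phi_i$ itself; this is admissible since $\Phi_i \in H^1(\T^d)$ and, by Proposition \ref{thm : existence and uniqueness}, $\Phi_i \in L^q$ for every $q < \infty$ and $e^{\Phi_i} \in L^p$, so $\Phi_i e^{\Phi_i} \in L^1$ by H\"older. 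This produces the identity
\[
\int_{\T^d} \Phi_i e^{\Phi_i} \,\dx = \int_{\T^d} \rho_i \Phi_i \,\dx - \int_{\T^d} |\nabla \Phi_i|^2 \,\dx,
\]
and the analogue at the limit. The strong $L^2$ convergence of gradients gives $\int |\nabla \Phi_i|^2 \to \int |\nabla \Phi|^2$, while
\[
\Bigl|\int_{\T^d} \rho_i \Phi_i \,\dx - \int_{\T^d} \rho \Phi \,\dx \Bigr| \le \|\rho_i - \rho\|_{L^p}\|\Phi_i\|_{L^{p'}} + \|\rho\|_{L^p}\|\Phi_i - \Phi\|_{L^{p'}} \longrightarrow 0
\]
by H\"older and the $L^{p'}$ convergence above. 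Combining yields $\int \Phi_i e^{\Phi_i} \to \int \Phi e^{\Phi}$, which together with the mass limit proves the claim.

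The main obstacle I expect is exactly what this approach circumvents: the integrand $(\Phi_i - 1) e^{\Phi_i}$ has no pointwise majorant that is uniformly $L^1$, since $\Phi_i$ is only controlled in $H^1$ and $L^{p'}$ while $e^{\Phi_i}$ is only controlled in $L^p$, so a direct dominated- or Vitali-convergence argument on the product would require showing uniform integrability of $\Phi_i e^{\Phi_i}$ for exponents that the stability estimate does not quite supply in high dimensions. Trading that pointwise control for the PDE identity obtained by testing with $\Phi_i$ bypasses this difficulty entirely and reduces everything to the strong convergences that \emph{are} available from Lemma \ref{lem : stability for Lp}.
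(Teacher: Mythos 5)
Your argument is correct. Note that this paper does not actually prove the lemma --- it is imported verbatim from \cite{choikoosong2025} (Lemma 2.20 there) --- so there is no in-text proof to compare against; I can only assess your proposal on its own terms, and it stands as a valid self-contained proof. The decomposition $\int(\Phi_i-1)e^{\Phi_i} = \int \Phi_i e^{\Phi_i} - \int e^{\Phi_i}$, handling the second integral by the neutrality identity and the first by testing $-\Delta\Phi_i + e^{\Phi_i} = \rho_i$ against $\Phi_i$ itself, cleanly reduces everything to $\int|\nabla\Phi_i|^2 \to \int|\nabla\Phi|^2$ and $\int\rho_i\Phi_i\to\int\rho\Phi$, both of which follow from the quantitative stability of Lemma \ref{lem : stability for Lp} together with the uniform $L^1$/$L^p$ bounds that hold for large $i$. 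The reduction of $p>2$ to $p=2$ via the finite-measure embedding is legitimate because $\Phi[\rho]$ is independent of the exponent used.

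The only step that deserves one more sentence is the admissibility of $\psi=\Phi_i$ in the weak formulation: the weak solution is a priori tested against bounded $H^1$ functions, so you should insert the standard truncation $\psi_k=\max(-k,\min(k,\Phi_i))$ and pass $k\to\infty$, using $\nabla\psi_k\to\nabla\Phi_i$ in $L^2$ and dominated convergence for $\int e^{\Phi_i}\psi_k$ and $\int\rho_i\psi_k$ (both dominated by the integrable products $e^{\Phi_i}|\Phi_i|$ and $\rho_i|\Phi_i|$, which are in $L^1$ exactly by the H\"older pairing you invoke). With that remark added, the proof is complete, and your closing observation is apt: the PDE identity is precisely what lets you avoid proving uniform integrability of $\Phi_i e^{\Phi_i}$, for which the available estimates ($e^{\Phi_i}$ only in $L^p$, $\Phi_i$ only in $L^{p'}$ and $H^1$) would be borderline.
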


Concerning the weak convergence of densities, we have the following weak stability and lower semi-continuity of energy functionals:

\begin{lemma}\label{lem:wsta}
If $\rho_n \weakto \rho$ weakly in $L^p(\T^d)$ for some $p>1$, then we have respectively:
\[
\nabla \Phi[\rho_n] \weakto \nabla \Phi[\rho] \quad\text{in}\quad L^2(\T^d)\qquad\text{and}\qquad e^{\Phi[\rho_n]} \weakto e^{\Phi[\rho]}\quad \text{in}\quad L^p(\T^d).
\]
Furthermore,
\[
 \intt \frac{1}{2}|\nabla \Phi[\rho]|^2 + e^{\Phi}(\Phi-1)+1\,\dx \le \liminf_{n \to \infty} \intt \frac{1}{2}|\nabla \Phi[\rho_n]|^2 + e^{\Phi[\rho_n]}(\Phi[\rho_n]-1)+1\,\dx.
\]
\end{lemma}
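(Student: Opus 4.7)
The plan is to extract uniform $H^1$-bounds on $\{\Phi[\rho_n]\}$ from Lemma \ref{lem: poi est}, identify the weak limit via the uniqueness assertion of Proposition \ref{thm : existence and uniqueness}, and deduce the lower semicontinuity from convexity. First, weak convergence $\rho_n \weakto \rho$ in $L^p(\T^d)$ gives a uniform bound on $\|\rho_n\|_{L^p}$; testing against the constant function $1$ yields $\|\rho_n\|_{L^1} \to \|\rho\|_{L^1}$. Since $\Phi[\rho]$ is implicitly well-defined, we must have $\rho \not\equiv 0$, so $\|\rho_n\|_{L^1}$ is uniformly bounded below for large $n$. Lemma \ref{lem: poi est} then produces a uniform bound $\|\Phi[\rho_n]\|_{H^1(\T^d)} \le C$.

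I would then extract a subsequence (not relabeled) such that $\Phi[\rho_n] \weakto \Phi^*$ in $H^1(\T^d)$. The Rellich--Kondrachov compact embedding delivers strong $L^q$ convergence below the Sobolev exponent, and a further diagonal extraction provides $\Phi[\rho_n] \to \Phi^*$ a.e.\ on $\T^d$, so $e^{\Phi[\rho_n]} \to e^{\Phi^*}$ a.e. Combined with the uniform bound $\|e^{\Phi[\rho_n]}\|_{L^p} \le \|\rho_n\|_{L^p} \le C$ supplied by Proposition \ref{thm : existence and uniqueness}, the classical fact that a.e.\ convergence together with $L^p$-boundedness implies weak $L^p$-convergence (for $1<p<\infty$) yields $e^{\Phi[\rho_n]} \weakto e^{\Phi^*}$ in $L^p(\T^d)$. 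Passing to the limit in the weak formulation
\[
\intt \nabla \Phi[\rho_n]\cdot \nabla \varphi + e^{\Phi[\rho_n]}\varphi \,\dx = \intt \rho_n \varphi \,\dx, \qquad \varphi \in H^1(\T^d),
\]
identifies $\Phi^*$ as a weak solution to $-\Delta \Phi^* + e^{\Phi^*} = \rho$, so by the uniqueness in Proposition \ref{thm : existence and uniqueness}, $\Phi^* = \Phi[\rho]$. Since every subsequence admits a further subsequence converging to the same limit, the whole sequence converges, giving both weak convergences claimed in the statement.

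For the lower semicontinuity, the functional $\psi \mapsto \intt \tfrac{1}{2}|\nabla \psi|^2 \,\dx$ is weakly lower semicontinuous on $H^1$ by the standard convexity of the $L^2$-norm. Writing $e^{\Phi}(\Phi-1)+1 = h(\rho_e)$ with $\rho_e := e^{\Phi}$ and $h(s) := s\log s - s + 1$, the function $h$ is convex, continuous, and nonnegative on $[0,\infty)$, so $\rho_e \mapsto \intt h(\rho_e)\,\dx$ is a convex integral functional on $L^p(\T^d)$, hence weakly lower semicontinuous. Summing the two contributions and invoking the weak convergences already established yields the desired lower bound.

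I expect the main obstacle to lie in passing a weak $L^p$ limit through the exponential nonlinearity $\Phi \mapsto e^{\Phi}$. Its resolution hinges on combining the Sobolev compactness---which upgrades weak $H^1$ convergence to a.e.\ convergence of $\Phi[\rho_n]$---with the a priori $L^p$-bound on $e^{\Phi[\rho_n]}$ furnished by Proposition \ref{thm : existence and uniqueness}; without the latter control, one could not convert the a.e.\ limit into a weak $L^p$ limit suitable for passing into the Poisson--Boltzmann equation.
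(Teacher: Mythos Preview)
The paper states Lemma \ref{lem:wsta} without proof, presenting it as a preliminary consequence of the results quoted from \cite{choikoosong2025}. Your argument is correct and is precisely the natural route one would take given those ingredients: uniform $H^1$-bounds from Lemma \ref{lem: poi est}, Rellich--Kondrachov to upgrade to a.e.\ convergence, the $L^p$-bound on $e^{\Phi[\rho_n]}$ from Proposition \ref{thm : existence and uniqueness} to obtain weak $L^p$ convergence of the exponential, identification of the limit via uniqueness, and finally the convexity of $s\mapsto s\log s - s + 1$ together with weak lower semicontinuity of the $L^2$-norm for the energy inequality. There is nothing to compare against, and no gap in your proposal.
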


\subsection{Regularized system}
We begin by considering a regularized system which formally converges to \eqref{main eq}. Notice that we set an independent regularization parameter for the bulk velocity (note the extra convolution in $u$), due to the difficulties arising from this singular term. For $\ve,\delta>0$ let us set
\begin{equation}\label{eq:regfp}
\begin{cases}
\pa_t f^{\ve,\delta} + v\cdot\nabla_x f^{\ve,\delta} + E_\ve[f^{\ve,\delta}]\cdot \nabla_v f^{\ve,\delta} = (\kappa + \delta \mathbf{1}_{\kappa=0} ) \Delta_v f^{\ve,\delta} - \nabla_v \cdot \Big((\theta^\delta * u_{f^{\ve,\delta}}^{(\ve)} - v ) f^{\ve,\delta}\Big), \\
E_\ve[f^{\ve,\delta}]:= \theta^\ve * \nabla \Phi_\ve[f^{\ve,\delta}], \\
-\Delta \Phi_\ve[f^{\ve,\delta}] = \theta^\ve * \rho_{f^{\ve,\delta}} - e^{\Phi_\ve[f^{\ve,\delta}]},\\[5pt]
u_{f^{\ve,\delta}}^{(\ve)}:= \dfrac{\rho_{f^{\ve,\delta}}u_{f^{\ve,\delta}}}{\rho_{f^{\ve,\delta}} + \ve (1 + |\rho_{f^{\ve,\delta}}u_{f^{\ve,\delta}}|)},\\
f^{\ve,\delta}(0,x,v) = f^{(\delta,0)}(x,v) = f_0(x,v)e^{-\delta|v|^2}.
\end{cases}
\end{equation}
Observe that in the case of local alignment ($\kappa=0$), we added a small viscosity term $\delta \Delta_v f^{\ve,\delta}$ on the collisional kernel. In the above, $\theta$ is a smooth, radial function such that
\begin{align*}
    \theta\in C^\infty_c(B_1;\R), \quad \int_{B_1}\theta(x)\dx = 1, \quad \theta^\ve := \frac{1}{\ve^d} \theta\left(\frac{x}{\ve}\right), \quad \theta^\delta := \frac{1}{\delta^d}\theta\left(\frac{x}{\delta}\right).
\end{align*}

Let us mention some of the main advantages of this choice of regularization. We observe from a straightforward computation that for any $f,g\in L^1(\R^d,(1+|v|)\dv)$:
\begin{align*}
    |u^{(\ve)}_f - u^{(\ve)}_g| \le \frac{2|\rho_f u_f - \rho_g u_g|}{\ve} + \frac{|\rho_f - \rho_g|}{\ve^2} .
\end{align*}
This stability estimate is used in the following fashion. We denote the weighted $L^p$-space norm as
\begin{align*}
    \|f\|_{L^p_q} := \left(\iinttr (1+|v|^q) |f|^p\,\dx\dv\right)^{1/p} .
\end{align*}
For the case where $p=\infty$ we set
\begin{align*}
    \|f\|_{L^\infty_q}:= \esssup_{(x,v)\in \T^d\times\R^d} (1+|v|^q) |f(x,v)|.
\end{align*}
We write by $L^p_q(\T^d\times\R^d)$ naturally the subset of $L^p(\T^d\times\R^d)$ for which $\|f\|_{L^p_q} < + \infty$.
If $q>d+2$, then a simple calculation shows that for all $f,g\in L^2_q$:
\begin{equation} \label{eq: L2q interpolation}
\begin{split}
 |\rho_g - \rho_h |+ |\rho_g u_g - \rho_h u_h|  &\le \int_{\R^d} (1+|v|) |g-h|\,\dv \\
   &= \int_{\R^d} \frac{(1+|v|)(1+|v|^q)^{1/2}}{(1+|v|^q)^{1/2}}|g-h| \, \dv \\
  &  \le \left(\int_{\R^d} \frac{(1+|v|)^2}{1+|v|^q}\,\dv \right)^{1/2}\left(\int_{\R^d} (1+|v|^q)|g-h|^2 \,\dv\right)^{1/2}.
\end{split}
\end{equation}
In particular, the stability estimate for $u^{(\ve)}$ above implies that
\begin{align*}
    |u_f^{(\ve)} - u_g^{(\ve)}| \le C_\ve \left(\intr (1+|v|^q) |f-g|^2 \,\dv \right)^{1/2}.
\end{align*}
It is clear that \eqref{eq: L2q interpolation} also implies
\begin{align}\label{eq: L2 L2q}
    \|\rho_g - \rho_h\|_{L^2(\T^d)} + \|\rho_g u_g - \rho_h u_h\|_{L^2(\T^d)} \le \|g - h\|_{L^2_q}.
\end{align}
We furthermore remark that the regularized bulk velocity is always bounded and dominated by the original:
\begin{align*}
    |u_{f}^{(\ve)}| \le \min\left\{|u_f|, \frac{1}{\ve}\right\}.
\end{align*}

Regarding the electric field, we mention that there are two regularizations: in the Poisson--Boltzmann equation for $\Phi_\ve[f^{\ve,\delta}]$, namely $\eqref{eq:regfp}_3$, the density $\rho_{f^{\ve,\delta}}$ is regularized, and then in $\eqref{eq:regfp}_2$, the electric field $\nabla\Phi_\ve[f^{\ve,\delta}]$ is regularized once more. The advantage of this choice of double regularization is that it conserves the symmetry of the system, and so, in particular the following ``regularized energy functional''
\begin{align}\label{eq: reg energy}
    \calE^{(\ve)}[h] := \iinttr \frac{|v|^2}{2}h + \kappa h\log h\,\dx\dv + \intt \frac{|\nabla \Phi_\ve[h]|^2}{2} + (\Phi_\ve[h] - 1)e^{\Phi_\ve[h]} + 1\,\dx
\end{align}
can be associated to solutions of \eqref{eq:regfp}. We remark that this type of regularization has already appeared in the works of \cite{griffinpickeringiacobelli2021a,horst1990}.

In terms of stability of the electric field, we have, thanks to the mollification, that for any $f,g\in L^2_q(\T^d\times\R^d)$
\begin{align} \label{eq: reg elec stab}
    \|E_\ve[f] - E_\ve[g]\|_{C^1(\T^d)} \le C_\ve \|\rho_f - \rho_g\|_{L^1(\T^d)} \le \|f-g\|_{L^2_q},
\end{align}
where the first inequality is due to Lemma \ref{lem : stability for Lp} and the second is due to \eqref{eq: L2 L2q}.

We now state the well-posedness result for the regularized system.

\begin{proposition}\label{prop:fp:reg}
For all $\ve>0$ and $\delta>0$, there exists $f^{\ve,\delta} \in L^\infty_{\rm loc}((0,\infty);L^2_q(\T^d\times\R^d)) \cap L^\infty([0,\infty);L^1(\T^d\times\R^d))$, satisfying $\nabla_v f^{\ve,\delta} \in L^2_{\rm loc}((0,\infty);L^2_q(\T^d\times\R^d))$, which is the unique weak solution to the regularized system \eqref{eq:regfp} in the sense that for each $t\ge 0$ and $\phi\in C_c^\infty([0,t]\times\T^d\times\R^d)$
\begin{equation}\label{eq: weak form ve,delta}
\begin{split}
    &\iinttr f^{\ve,\delta}(t) \phi(t,x,v) \,\dx\dv - \iinttr f^{(\delta,0)} \phi(0,x,v)\,\dx\dv \\
    &\quad = \int_0^t \iinttr f^{\ve,\delta} \Big( \p_s \phi + v\cdot \nabla_x \phi + E_\ve[f^{\ve,\delta}]\cdot \nabla_v \phi) \,\dx\dv\ds \\
    &\qquad + \int_0^t \iinttr -\nabla_v\phi \cdot \Big((\kappa + \delta\mathbf{1}_{\kappa=0}) \nabla_v f^{\ve,\delta} - \Big( \big( \theta^\delta * u_{f^{\ve,\delta}}^{(\ve)} - v \big) f^{\ve,\delta}\Big) \,\dx\dv\ds.
\end{split}
\end{equation}
\end{proposition}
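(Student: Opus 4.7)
\textbf{Proof strategy for Proposition \ref{prop:fp:reg}.} The plan is to construct $f^{\ve,\delta}$ via Banach's contraction mapping theorem in the space $X_T := L^\infty(0,T; L^2_q(\T^d\times\R^d))$ for some fixed $q > d+2$ and $T > 0$ sufficiently small. Given a candidate $g \in X_T$ with $g \ge 0$, we freeze the nonlinear coefficients and solve the linearized problem
\begin{equation*}
\pa_t f + v\cdot\nabla_x f + E_\ve[g]\cdot \nabla_v f = (\kappa + \delta\mathbf{1}_{\kappa=0})\Delta_v f - \nabla_v\cdot\bigl((\theta^\delta * u^{(\ve)}_g - v)f\bigr),\qquad f|_{t=0}= f^{(\delta,0)}.
\end{equation*}
Thanks to the double regularization, $E_\ve[g]$ lies in $C^1(\T^d)$ (via Lemma \ref{lem: poi est} applied to $\theta^\ve * \rho_g \in C^\infty$), while $\theta^\delta * u^{(\ve)}_g$ is a smooth drift bounded by $\ve^{-1}$ in $L^\infty$. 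Hence the above is a kinetic Fokker--Planck equation with smooth bounded coefficients and strictly positive velocity diffusion $\kappa+\delta\mathbf{1}_{\kappa=0}>0$, for which a unique weak solution $f$ with $\nabla_v f \in L^2(0,T; L^2_q)$ is obtained by a standard Galerkin approximation together with the weighted energy estimate described below.

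The key a priori bound is a weighted $L^2_q$ energy identity obtained by testing against $(1+|v|^q)f$. The spatial transport term vanishes after integration by parts on $\T^d$; the electric drift contributes a remainder bounded by $C_\ve q\,\|f\|_{L^2_q}^2$ since $E_\ve$ is divergence-free in $v$ and $|v|^{q-1} \le C(1+|v|^q)$; the diffusion yields the dissipative term $-(\kappa+\delta\mathbf{1}_{\kappa=0})\int (1+|v|^q)|\nabla_v f|^2$ up to an error absorbable by Cauchy--Schwarz; and the friction term $-\nabla_v\cdot((\theta^\delta * u^{(\ve)}_g - v)f)$ is handled by combining $\|\theta^\delta * u^{(\ve)}_g\|_{L^\infty}\le \ve^{-1}$ with the confining contribution $-\frac{q}{2}\int |v|^q f^2$ coming from the $-v$ piece. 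Gr\"onwall's lemma then produces $\|f(t)\|_{L^2_q}^2 \le e^{C(\ve,\delta,q)t}\|f^{(\delta,0)}\|_{L^2_q}^2$. Non-negativity of $f$ follows from a truncation of $f^-$ in the linear equation, and mass conservation $\|f(t)\|_{L^1} = \|f^{(\delta,0)}\|_{L^1}$ is immediate from the conservative form.

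For the contraction, let $g_1,g_2\in X_T$ with corresponding linear solutions $f_1,f_2$, and set $w:=f_1-f_2$. The difference $w$ satisfies an equation whose forcing is controlled by $\|E_\ve[g_1]-E_\ve[g_2]\|_{L^\infty}$ and $\|\theta^\delta*(u^{(\ve)}_{g_1}-u^{(\ve)}_{g_2})\|_{L^\infty}$. By \eqref{eq: reg elec stab} and the Lipschitz estimate on $u^{(\ve)}$ combined with \eqref{eq: L2q interpolation}, both quantities are dominated by $C_\ve\|g_1-g_2\|_{X_T}$. Re-running the weighted $L^2_q$ energy estimate for $w$, using $w|_{t=0}=0$ and the a priori bound from the previous paragraph, yields
\begin{equation*}
\|w\|_{X_T}^2 \le C(\ve,\delta)\,T\,e^{CT}\bigl(\|f_1\|_{X_T}^2+\|f_2\|_{X_T}^2\bigr)\,\|g_1-g_2\|_{X_T}^2,
\end{equation*}
which is a strict contraction on a suitable closed ball of $X_T$ once $T$ is small enough. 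Banach's theorem then yields a unique local-in-time solution to the nonlinear regularized system, which is also non-negative and mass-preserving.

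Since the a priori estimate depends only on $T,\ve,\delta$ and the (fixed) initial datum, no blow-up occurs in finite time. One may therefore restart from any $t_0>0$ with $f^{\ve,\delta}(t_0)\in L^2_q$ as new initial condition to extend the solution to $[0,\infty)$, thereby establishing the global weak solution claimed by Proposition \ref{prop:fp:reg}. The main technical obstacle to anticipate is the closing of the weighted $L^2_q$ estimate, since the drift coefficient $-v$ grows linearly and interacts nontrivially with the weight $(1+|v|^q)$; this is overcome precisely by exploiting the sign of $-v$, which produces the dissipative contribution $-\tfrac{q}{2}\int |v|^q f^2$, together with the $v$-diffusion that absorbs the remaining Cauchy--Schwarz remainders.
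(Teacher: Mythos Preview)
Your strategy is correct and the essential analytic ingredients coincide with the paper's: both arguments freeze the coefficients, solve the resulting linear Fokker--Planck equation with smooth bounded drift and nondegenerate $v$-diffusion, and close a weighted $L^2_q$ energy estimate by testing against $(1+|v|^q)f$. The key input in both cases is that the a priori bounds on $\|f\|_{L^\infty_t L^2_q}$ and $\|\nabla_v f\|_{L^2_t L^2_q}$ depend only on $\ve,\delta,q,d$ and the initial datum, not on the frozen coefficient $g$.

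The difference lies in how the nonlinear problem is assembled. You run Banach's contraction on a short interval and then continue, whereas the paper sets up an iterative sequence $f_k\mapsto f_{k+1}$ directly on $[0,T]$ and proves it is Cauchy in $L^\infty(0,T;L^2_q)$. Because the difference estimate picks up a factor $\|\nabla_v f_k\|_{L^2_q}$ that is only $L^2$ in time (not $L^\infty$), the paper packages this into a differential inequality $a_{k+1}'\le b_{k+1}(a_{k+1}+a_k)$ with $\sup_k\int_0^T b_{k+1}^2<\infty$, and invokes a tailored induction lemma to obtain the summability of $\sum a_k$. Your contraction approach handles the same $L^2_t$ coefficient simply by Cauchy--Schwarz in time, which produces the small factor $T$; this is more elementary but forces the short-time-plus-continuation structure. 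A minor quibble: in your displayed contraction estimate the factor $\|f_1\|_{X_T}^2+\|f_2\|_{X_T}^2$ is spurious---what actually appears is $\|\nabla_v f_2\|_{L^2(0,T;L^2_q)}^2$, which is already controlled by the $g$-independent a priori bound and hence by the initial data alone. This does not affect the conclusion, since on the relevant ball everything is uniformly bounded and the factor $T$ still yields a strict contraction.
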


\begin{proof}
\textbf{Step I: The linear system.}
The starting point is the following linear system
\bq\label{eq:fp:lin}
\begin{cases}
\p_t f + v\cdot \nabla_x f + E_\ve[g] \cdot \nabla_v f = (\kappa+ \delta 1_{\kappa=0}) \Delta_v f - \nabla_v \cdot \Big((\theta^\delta * u_{g}^{(\ve)} - v )f\Big) , \\
f(0,\cdot,\cdot)=f^{(\delta,0)}(\cdot,\cdot),
\end{cases}
\eq
where $g$ is assumed given and extracted from
\[
g \in \calC_{\rm FP}^{q,T} :=\lt\{g \in L^\infty(0,T;L^2_q(\T^d\times \R^d) \cap C([0,T];L^1(\T^d\times\R^d)) : \|g(t)\|_{L^1} = \|g(0)\|_{L^1},\, g|_{t=0} =f^{(\delta,0)}\rt\}.
\]
In the above we suppose $q\in (d+2,\infty)$ and $T>0$ are fixed. Owing to the regularizations, Proposition \ref{thm : existence and uniqueness}, and $|u^{(\ve)}|\le 1/\ve$, we immediately obtain the following bounds:
\begin{equation} \label{eq: trivial bounds}
\begin{split}
    &\sup_{t\in [0,T]} \|E_\ve[g]\|_{C^1(\T^d)}\le C_{\ve,\delta,f_0}, \qquad \sup_{t\in [0,T]} \|\theta^\delta*u^{(\ve)}_{g}\|_{C^1(\T^d)}\le C_\ve.
\end{split}
\end{equation}
As the diffusion in $v$ is nondegenerate, we deduce the existence of a unique weak solution $f\in C([0,T];L^1(\T^d\times\R^d))$ via classical existence theory, see for instance \cite{degond1985,degond1986, karpermellettrivisa2013}. We remark that the solution to \eqref{eq:fp:lin} exhibits an explicit representation through the Feynman--Kac formula: utilizing it we can obtain, cf. \cite[Proposition 2.2]{choihwangyoo}
\begin{align*}
    \sup_{t\in [0,T]} \|f(t)\|_{L^\infty_q} \le C_{\ve,\delta,f_0,T},
\end{align*}
which justifies the integration by parts that are performed in the sequel of this proof. Another particular deduction is that $f\in C_{\rm FP}^{q,T}$.

\textbf{Step II: Explicit $L^2_q$-estimates.}  We now provide explicit estimates in $L^2_q$ to see what bounds we can obtain for $f$ and $\nabla_v f$. Notice that the solution $f$ to \eqref{eq:fp:lin} satisfies
\begin{equation} \label{eq: L2q est}
\begin{split}
    &\ddt \iinttr(1+|v|^q) |f|^2 \,\dx\dv + 2(\kappa + \delta\mathbf{1}_{\kappa=0}) \iinttr (1+|v|^q) |\nabla_v f|^2\,\dx\dv \\
    &\quad = \iinttr -(1+|v|^q) E_\ve[g]\cdot \nabla_v (f^2) + (1+|v|^q) 2f \nabla_v \cdot \Big( (\theta^\delta * u_g^{(\ve)} - v) f \Big) \,\dx\dv \\
    &\quad = \iinttr qv|v|^{q-2} \cdot E_\ve[g] |f|^2 \,\dx\dv  + \iinttr -2qv|v|^{q-2} |f|^2 \cdot \theta^\delta * u_g^{(\ve)} + 2q|v|^q |f|^2 \,\dx\dv\\
    &\qquad + \iinttr (1+|v|^q) (\theta^\delta * u_g^{(\ve)} - v) \cdot \nabla_v f^2 \,\dx\dv \\
    &\quad =: \sum_{i=1}^3 I_i.
\end{split}
\end{equation}
$\bullet$ Estimate of $I_1$: we have using \eqref{eq: trivial bounds} and Young's inequality
\begin{align*}
    |I_1| \le C_{\ve,q} \iinttr (1+|v|^q) |f|^2\,\dx\dv.
\end{align*}
$\bullet$ Estimate of $I_2$: In similar nature, using \eqref{eq: trivial bounds}:
\begin{align*}
    |I_2| \le C_{\ve,\delta} \iinttr (1+|v|^q) |f|^2\,\dx\dv.
\end{align*}
$\bullet$ Estimate of $I_3$: We integrate by parts to have
\begin{align*}
    |I_3| &= \left| \iinttr -qv|v|^{q-2} \cdot \theta^\delta * u_g^{(\ve)} |f|^2 + (q+d) (1+|v|^q) |f|^2\,\dx\dv \right| \le C_{q,\ve,\delta,d} \iinttr (1+|v|^q) |f|^2\,\dx\dv.
\end{align*}
Collecting the estimates, and recalling that $f^{(\delta,0)}$ has arbitrary algebraic decay in $v$, we conclude from Gr\"onwall's lemma that
\begin{align*}
    \sup_{t\in [0,T]} \iinttr (1+|v|^q) |f|^2\,\dx\dv \le C_{\ve,\delta,q,d,T} < +\infty.
\end{align*}
In view of this and the bounds for the $I_i$, we also deduce from \eqref{eq: L2q est} that
\begin{align}\label{eq:apri:vavi}
    \int_0^t\iinttr (1+|v|^q) |\nabla_v f|^2\,\dx\dv\ds \le C_{\kappa,\ve,\delta,q,d} \, t .
\end{align}
We emphasize that these estimates are independent of the choice of $g$.

\textbf{Step III: The iteration sequence and Cauchy estimates.} Let us remind the reader of some simple stability estimates: for any $h_1,h_2 \in \calC_{\rm FP}^{q,T}$, we recall from \eqref{eq: reg elec stab}
\begin{equation} \label{eq: elec Linfty, L2q}
 \|E_\ve[h_1]-E_\ve[h_2]\|_{L^\infty(\T^d)} \le C_\ve \|\rho_{h_1} - \rho_{h_2}\|_{L^2(\T^d)} \le \|h_1 - h_2\|_{L^2_q},
\end{equation}
and also for the regularized bulk velocity (see \eqref{eq: L2 L2q}):
\begin{equation} \label{eq: delta ve u stab}
\begin{split}
 \|\theta^\delta *u^{(\ve)}_{h_1} - \theta^\delta *u^{(\ve)}_{h_2}\|_{L^\infty} &\le C_{\ve,\delta} \lt(\|\rho_{h_1} u_{h_1} - \rho_{h_2} u_{h_2}\|_{L^2(\T^d)} + \|\rho_{h_1} -u_{h_2} \|_{L^2(\T^d)}\rt) \le \|h_1 - h_2\|_{L^2_q}.
\end{split}
\end{equation}
With the aid of these stability estimates, we now demonstrate that the following iterative sequence
\begin{align*}
    \begin{cases}
    \p_t f_{k+1} + v\cdot \nabla_x f_k + E_\ve[f_k]\cdot \nabla_v f_{k+1} = (\kappa + \delta \mathbf{1}_{\kappa = 0}) \Delta_v f_{k+1} - \nabla_v\cdot \Big( (\theta^\delta * u_{f_k}^{(\ve)} - v) f_{k+1} \Big) \quad \forall k\ge 1,\\
    f_{k}|_{t=0} = f^{(\delta,0)} \quad \forall k\ge 1,
    \end{cases}
\end{align*}
with initial step $f_1(t,x,v) := f^{(\delta,0)}(x,v)$, converges in $L^\infty(0,T;L^2_q(\T^d\times\R^d))$ to a solution of \eqref{eq:regfp}. A direct computation reveals the following Cauchy estimates
\begin{align*}
    &\ddt \iinttr (1+|v|^q) (f_{k+1} - f_k)^2 \,\dx\dv + (\kappa+\delta\mathbf{1}_{\kappa=0}) \iinttr (1+|v|^q) |\nabla_v (f_{k+1} - f_k)|^2\,\dx\dv \\
    &\quad = - \iinttr (1+|v|^q)(f_{k+1}-f_k) \nabla_v\cdot (E_\ve[f_k] f_{k+1} - E_\ve[f_{k-1}] f_k) \,\dx\dv\\
    &\qquad - \iinttr (1+|v|^q) (f_{k+1}-f_k) \nabla_v\cdot \Big( \theta^\delta * u_{f_k}^{(\ve)} f_{k+1} - \theta^\delta * u_{f_{k-1}}^{(\ve)} f_k \Big) \dx\dv\\
    &\qquad + \iinttr (1+|v|^q) (f_{k+1}-f_k) \nabla_v\cdot (v(f_{k+1} - f_k)) \,\dx\dv\\
    &\quad =: \sum_{i=1}^3 J_i.
\end{align*}
$\bullet$ Estimate of $J_1$: We telescope the difference as
\begin{align*}
    J_1 &= -\iinttr (1+|v|^q) (f_{k+1} - f_k) E_\ve[f_k] \cdot \nabla_v(f_{k+1} - f_k)\,\dx\dv \\
    &\quad - \iinttr (1+|v|^q) (f_{k+1}-f_k) (E_\ve[f_k]-E_\ve[f_{k-1}])\cdot \nabla_v f_k\,\dx\dv \\
    &=: J_{11} + J_{12},
\end{align*}
and estimate
\begin{align*}
    |J_{11}| = \left| \iinttr qv|v|^{q-2} \cdot E_\ve[f_k] (f_{k+1}-f_k)^2 \,\dx\dv  \right| \le C_{\ve,q} \iinttr (1+|v|^q) (f_{k+1} - f_k)^2\,\dx\dv.
\end{align*}
We majorize $J_{12}$ using \eqref{eq: elec Linfty, L2q} and the Cauchy--Schwarz inequality as
\begin{align*}
    |J_{12}| &\le \|E_\ve[f_k] - E_\ve[f_{k-1}]\|_{L^\infty} \iinttr (1+|v|^q) |f_{k+1}-f_k| |\nabla_v f_k|\,\dx\dv \\
    &\le C_\ve \|f_k - f_{k-1}\|_{L^2_q} \left(\iinttr (1+|v|^q)(f_{k+1}-f_k)^2 \dx\dv\right)^{1/2} \left(\iinttr(1+|v|^q)|\nabla_v f_k|^2 \dx\dv\right)^{1/2} \\
    &\le C_\ve \|f_k - f_{k-1} \|_{L^2_q} \|f_{k+1} - f_k\|_{L^2_q} \|\nabla_v f_k\|_{L^2_q}.
\end{align*}

\noindent
$\bullet$ Estimate of $J_2$: We also telescope the difference as
\begin{align*}
    J_2 &= -\iinttr (1+|v|^q) (f_{k+1} - f_k) \nabla_v\cdot \Big( \theta^\delta * (u_{f_k}^{(\ve)} - u_{f_{k-1}}^{(\ve)}) f_{k+1} \Big)\dx\dv\\
    &\quad - \iinttr (1+|v|^q) (f_{k+1} - f_k) \nabla_v \cdot \Big( \theta^\delta * u_{f_{k-1}}^{(\ve)} (f_{k+1} - f_k) \Big) \dx\dv\\
    &=: J_{21} + J_{22}.
\end{align*}
Since $\theta^\delta$ and $u^{(\ve)}$ are independent of $v$, $J_{21}$ can be estimated as
\begin{align*}
    |J_{21}| &= \left|\iinttr (1+|v|^q) (f_{k+1} - f_k) (\theta^\delta * (u_{f_k}^{(\ve)} - u_{f_{k-1}}^{(\ve)} ) ) \cdot \nabla_v f_{k+1} \,\dx\dv \right| \\
    &\le \|\theta^\delta * (u_{f_k}^{(\ve)} - u_{f_{k-1}}^{(\ve)})\|_{L^\infty} \iinttr (1+|v|^q) |f_{k+1} - f_k| |\nabla_v f_{k+1}|\,\dx\dv \\
    &\le C_{\kappa,\ve,\delta,d,t,q} \|f_k - f_{k-1}\|_{L^2_q} \|f_{k+1} - f_k\|_{L^2_q} \|\nabla_v f_{k+1}\|_{L^2_q},
\end{align*}
the last line follows from \eqref{eq: delta ve u stab} and again the Cauchy--Schwarz inequality. Next, by a simple integration by parts (and using the regularization), we find
\begin{align*}
    |J_{22}| &\le C_{\ve,\delta} \left|\iinttr q|v|^{q-1} (f_{k+1} - f_k)^2 \right|  \le C_{\ve,\delta,q} \iinttr (1+|v|^q) (f_{k+1} - f_k)^2\,\dx\dv.
\end{align*}
$\bullet$ Estimate of $J_3$: Integrating by parts, we immediately obtain
\begin{align*}
    |J_3| = \iinttr q|v|^q (f_{k+1} - f_k)^2 \,\dx\dv .
\end{align*}

Collecting the estimates and then applying Young's inequality, we find that
\begin{align}
    \ddt \|f_{k+1}-f_k\|_{L^2_q}^2 \le C_{\kappa,\ve,\delta,d,q}(1 + \|\nabla_v f_{k+1}\|_{L^2_q} + \|\nabla_v f_k\|_{L^2_q}) \Big(\|f_k - f_{k-1}\|_{L^2_q}^2 + \|f_{k+1} - f_k\|_{L^2_q}^2 \Big). \label{eq:regfp:key}
\end{align}
Introducing the notations
\begin{align*}
    a_k(t) := \|f_{k+1}(t) - f_{k}(t)\|_{L^2_q}^2, \quad b_k(t) := C_{\kappa,\ve,\delta,d,q}(1 + \|\nabla_v f_{k+1}(t)\|_{L^2_q} + \|\nabla_v f_k(t)\|_{L^2_q}),
\end{align*}
thanks to \eqref{eq:apri:vavi} and \eqref{eq:regfp:key}, we observe that Lemma \ref{lem: cauchy} below applies, which demonstrates that $\sum_{k=1}^\infty a_k(t)$ is uniformly convergent in $[0,T]$. The Cauchy criterion then shows
\begin{align*}
    \sup_{t \in [0,T]} \|f_m - f_\ell\|_{L^2_q}^2 \le \sup_{t\in [0,T]} \sum_{i=\ell+1}^m a_i(t) \xrightarrow[\ell,m\to\infty]{} 0,
\end{align*}
thus $\{f_k\}$ is Cauchy in $L^\infty(0,T;L^2_q(\T^d\times\R^d))$. It is then straightforward to verify that the limit $f^{\ve,\delta}$ is a solution to \eqref{eq:regfp} in the sense of distributions. Uniqueness of $f^{\ve,\delta}$ in $L^\infty(0,T;L^2_q(\T^d\times\R^d))$ follows as a straightforward consequence of the stability estimates \eqref{eq: elec Linfty, L2q}--\eqref{eq: delta ve u stab}.

Finally, we note that the arbitrariness of $T>0$ and the mass conserving nature of \eqref{eq:fp:lin} demonstrate that, by a diagonal argument, our solution extends globally to $f^{\ve,\delta}\in L^\infty([0,\infty);L^1(\T^d\times\R^d)) \cap L^\infty_{\rm loc}([0,\infty);L^2_q(\T^d\times\R^d))$. This completes the proof.

\end{proof}

In the above, we utilized the following lemma, which provides sufficient conditions for $a_k(t)$ to be Cauchy in $L^\infty(0,T)$. Let us record the statement and a quick proof.
\begin{lemma} \label{lem: cauchy}
    Assume that $\{a_k(t)\}_{k=1}^\infty \subset C^1([0,T])$ is a sequence of nonnegative functions with
    \begin{align*}
        &a_{k+1}'(t) \le b_{k+1}(t) (a_{k+1}(t) + a_k(t)),\\
        &b_{k+1}\ge 0, \quad \sup_k \int_0^T b_{k+1}^2(s)\,\ds \le M_T < +\infty.
    \end{align*}
    Suppose $a_k(0)=0$ for all $k\in \bbN$. Then we have
    \begin{align*}
        a_{k+1}(t) \le A \left(e^{M_t^{1/2} T^{1/2}} M_T^{1/2} \right)^k \left(\frac{t^k}{k!}\right)^{1/2},
    \end{align*}
    where
    \begin{align*}
        A := \sup_{t\in [0,T]} a_1(t).
    \end{align*}
    In particular the series $\sum_{k=1}^\infty a_k(t)$
    is uniformly convergent in $[0,T]$.
\end{lemma}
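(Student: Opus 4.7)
The plan is to apply a linear Gr\"onwall-type argument to the one-step inequality $a_{k+1}'(t) \le b_{k+1}(t)(a_{k+1}(t) + a_k(t))$, and then iterate in $k$ while exploiting the uniform $L^2$ bound on $b_{k+1}$ to gain a factor of $1/\sqrt{k!}$ at each step.

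First, I would treat the inequality as a scalar linear ODE inequality with forcing $b_{k+1}(t) a_k(t)$. Since $a_{k+1}(0)=0$, the standard integrating-factor argument with weight $\exp(\int_0^t b_{k+1})$ gives
\begin{equation*}
a_{k+1}(t) \;\le\; \int_0^t \exp\Bigl(\int_s^t b_{k+1}(\tau)\,\dtau\Bigr)\, b_{k+1}(s)\, a_k(s)\,\ds.
\end{equation*}
Applying Cauchy--Schwarz to the exponent yields $\int_s^t b_{k+1}(\tau)\,\dtau \le (t-s)^{1/2} M_T^{1/2} \le T^{1/2} M_T^{1/2}$, so the exponential factor is uniformly controlled by $C := \exp(T^{1/2} M_T^{1/2})$, giving
\begin{equation*}
a_{k+1}(t) \;\le\; C \int_0^t b_{k+1}(s)\, a_k(s)\,\ds.
\end{equation*}

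Next, I would set up the induction claim $a_{k+1}(t) \le A\, (C M_T^{1/2})^{k}\,(t^k/k!)^{1/2}$ for every $k\ge 0$, with base case $a_1 \le A$ by definition of $A$. For the induction step, apply Cauchy--Schwarz to the integral above to get
\begin{equation*}
a_{k+1}(t) \;\le\; C\Bigl(\int_0^t b_{k+1}^2(s)\,\ds\Bigr)^{1/2}\Bigl(\int_0^t a_k^2(s)\,\ds\Bigr)^{1/2} \;\le\; C\, M_T^{1/2}\Bigl(\int_0^t a_k^2(s)\,\ds\Bigr)^{1/2},
\end{equation*}
and then substitute the inductive bound on $a_k$. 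The integral $\int_0^t s^{k-1}/(k-1)!\,\ds = t^k/k!$ produces exactly the factorial improvement needed, closing the induction and matching the target bound up to trivial adjustments of the constants.

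Finally, to deduce uniform convergence of $\sum_k a_k(t)$ on $[0,T]$, I would apply the Weierstrass $M$-test with the majorant $\sum_k A (C M_T^{1/2} T^{1/2})^k /\sqrt{k!}$. Convergence of this numerical series is immediate from the ratio test, since the ratio of consecutive terms is $(C M_T^{1/2} T^{1/2})/\sqrt{k+1} \to 0$. I do not anticipate a real obstacle here; the only mildly delicate step is arranging the double application of Cauchy--Schwarz (once on the exponent, once on the convolution in $s$) so as to both decouple $b_{k+1}$ from $a_k$ and gain a genuine factorial denominator, which is what makes the subsequent series summable.
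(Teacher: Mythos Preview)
Your proof is correct and follows essentially the same route as the paper: integrating-factor Gr\"onwall to reduce to $a_{k+1}(t)\le C\int_0^t b_{k+1}(s)a_k(s)\,\ds$, then Cauchy--Schwarz against the uniform $L^2$ bound on $b_{k+1}$ to generate the $1/\sqrt{k!}$ factor by induction, and finally the Weierstrass $M$-test. The only cosmetic difference is that the paper substitutes the inductive hypothesis before applying Cauchy--Schwarz rather than after, and invokes Stirling rather than the ratio test for summability; these are equivalent.
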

\begin{proof}
    The assertion can be proven directly via induction, but we first provide computations for $a_2$ and $a_3$ as they are more demonstrative of how the estimates are deduced.
    
    We observe from a direct computation and the given inequality for the $a_{k+1}'$ that
\[
        \left(e^{-B_{k+1}(t)} a_{k+1}(t) \right)' = e^{-B_{k+1}} a_{k+1}' - b_{k+1} e^{-B_{k+1}}a_{k+1}  \le b_{k+1} a_k e^{-B_{k+1}},
\]
    where 
    \begin{align*}
        B_{k+1}(t) := \int_0^t b_{k+1}(s) \,\ds \le M_T^{1/2} t^{1/2} \le M_T^{1/2} T^{1/2} \quad \forall \,t\in [0,T].
    \end{align*}
    Since $a_k(0) = 0$ for all $k \in \N$, we obtain
    \[
    a_{k+1}(t) \leq \int_0^t b_{k+1}(s) a_k(s) e^{B_{k+1}(t)-B_{k+1}(s)}\,\ds.
    \]
 In the case $k=1$, we get
    \begin{align*}
        a_2(t) \le \int_0^t A e^{B_2(t) - B_2(s)} b_2(s)\,\ds \le A e^{M_T^{1/2} T^{1/2}} B_2(t) \le Ae^{M_T^{1/2} T^{1/2}} M_T^{1/2} t^{1/2}.
    \end{align*}
    Then, in the case $k=2$ we proceed as
    \begin{align*}
        a_3(t) &\le \int_0^t b_3(s) a_2(s) e^{B_3(t)-B_3(s)} \,\ds \\
        &\le A \left(e^{M_T^{1/2} T^{1/2}}\right)^2 M_T^{1/2} \int_0^t b_3(s) s^{1/2} \,\ds \\
        &\le A e^{M_T T} M_T^{1/2} \left(\int_0^t b_3^2(s)\,\ds\right)^{1/2} \left(\int_0^t s\,\ds\right)^{1/2} \\
        &\le A e^{M_T T} M_T \left(\frac{s^2}{2}\right)^{1/2}.
    \end{align*}
    Inductively we can obtain
    \begin{align*}
        a_{k+1}(t) &\le \int_0^t b_{k+1}(s) a_k(s) e^{B_{k+1}(t)-B_{k+1}(s)}\,\ds \\
        &\le A \left(e^{M_T^{1/2} T^{1/2}} M_T^{1/2}\right)^k \cdot e^{M_T^{1/2} T^{1/2}} \int_0^t b_{k+1}(s) \left(\frac{s^k}{k!}\right)^{1/2} \,\ds \\
        &\le A \left(e^{M_T^{1/2} T^{1/2}} M_T^{1/2}\right)^k \cdot e^{M_T^{1/2} T^{1/2}} \left(\int_0^t b_{k+1}^2(s)\,\ds\right)^{1/2} \left(\int_0^t \frac{s^k}{k!}\,\ds\right)^{1/2} \\
        &\le A \left(e^{M_T^{1/2} T^{1/2}} M_T^{1/2} \right)^{k+1} \left(\frac{t^{k+1}}{(k+1)!}\right)^{1/2}.
    \end{align*}
    In virtue of Stirling's formula and the Weierstrass $M$-test we deduce that $\sum_{k=1}^\infty a_{k+1}(t)$ is uniformly convergent for $t\in [0,T]$. 
\end{proof}

%
%
%
%
%

%
%
%
%
%

%
%
%
%
%

In order to obtain compactness with respect to each regularization parameter, we need to establish uniform estimates for the solutions $f^{\ve,\delta}$. We begin with the goal of sending $\delta\to 0$.

\begin{lemma}[Uniform estimates for \eqref{eq:regfp}] \label{lem:fp:uni}
Let $f_0 \in L^1_2 \cap L^\infty (\T^d \times \R^d)$. For the solution $f^{\ve,\delta}$ to \eqref{eq:regfp} with initial datum $f^{(\delta,0)}$, the following uniform estimates hold for all $\delta\in (0,1]$:
\begin{enumerate}[(i)]
\item $L^p$-estimates: for any $p \in (1,\infty]$:
\bq\label{eq:fp:lp}
\|f^{\ve,\delta}(t)\|_{L^p} + (\kappa + \delta \mathbf{1}_{\kappa = 0}) \left(\int_0^t \|\nabla_v f^{p/2}(s)\|_{L^2}^2 \ds\right)^{1/p} \le \|f_0\|_{L^p}  e^{dt/p'} \quad \forall \,t>0,
\eq
where the middle term is interpreted as zero when $p=\infty$.
\item Mass conservation:
\bq\label{eq:fp:mc1}
\|f^{\ve,\delta}(t)\|_{L^1}=\|f^{(\delta,0)}\|_{L^1} \ge c_0>0 .
\eq
\item Bounds on electric fields:
\bq\label{eq:fp:el2}
\sup_{0\le t \le T} \|E_\ve[f^{\ve,\delta}] \|_{C^1(\T^d)} \le C_{\ve,f_0},
\eq
\bq\label{eq:fp:eff}
\sup_{0\le t \le T}\|E_\ve[f^{\ve,\delta}]f^{\ve,\delta}\|_{L^2(\T^d \times B_R)} \le C_{\ve,f_0,R,T}.
\eq
\item Moment bounds:
\bq\label{eq:fp:mb}
\sup_{0\le t\le T}\iint_{\T^d\times\R^d} |v|^2 f^{\ve,\delta}\,\dx\dv \le C_{\ve,f_0,T}.
\eq
\item Entropy bounds:
\begin{align} \label{eq:fp:entropy}
    \sup_{0\le t \le T} \kappa \iinttr f^{\ve,\delta}|\log f^{\ve,\delta}|\,\dx\dv \le C_{\ve,f_0,T}.
\end{align}
\item Density upper bounds: for each $T>0$,
\bq\label{eq:fp:dub}
\sup_{0\le t \le T}\|\rho_{f^{\ve,\delta}}\|_{L^{\frac{d+2}{d}}(\T^d)} \le C_{\ve,f_0,T}.
\eq
\item Bounds on alignment terms:
\begin{align}
&\sup_{0\le t \le T} \|(\theta^{\delta}*u^{(\ve)}_{f^{\ve,\delta}}) f^{\ve,\delta}\|_{L^2} \le C_{\ve,f_0,T}, \label{eq:fp:ufd}\\
& \sup_{0 \le t \le T} \| vf^{\ve,\delta}\|_{L^2} \le C_{\ve,f_0,T}. \label{eq:fp:vf}
\end{align}

\item Equicontinuity: for any $0\le t_1 < t_2 \le T$,
\bq\label{eq:fp:eqcd}
\|f^{\ve,\delta}(t_1) - f^{\ve,\delta}(t_2) \|_{W^{-2,2}(\T^d\times B_R)} \le C_{\ve,f_0,R,T}|t_1 - t_2|.
\eq
\end{enumerate}
\end{lemma}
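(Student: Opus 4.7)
The plan is to derive (i)--(viii) in order of logical dependency, ensuring that each constant is uniform in $\delta\in(0,1]$. The key enabling facts are: the $\delta$-regularizations only improve regularity (adding the diffusion $\delta\Delta_v f$ when $\kappa=0$, and the spatial mollification $\theta^\delta$ of the already bounded $u^{(\ve)}$); the bound $\|\theta^\delta*u^{(\ve)}\|_{L^\infty}\le 1/\ve$ is $\delta$-uniform; and $\|f^{(\delta,0)}\|_{L^1}\ge \iinttr f_0\, e^{-|v|^2}\,\dx\dv =: c_0 > 0$ is a $\delta$-uniform lower bound on the mass (provided $f_0\not\equiv 0$).

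For (i), I would multiply \eqref{eq:regfp} by $p|f^{\ve,\delta}|^{p-2}f^{\ve,\delta}$ and integrate: the transport and electric drift terms vanish (by periodicity and by $\nabla_v\cdot E_\ve = 0$), the diffusion yields the dissipation $\tfrac{4(p-1)}{p^2}(\kappa+\delta\mathbf{1}_{\kappa=0})\|\nabla_v (f^{\ve,\delta})^{p/2}\|_{L^2}^2$, and the alignment term contributes only $\tfrac{d(p-1)}{p}\|f^{\ve,\delta}\|_{L^p}^p$ after two integrations by parts, through $\nabla_v\cdot v = d$. Gr\"onwall gives \eqref{eq:fp:lp}, with $p=\infty$ obtained by letting $p\to\infty$ (or directly from the Feynman--Kac representation used in Proposition \ref{prop:fp:reg}). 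Item (ii) is immediate from integrating the equation and using the mass lower bound above. For (iii), Young's inequality for convolutions combined with (ii) gives $\|\theta^\ve*\rho_{f^{\ve,\delta}}\|_{L^p}\le \|\theta^\ve\|_{L^{p'}}\|f_0\|_{L^1}$ for every $p>1$; Lemma \ref{lem: poi est} then yields $\|\Phi_\ve[f^{\ve,\delta}]\|_{H^1}\le C_{\ve,f_0}$, and differentiating the outer convolution delivers the $C^1$-bound on $E_\ve$.

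For the moment bound (iv), I would test \eqref{eq:regfp} with $|v|^2$: the friction $-2\iinttr|v|^2 f$ arising from $\nabla_v\cdot(vf)$ dominates, the diffusion contributes $2d(\kappa+\delta)\|f_0\|_{L^1}$, and the drifts $2\iinttr v\cdot E_\ve f$, $2\iinttr v\cdot(\theta^\delta*u^{(\ve)}_{f^{\ve,\delta}})f$ are absorbed via Young's inequality using the $C_\ve$-bounds on $\|E_\ve\|_{L^\infty}$ and $\|\theta^\delta*u^{(\ve)}\|_{L^\infty}$; this yields $\ddt \iinttr|v|^2 f + \iinttr|v|^2 f \le C_{\ve,\kappa,f_0}$, hence a uniform-in-$[0,T]$ moment bound. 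Items (v)--(vii) follow as corollaries: (v) from $f|\log f|\ls f\log^+\|f^{\ve,\delta}\|_{L^\infty} + |v|^2 f + f$ combined with (i) and (iv); (vi) from the classical kinetic interpolation $\|\rho\|_{L^{(d+2)/d}}\ls \|f\|_{L^\infty}^{2/(d+2)}\|\,|v|^2 f\|_{L^1}^{d/(d+2)}$; and for (vii), $\|\theta^\delta*u^{(\ve)}\|_{L^\infty}\le 1/\ve$ and $\|vf\|_{L^2}^2\le \|f\|_{L^\infty}\|\,|v|^2 f\|_{L^1}$ dispose of the two alignment bounds, while \eqref{eq:fp:eff} follows from $\|E_\ve\|_{L^\infty}\le C_\ve$ and the $L^2$-bound on $f$.

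Finally, (viii) is obtained by testing the weak formulation \eqref{eq: weak form ve,delta} against $\phi\in C_c^\infty(\T^d\times B_R)$ with $\|\phi\|_{W^{2,2}}\le 1$: each resulting integrand pairs $f$, $vf$, $E_\ve f$, or $\nabla_v f$ with a derivative of $\phi$ of order at most two, so the bounds from (i)--(vii) give $|\langle f^{\ve,\delta}(t_2)-f^{\ve,\delta}(t_1),\phi\rangle|\le C_{\ve,f_0,R,T}|t_2-t_1|$, exactly \eqref{eq:fp:eqcd}. I expect the main difficulty not to be any single estimate---each is routine---but rather the bookkeeping required to ensure $\delta$-uniformity throughout: this hinges on $\delta\le 1$ folding the extra diffusion into $\kappa+1$, on $\theta^\delta$ preserving $L^\infty$-bounds, and on the mass lower bound $c_0>0$ persisting for all $\delta\in(0,1]$ so that Lemma \ref{lem: poi est} remains applicable with $\delta$-independent constants.
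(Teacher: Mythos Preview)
Your proposal is correct and follows essentially the same route as the paper's proof, which also treats (i)--(viii) in the same logical order via direct energy estimates, moment computations, and the weak formulation; the paper explicitly cites \cite{karpermellettrivisa2013} for the $L^p$ estimate and \cite{griffinpickeringiacobelli2021a} for the density interpolation, exactly as you outline.

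The one place where your argument genuinely diverges is (v). The paper derives the entropy identity $\ddt\iinttr f^{\ve,\delta}\log f^{\ve,\delta}\,\dx\dv \le d\|f_0\|_{L^1}$ by renormalization (invoking \cite{bostan2024}) and then interpolates with the moment bound to upgrade to $|\log f|$. Your approach---bounding $\iinttr f|\log f|$ directly from the $L^\infty$ bound and the second moment---is more elementary and avoids the renormalization machinery entirely. However, your stated pointwise inequality $f|\log f|\ls f\log^+\|f\|_{L^\infty} + |v|^2 f + f$ is not quite right: on the set $\{f<e^{-|v|^2}\}$ you need instead the bound $-f\log f \le C\sqrt{f} \le Ce^{-|v|^2/2}$, which contributes a fixed integrable function rather than a multiple of $f$. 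The corrected integrated estimate reads
\[
\iinttr f|\log f|\,\dx\dv \le \|f\|_{L^1}\log^+\|f\|_{L^\infty} + \iinttr |v|^2 f\,\dx\dv + C_d,
\]
which is exactly what you need. With this small fix, your route to (v) is cleaner than the paper's.
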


\begin{proof}
The proof is rather standard and well-known in the literature, so we only give a sketch of the ideas and point to appropriate references.\\
\textbf{Proof of \eqref{eq:fp:lp}--\eqref{eq:fp:mc1}:} The $L^p$ estimate is obtained formally by testing $(f^{\ve,\delta})^{p-1}$ against \eqref{eq:regfp}: see for instance \cite[Lemma 2.1]{karpermellettrivisa2013}. The mass conservation \eqref{eq:fp:mc1} is also clear. The lower bound in \eqref{eq:fp:mc1} simply follows since
\[
\|f^{(\delta,0)}\|_{L^1} \ge \|f^{(\delta^*,0)}\|_{L^1} >0
\]
whenever $0<\delta \le \delta^*$, and so it is enough to restrict ourselves to a small range of $\delta>0$.

\noindent
\textbf{Proof of \eqref{eq:fp:el2}--\eqref{eq:fp:eff}:}
Eq. \eqref{eq:fp:el2} is easily obtained from the mollification. Indeed, since
\begin{align*}
    \|\theta^\ve * \rho_{f^{\ve,\delta}}\|_{L^p(\T^d)} \le C_\ve \|f^{\ve,\delta}\|_{L^1(\T^d\times\R^d)} = C_\ve \|f^{(\delta,0)}\|_{L^1(\T^d\times\R^d)} \le C_\ve \|f_0\|_{L^1(\T^d\times\R^d)} < + \infty \quad \forall p\in [1,\infty],
\end{align*}
Lemma \ref{lem: poi est} provides $H^1(\T^d)$-bounds for $\Phi_\ve[f^{\ve,\delta}]$. Since $E_\ve[f^{\ve,\delta}] = -\theta^\ve * \nabla\Phi_\ve[f^{\ve,\delta}]$, the additional mollification yields $C^1(\T^d)$-bounds which are $\ve$-dependent, but independent of $\delta$. Next, for each $R>0$ we prove \eqref{eq:fp:eff} using \eqref{eq:fp:lp}:
\[
\iint_{\T^d\times B_R}  |E_\ve[f^{\ve,\delta}] |^2 |f^{\ve,\delta}|^2\,\dx\dv \le CR^{d} \|f^{\ve,\delta}\|^2_{L^\infty} \intt |E_\ve[f^{\ve,\delta}]|^2\,\dx \le C_{f_0,\ve,R,T}.
\]

\noindent
\textbf{Proof of \eqref{eq:fp:mb}:}
The moment bound \eqref{eq:fp:mb} is obtained in a similar manner as in the proof of Proposition \ref{prop:fp:reg}, \textbf{Step II}. Owing to $f^{\ve,\delta} \in L^\infty_{\rm loc}((0,\infty);L^2_q(\T^d\times\R^d))$ and the regularization of the coefficients, we notice that $|v|^2$ is admissible through approximation as a test function in the weak formulation \eqref{eq: weak form ve,delta}. The identity obtained implies that
$t\mapsto \iinttr|v|^2 f^{\ve,\delta}\dx\dv$ is absolutely continuous and
\begin{equation} \label{eq: formal v^2}
\begin{split}
    \ddt \iinttr(1+|v|^2) f^{\ve,\delta}\dx\dv &= \iinttr E_\ve[f^{\ve,\delta}] \cdot 2v f^{\ve,\delta} + 2d (\kappa + \delta\mathbf{1}_{\kappa=0}) f^{\ve,\delta} \,\dx\dv\\
    &\quad + \iinttr -2|v|^2 f^{\ve,\delta} + \theta^\delta * u_{f^{\ve,\delta}}^{(\ve)} \cdot 2v f^{\ve,\delta}\,\dx\dv \\
    &\le C_{\ve,d,\kappa} \iinttr (1+|v|^2) f^{\ve,\delta}\,\dx\dv + C_\ve \iinttr |v|^2 f^{\ve,\delta}\,\dx\dv,
    \end{split}
\end{equation}
where we utilized that the pointwise bound $|u_{f^{\ve,\delta}}^{(\ve)}|\le 1/\ve$ implies $\|\theta^\delta * u_{f^{\ve,\delta}}^{(\ve)}\|_{L^\infty} \le 1/\ve$. Applying Gr\"onwall's lemma, we deduce \eqref{eq:fp:mb}.

\noindent
\textbf{Proof of \eqref{eq:fp:entropy}:} We give a rather formal proof here. The rigorous treatment is possible by using the arguments in \cite[Lemma 3.3]{bostan2024}. In that work, it is shown (for a similar linear Vlasov--Poisson--Fokker--Planck equation), using commutator estimates in the spirit of Diperna--Lions \cite{Dipernalions1989ODE}, that the solution $f^{\ve,\delta}$ is renormalized under the assumption of $L^\infty((0,T)\times\T^d)$ electric field, and that $z\mapsto z\log z$ is admissible as a renormalizing function. We remark that no subtleties arise in our situation, thanks to the regularizations in $E_\ve[f^{\ve,\delta}]$ and $\theta^\delta * u_{f^{\ve,\delta}}^{(\ve)}$. It is deduced that
\begin{equation} \label{eq: entropy identity}
\begin{split}
    \ddt\iinttr f^{\ve,\delta}\log f^{\ve,\delta}\,\dx\dv &= \iinttr -\frac{\kappa}{f^{\ve,\delta}} |\nabla_v f^{\ve,\delta}|^2 + (v - \theta^\delta * u_{f^{\ve,\delta}}^{(\ve)}) \cdot \nabla_v f^{\ve,\delta} \,\dx\dv  \\
    &\le \iinttr d f^{\ve,\delta}\,\dx\dv \\
    &\le C_d.
\end{split}
\end{equation}
In the above, the first inequality follows simply by discarding the negative term and integrating by parts. The second inequality is by \eqref{eq:fp:lp}. We find that $\sup_{t\in [0,T]}\iinttr f^{\ve,\delta}\log f^{\ve,\delta}\,\dx\dv \le C_{T,d}$. Then, from another classical interpolation between this and \eqref{eq:fp:mb} (see \cite[Section I]{dipernalions1988}), we obtain \eqref{eq:fp:entropy}.

\noindent
\textbf{Proof of \eqref{eq:fp:dub}:} The density upper bound \eqref{eq:fp:dub} is due to a classical interpolation of \eqref{eq:fp:lp} and \eqref{eq:fp:mb}. For unfamiliar readers we refer to \cite[Lemma 5.2]{griffinpickeringiacobelli2021a}.

\noindent
\textbf{Proof of \eqref{eq:fp:ufd}--\eqref{eq:fp:vf}:}
To confirm \eqref{eq:fp:ufd}, we again make use of $\lt\|\theta^{\delta}*u^{(\ve)}_{f^{\ve,\delta}}\rt\|_{L^\infty([0,\infty)\times \T^d)} \le \frac{1}{\ve}.$ This gives
\[
\iint_{\T^d\times \R^d} \lt|(\theta^{\delta}*u^{(\ve)}_{f^{\ve,\delta}}) f^{\ve,\delta} \rt|^2\,\dx\dv \le \frac{1}{\ve^2}\|f^{\ve,\delta} \|^2_{L^2} \le C_{\ve,f_0,T}
\]
from \eqref{eq:fp:lp}. It is also easy to observe \eqref{eq:fp:vf}, since we have
\[
\iint_{\T^d\times\R^d} |v|^2 (f^{\ve,\delta})^2\,\dx\dv \le \|f^{\ve,\delta}\|_{L^\infty} \iint_{\T^d\times\R^d} |v|^2 f^{\ve,\delta}\,\dx\dv \le C_{\ve,f_0,T}
\]
via \eqref{eq:fp:lp} and \eqref{eq:fp:mb}.

\noindent
\textbf{Proof of \eqref{eq:fp:eqcd}:}
Lastly, we confirm the $W^{-2,2}_{\rm loc}$ equicontinuity \eqref{eq:fp:eqcd}. We proceed similarly as in \cite[Lemma 6.3]{griffinpickeringiacobelli2021a}. For each $R>0$ we apply arbitrary $\phi(x,v)\in W^{2,2}_0(\T^d\times B_R)$ as a test function in the weak formulation \eqref{eq: weak form ve,delta}, then utilize the bounds \eqref{eq:fp:lp}, \eqref{eq:fp:eff}, \eqref{eq:fp:ufd}, and \eqref{eq:fp:vf} to easily find
\begin{align*}
    \sup_{t\in [0,T]} \sup_{ \substack{ \phi \in W^{2,2}_0(\T^d\times B_R) \\ \|\phi\|_{ W^{2,2}(\T^d\times B_R)}\le 1 } } \left(\ddt \iint_{\T^d\times B_R} f^{\ve,\delta}(t) \phi(x,v) \,\dx\dv \right) \le C_{\ve,f_0,R,T}
\end{align*}
for some constant $C_{\ve,f_0,R,T}>0$ independent of $\delta$. It follows that
\begin{align*}
    \|f^{\ve,\delta}(t_2) - f^{\ve,\delta}(t_1)\|_{W^{-2,2}(\T^d\times B_R)} &=  \sup_{ \substack{ \phi \in W^{2,2}_0(\T^d\times B_R) \\ \|\phi\|_{ W^{2,2}(\T^d\times B_R)}\le 1 } } \int_{t_1}^{t_2} \ddt \left(\iint_{\T^d\times B_R} f^{\ve,\delta}(t) \phi(x,v)\,\dx\dv\right) \,\dt \\
    &\le C_{\ve,f_0,R,T}|t_2-t_1|.
\end{align*}
This completes the proof.
\end{proof}

\subsection{Passing to the limit $\delta \to 0$}

Let us fix $\ve>0$ and pass to $\delta \to 0$, aiming to obtain a global weak solution to the following system:
\begin{equation}\label{eq:regfp:ve}
\begin{cases}
\pa_t f^\ve + v\cdot\nabla_x f^\ve + E_\ve[f^\ve]\cdot \nabla_v f^\ve = \kappa \Delta_v f^\ve - \nabla_v \cdot ( (u_{f^\ve}^{(\ve)} - v)f^\ve), \\
E_\ve[f^\ve]:= \theta^\ve * \nabla \Phi_\ve[f^\ve], \\
-\Delta \Phi_\ve[f^\ve] = \theta^\ve * \rho_{f^\ve} - e^{\Phi_\ve[f^\ve]},\\[5pt]
f^\ve(0,x,v) = f_0(x,v).
\end{cases}
\end{equation}

\begin{lemma}\label{lem:fp:geve}
Let the initial data $f_0$ satisfy the conditions of Theorem \ref{thm:gws}. Then for each $\ve>0$ there exists a weak solution $f^\ve\in L^\infty_{\rm loc}([0,\infty);L^1\cap L^\infty(\T^d\times\R^d))$ to \eqref{eq:regfp:ve} satisfying the kinetic entropy inequality
\bq\label{eq:fp:kei:ve}
\calE^{(\ve)}[f^\ve](t) + \int_0^t\iint_{\T^d\times\R^d} \lt|\kappa\nabla_v \ln f^\ve -(u_{f^\ve}^{(\ve)}-v)\rt|^2 f^\ve \,\dx\dv \ds \le \calE^{(\ve)}[f_0].
\eq
In the above, $\calE^{(\ve)}$ is the regularized energy functional introduced in \eqref{eq: reg energy}.

\end{lemma}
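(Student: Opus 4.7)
The plan is to construct $f^\ve$ as a weak-$*$ limit of the $\delta$-regularized solutions $\{f^{\ve,\delta}\}_{\delta>0}$ from Proposition \ref{prop:fp:reg} as $\delta \to 0^+$, and then to derive the kinetic entropy inequality \eqref{eq:fp:kei:ve} directly at the $\ve$-level by exploiting the symmetric structure afforded by the double regularization of the electric field.

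The first step uses the $\delta$-uniform bounds in Lemma \ref{lem:fp:uni} to extract a convergent subsequence. The $L^p$ bound \eqref{eq:fp:lp}, mass conservation \eqref{eq:fp:mc1}, second moment bound \eqref{eq:fp:mb}, and entropy bound \eqref{eq:fp:entropy} provide weak-$*$ compactness in $L^\infty_{\rm loc}([0,\infty); L^1\cap L^\infty(\T^d\times\R^d))$ together with $v$-tightness, and the equicontinuity estimate \eqref{eq:fp:eqcd} enables an Arzel\`a--Ascoli argument in $C([0,T]; W^{-2,2}_{\rm loc}(\T^d\times\R^d))$ along the lines of \cite{griffinpickeringiacobelli2021a}. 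This yields a subsequence $\{\delta_k\}$ and a limit $f^\ve$ with $f^{\ve,\delta_k}(t) \weakto f^\ve(t)$ weak-$*$ in $L^\infty(\T^d\times\R^d)$ for every $t\in[0,T]$. Velocity averaging combined with the $L^\infty$ and moment bounds upgrades this to strong $L^p_{\rm loc}$ convergence of $\rho_{f^{\ve,\delta_k}}$ and $\rho_{f^{\ve,\delta_k}} u_{f^{\ve,\delta_k}}$; Lemma \ref{lem : stability for Lp} via \eqref{eq: reg elec stab} then gives $C^1$-convergence of $E_\ve[f^{\ve,\delta_k}]$, while $\theta^{\delta_k}*u^{(\ve)}_{f^{\ve,\delta_k}} \to u^{(\ve)}_{f^\ve}$ pointwise owing to the $\ve$-truncation. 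Termwise passage to the limit in \eqref{eq: weak form ve,delta} verifies that $f^\ve$ is a weak solution of \eqref{eq:regfp:ve}.

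The principal difficulty is to establish the entropy inequality \eqref{eq:fp:kei:ve}. The key observation is that applying $\theta^\ve$ to both the Poisson--Boltzmann source and the field itself preserves a self-adjointness identity: pairing the time-differentiated equation $-\Delta\partial_t\Phi_\ve = \theta^\ve * \partial_t\rho_{f^\ve} - e^{\Phi_\ve}\partial_t\Phi_\ve$ with $\Phi_\ve$ and invoking the continuity equation $\partial_t\rho_{f^\ve} + \nabla_x\cdot(\rho_{f^\ve} u_{f^\ve}) = 0$ shows that the time derivative of the electron energy $\intt \frac{|\nabla\Phi_\ve|^2}{2} + (\Phi_\ve - 1)e^{\Phi_\ve} + 1 \,\dx$ exactly cancels the Vlasov-force contribution $\iinttr v \cdot E_\ve[f^\ve]\,f^\ve\,\dx\dv$ in $\ddt \iinttr H[f^\ve]\,\dx\dv$. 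Completing the square in the collision term then yields the formal identity
\[
\ddt \calE^{(\ve)}[f^\ve] + \iinttr \calD^{(\ve)}[f^\ve]\,\dx\dv = \intt \rho_{f^\ve}\, u^{(\ve)}_{f^\ve} \cdot (u^{(\ve)}_{f^\ve} - u_{f^\ve})\,\dx,
\]
where $\calD^{(\ve)}[f] := \frac{1}{f}|\kappa\nabla_v f - (u^{(\ve)}_f - v) f|^2$. Since $u^{(\ve)}_{f^\ve}$ is by construction a nonnegative scalar multiple of $u_{f^\ve}$ with multiplier in $[0,1]$, the right-hand side is nonpositive, and integration on $[0,t]$ delivers \eqref{eq:fp:kei:ve}.

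The hardest part is the rigorous justification of the preceding entropy identity for the weak solution $f^\ve$. This requires $\sqrt{f^\ve} \in L^2_{\rm loc}([0,\infty); H^1_v)$ when $\kappa>0$, together with a commutator analysis of a mollified entropy identity. The Fisher-information regularity of $\sqrt{f^\ve}$ is inherited from uniform-in-$\delta$ bounds for $f^{\ve,\delta}$---available from the diffusion term with coefficient $\kappa + \delta\mathbf{1}_{\kappa=0}$---via lower semicontinuity; the commutator analysis proceeds by mollifying $f^\ve$ in $(x,v)$ and invoking DiPerna--Lions-type commutator estimates \cite{Dipernalions1989ODE}, with the $\ve$-truncation providing the needed $L^\infty$ bound on $u^{(\ve)}_{f^\ve}$. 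In the pressureless case $\kappa = 0$ the dissipation simplifies to $\iinttr f^\ve\,|u^{(\ve)}_{f^\ve} - v|^2\,\dx\dv$ and no additional velocity regularity is needed.
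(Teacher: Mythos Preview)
Your proposal is correct and follows essentially the same route as the paper: compactness from the uniform-in-$\delta$ estimates of Lemma \ref{lem:fp:uni} combined with Arzel\`a--Ascoli in $C([0,T];W^{-2,2}_{\rm loc})$, velocity averaging for the moments, termwise passage in the weak formulation, and then derivation of the entropy identity directly at the $\ve$-level exploiting the double-regularization symmetry and the nonpositivity of $\rho_{f^\ve}\,u^{(\ve)}_{f^\ve}\cdot(u^{(\ve)}_{f^\ve}-u_{f^\ve})$. The paper justifies the entropy identity by citing the renormalization argument of \cite{bostan2024} (applicable because the $\ve$-truncation keeps $E_\ve[f^\ve]$ and $u^{(\ve)}_{f^\ve}$ in $L^\infty$), which is exactly your commutator step; your separate Fisher-information inheritance from the $\delta$-level is then redundant, since the renormalization already delivers finiteness of the dissipation as a byproduct.
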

\begin{proof}
By Lemma \ref{lem:fp:uni}, Sobolev embedding, and \eqref{eq:fp:eqcd}, we observe that for each $T,R>0$, the maps
\begin{align*}
   (0,T)\ni t\mapsto f^{\ve,\delta}(t) \in W^{-2,2}(\T^d\times B_R)
\end{align*}
are equibounded and equicontinuous (with respect to $\delta$) as members of $C([0,T];W^{-2,2}(\T^d\times B_R))$. By Ascoli's theorem we deduce that $\left\{f^{\ve,\delta}\right\}_\delta$ converges as $\delta\to 0$ in $C([0,T];W^{-2,2}(\T^d\times B_R))$ (modulo some subsequence $\delta_k^R$) to a limit $f^{\ve}_R$. Taking any sequence $R\to\infty$, and extracting diagonally, we deduce the existence of a subsequence $\delta_k$ (now independent of $R$) and a limit $f^\ve \in C([0,T];W^{-2,2}_{\rm loc}(\T^d\times \R^d)$ for which
\begin{align}\label{eq: delta neg sob}
    f^{\ve,\delta_k} \xrightarrow[k\to \infty]{} f^\ve \quad \text{in }  C([0,T];W^{-2,2}_{\rm loc}(\T^d\times\R^d)).
\end{align}

On the other hand, the uniform bounds \eqref{eq:fp:lp} and \eqref{eq:fp:mb} imply also that for each $t\ge 0$, we are guaranteed the existence of a further subsequence $\left\{\delta_j^t\right\}_j$ (possibly dependent on $t$) of $\delta_k$, for which $\left\{f^{\ve,\delta_j^t}(t)\right\}_j$ is weakly$^*$ convergent in $L^1\cap L^\infty(\T^d\times\R^d)$ to a limit $g^\ve(t)$. Clearly this implies that $\left\{f^{\ve,\delta_j^t}(t)\right\}_j$ converges in $\calD'(\T^d\times\R^d)$ to $g^\ve(t)$. In view of \eqref{eq: delta neg sob}, however, we observe that any subsequence of $f^{\ve,\delta_k}(t)$ that is convergent in $\calD'(\T^d\times\R^d)$ must necessarily converge to the same limit $f^\ve(t)$. It follows by the unique identification of this limit that the full sequence $\left\{f^{\ve,\delta_k}(t)\right\}_k$ converges in $\calD'(\T^d\times\R^d)$ to $g^\ve(t) = f^\ve(t)$. It is further easily checked that $f^\ve$ as defined in \eqref{eq: delta neg sob} verifies
\begin{align*}
    f^{\ve,\delta_k} \xrightarrow[k\to\infty]{} f^\ve \quad \text{weakly$^*$ in }  L^\infty(0,T;L^1(\T^d\times\R^d)).
\end{align*}

Another consequence of the uniform bounds \eqref{eq:fp:lp} and \eqref{eq:fp:mb} is that, by the averaging lemma of \cite{karpermellettrivisa2013}, for any $\varphi\in C^0(\R^d)$ with $|\varphi(v)|\lesssim 1+|v|$ the family
\begin{align*}
    \left\{ \int_{\R^d} f^{\ve,\delta}(t,x,v)\varphi(v)\,\dv\right\}_{\delta\in (0,1]}
\end{align*}
is relatively compact in $L^1((0,T)\times\T^d)$. In particular, we have that (modulo a further subsequence of $\delta_k$)
\begin{equation}\label{eq: delta average}
\begin{split}
    &\rho_{f^{\ve,\delta}} \xrightarrow[\delta\to 0]{} \rho_{f^\ve} \quad \text{in }  L^1((0,T)\times\T^d),\\
    &\rho_{f^{\ve,\delta}} u_{f^{\ve,\delta}} \xrightarrow[\delta\to 0]{} \rho_{f^\ve} u_{f^\ve}\quad \text{in }   L^1((0,T)\times\T^d).
    \end{split}
\end{equation}
By $L^p$-interpolation with \eqref{eq:fp:dub}, this implies
\begin{align*}
    &\rho_{f^{\ve,\delta}} \xrightarrow[\delta\to0]{} \rho_{f^\ve} \quad \text{in} \quad L^p((0,T)\times\T^d), \quad \forall p\,\in \left[1,\frac{d+2}{d}\right).
\end{align*}

Let us now pass to the limit in each term of the weak formulation \eqref{eq: weak form ve,delta} satisfied by $f^{\ve,\delta}$. In regard of the left-hand side, the weak convergence of $\left\{f^{\ve,\delta}(t)\right\}_\delta$ in $L^1\cap L^\infty(\T^d\times\R^d)$ demonstrates that
\begin{align*}
    \iinttr f^{\ve,\delta}(t) \phi(t,x,v) \,\dx\dv \xrightarrow[\delta\to 0]{} \iinttr f^\ve(t) \phi(t,x,v)\,\dx\dv.
\end{align*}
For the term of the left-hand side, which contains the initial datum, the Lebesgue dominated convergence theorem shows
\begin{align*}
    \iinttr f^{(\delta,0)}(x,v) \phi(0,x,v)\,\dx\dv \xrightarrow[\delta\to 0]{} \iinttr f_0(x,v) \phi(0,x,v)\,\dx\dv.
\end{align*}

We now discuss the terms on the right-hand side of \eqref{eq: weak form ve,delta}. From Lemma \ref{lem : stability for Lp} and Young's inequality for convolutions, we easily deduce
\begin{align*}
    \|E_\ve[f^{\ve,\delta}] - E_\ve[f^\ve]\|_{L^2((0,T)\times\T^d)} \le C\|\rho_{f^{\ve,\delta}} - \rho_{f^\ve}\|_{L^p((0,T)\times\T^d)} \to 0 \quad \text{as}\quad \delta \to 0.
\end{align*}
This implies that for any test function $\varphi\in C_c^\infty([0,\infty)\times\T^d\times\R^d)$:
\begin{align}\label{eq: delta to zero 1}
    \int_0^\infty \iinttr f^{\ve,\delta} E_\ve[f^{\ve,\delta}]\cdot \nabla_v \varphi\,\dx\dv\dt \xrightarrow[\delta\to 0]{} \int_0^\infty \iinttr f^\ve E_\ve[f^\ve] \cdot \nabla_v \varphi\,\dx\dv\dt.
\end{align}

Next we are concerned with the term $(\kappa + \delta \mathbf{1}_{\kappa=0}) \Delta_v f^{\ve,\delta}$. In the case $\kappa=0$, then for any test function $\varphi$ we have that
\begin{align}\label{eq: delta to zero 2}
    \left|\int_0^\infty \iinttr \delta f^{\ve,\delta} \Delta_v \varphi\,\dx\dv\dt \right| \le \delta \|f^{\ve,\delta}\|_{L^\infty(\textnormal{supp}(\varphi))} \|\Delta_v \varphi\|_{L^1} \xrightarrow[\delta\to 0]{} 0.
\end{align}
In the case $\kappa>0$, then \eqref{eq:fp:lp} implies that $\nabla_v f^{\ve,\delta}$ is weakly compact in $L^2_{\rm loc}((0,\infty);L^2(\T^d\times\R^d))$ and thus
\begin{align} \label{eq: delta to zero 3}
    \int_0^\infty \iinttr \kappa \nabla_v f^{\ve,\delta} \cdot \nabla_v \varphi\,\dx\dv\dt \xrightarrow[\delta\to 0]{} \int_0^\infty \iinttr \kappa \nabla_v f^\ve \cdot \nabla_v \varphi\,\dx\dv\dt 
\end{align}
for all test functions $\varphi$.

Finally, we consider in \eqref{eq:regfp} the last nontrivial term $\theta^\delta * u_{f^{\ve,\delta}}^{(\ve)} f^{\ve,\delta}$. We claim that up to a subsequence
\begin{align}\label{eq: conv of reg u delta}
    \theta^\delta * u_{f^{\ve,\delta}}^{(\ve)} \xrightarrow[\delta\to 0]{} u_{f^\ve}^{(\ve)} \quad \text{in}\quad L^r_{\rm loc}([0,\infty)\times\T^d),\quad \forall r\in [1,\infty).
\end{align}
Recall that by definition, $\left\{u_{f^{\ve,\delta}}^{(\ve)}\right\}_{\delta}$ is bounded in $L^\infty([0,\infty)\times\T^d)$ (by $1/\ve$). We notice from \eqref{eq: delta average} that $u_{f^{\ve,\delta}}^{(\ve)}$ is almost everywhere convergent to $u_{f^\ve}^{(\ve)}$. Therefore by Vitali's dominated convergence theorem $u_{f^{\ve,\delta}}^{(\ve)}\to u_{f^\ve}$ strongly in $L^r([0,T]\times\T^d)$ for any $T>0$ and $r\in [1,\infty)$. Thereon, we use the fact that $\theta^\delta$ is an approximation of the identity, and then a diagonal argument (taking $T\to\infty$), to deduce that \eqref{eq: conv of reg u delta} holds. It follows that for any test function $\varphi$:
\begin{align}\label{eq: delta to zero 4}
    \int_0^\infty \iinttr \nabla_v\varphi \cdot \Big(( \theta^\delta * u_{f^{\ve,\delta}}^{(\ve)} - v ) f^{\ve,\delta} \Big) \,\dx\dv\dt \xrightarrow[\delta\to 0]{} \int_0^\infty \iinttr \nabla_v\varphi \cdot \Big((  u_{f^\ve}^{(\ve)} - v ) f^\ve \Big) \,\dx\dv\dt .
\end{align}
Collecting \eqref{eq: delta to zero 1}, \eqref{eq: delta to zero 2}, \eqref{eq: delta to zero 3}, and \eqref{eq: delta to zero 4}, we deduce that the limit $f^\ve$ is a weak solution to \eqref{eq:regfp:ve}.

We finally show the entropy inequality \eqref{eq:fp:kei:ve}. Let us remark that due to the $\ve$-regularization, we still have $\|E_\ve[f^\ve]\|_{L^\infty(\T^d)} \le C_\ve$ and $|u_{f^\ve}^{(\ve)}|\le 1/\ve$. Using these bounds, we find that $|v|^2$ is admissible as a test function in the weak formulation for $f^\ve$. Furthermore, as explained in the proof of \eqref{eq:fp:entropy}, thanks to these regularizations, we can show as in \cite[Lemma 3.3]{bostan2024} that $f^\ve$ is renormalized. In particular, it is deduced that $t\mapsto \calE^{(\ve)}[f^\ve](t)$ is absolutely continuous and
\begin{equation}\label{eq: reg kinetic identity}
\begin{split}
    \ddt \calE^{(\ve)}[f^\ve](t) &= - \iinttr \frac{1}{f^\ve}|\kappa \nabla_v f^\ve + (v- u_{f^\ve}^{(\ve)}) f^\ve|^2 \,\dx\dv  + \intt \rho_{f^\ve} (|u_{f^\ve}^{(\ve)}|^2 - u_{f^\ve}\cdot u_{f^\ve}^{(\ve)})\,\dx.
\end{split}
\end{equation}
The above identity is deduced via the Poisson--Boltzmann equation in the following way. Taking the time derivative of $\iinttr|v|^2 f^\ve(t)\dx\dv$, we notice that in the same manner as in \eqref{eq: formal v^2}, there is precisely one term which regards the electric field:
\begin{align}\label{eq: E vf}
    \iinttr E_\ve[f^\ve] \cdot v f^\ve\,\dx\dv.
\end{align}
To handle this term, we notice that for any $t\in (0,T]$, $\varphi := \mathbf{1}_{[0,t]\times\T^d\times\R^d}$ is admissible as a test function (through approximation) in the weak formulation of $f^\ve$, which shows the mass continuity equation
\begin{align*}
    \p_t \rho_{f^\ve} + \nabla_x \cdot (\rho_{f^\ve} u_{f^\ve}) = 0 \quad \text{in }\calD'((0,T)\times\T^d).
\end{align*}
From the defining equations
\begin{align*}
    E_\ve[f^\ve] := -\theta^\ve * \Delta\Phi_\ve[f^\ve], \quad -\Delta\Phi_\ve[f^\ve] = \theta^\ve * \rho_{f^\ve} - e^{\Phi_\ve[f^\ve]},
\end{align*}
we can therefore obtain
\begin{align*}
    -\Delta \p_t \Phi_\ve[f^\ve] &= \theta^\ve * \p_t \rho_{f^\ve} - \p_t\Phi_\ve[f^\ve] e^{\Phi_\ve[f^\ve]}\\
    &= -\theta^\ve * \nabla_x\cdot (\rho_{f^\ve}u_{f^\ve}) - \p_t\Phi_\ve[f^\ve] e^{\Phi_\ve[f^\ve]} \quad \text{in}\quad \calD'(\T^d).
\end{align*}
Utilizing these, we can rewrite \eqref{eq: E vf} as
\begin{align*}
    \iinttr E_\ve[f^\ve]\cdot vf^\ve\,\dx\dv &= -\intt \nabla_x (\theta^\ve * \Phi_\ve[f^\ve]) \cdot (\rho_{f^\ve}u_{f^\ve})\,\dx \\
    &= \intt (\theta^\ve * \Phi_\ve[f^\ve]) \nabla_x\cdot (\rho_{f^\ve} u_{f^\ve})\,\dx \\
    &= \intt \Phi_\ve[f^\ve]\, \Big( \theta^\ve * \nabla_x\cdot( \rho_{f^\ve} u_{f^\ve}) \Big) \,\dx \\
    &= \intt  \Phi_\ve[f^\ve] \Big( \Delta \p_t \Phi_\ve[f^\ve] - e^{\Phi_\ve[f^\ve]} \p_t \Phi_\ve[f^\ve] \Big) \,\dx \\
    &= \ddt \left(\frac{1}{2}\intt |\nabla\Phi_\ve[f^\ve]|^2 + (\Phi_\ve[f^\ve] - 1) e^{\Phi_\ve[f^\ve]} + 1 \,\dx\right).
\end{align*}
Combining this with the computations for the entropy, which may be obtained as in \eqref{eq: entropy identity}, we deduce \eqref{eq: reg kinetic identity}. Then, we note that by definition of $u_{f^\ve}^{(\ve)}$, the last term in \eqref{eq: reg kinetic identity} is non-positive. We discard it to deduce the inequality \eqref{eq:fp:kei:ve}.
\end{proof}

%
%
%
%
%

\subsection{Passing to the limit $\ve \to 0$ and proof of Theorem \ref{thm:gws}}

The importance of the kinetic entropy inequality \eqref{eq:fp:kei:ve} established in the previous section lies in that it immediately yields uniform-in-$\ve$ $2$-velocity moment estimates for $f^\ve$, provided that there is a lower bound for $\kappa\iinttr f^\ve\log f^\ve\,\dx\dv$ that is comparable with the second moments. Although the proof is classical, we include the material here for the sake of completeness.

\begin{lemma}\label{lem:flogf:ctrl}
For any $f \in L^1_+(\R^d_v)$, it holds that for any $c>0$:
\[
\int_{\R^d} f \log f \,\dv \ge \intr -\frac{1}{4c}|v|^2f\,\dv -C_{c,d},
\]
where we assume the integrals are finite on both sides.
\end{lemma}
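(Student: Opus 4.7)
The plan is to obtain the bound via a pointwise Young-type inequality that plays the Boltzmann entropy $f \log f$ against a Gaussian weight, and then integrate. The starting observation is the convex-duality (Legendre transform) inequality
\[
a \log a \ge a g - e^{g-1} \qquad \text{for all } a \ge 0,\ g \in \R,
\]
which follows from the fact that $a \log a - a$ is the Legendre dual of $e^{g-1}$, or equivalently, from differentiating $a \mapsto a\log a - ag$ and noting its minimum at $a = e^{g-1}$ equals $-e^{g-1}$.

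Next I would apply this pointwise in $v$ to $a = f(v)$ and the specific choice $g(v) = -\frac{|v|^2}{4c}$, which gives
\[
f(v)\log f(v) \ge -\frac{|v|^2}{4c} f(v) - e^{-|v|^2/(4c) - 1}.
\]
Integrating over $\R^d$ yields
\[
\int_{\R^d} f \log f \,\dv \ge -\frac{1}{4c} \int_{\R^d} |v|^2 f \,\dv - e^{-1}\int_{\R^d} e^{-|v|^2/(4c)}\,\dv = -\frac{1}{4c} \int_{\R^d} |v|^2 f\,\dv - C_{c,d},
\]
with the explicit constant $C_{c,d} = e^{-1}(4\pi c)^{d/2}$ coming from the Gaussian integral. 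This is exactly the claimed estimate.

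No step is genuinely delicate here; the only mild subtlety is justifying that the negative part of $f\log f$ is integrable so that the integrated inequality is meaningful, but this is precisely what the hypothesis that both sides are finite guarantees. In particular, the pointwise lower bound together with the second-moment assumption implies $(f\log f)_- \in L^1(\R^d)$, so the integral on the left is well-defined in $(-\infty, \infty]$, and the inequality passes to the integral without further argument.
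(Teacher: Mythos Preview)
Your proof is correct and is in fact cleaner than the paper's. You use the Young-type inequality $a\log a \ge ag - e^{g-1}$ (from the Legendre duality between $a\log a - a$ and $e^g$) with $g(v) = -|v|^2/(4c)$, which yields the result in one line after integrating the Gaussian. The paper instead proceeds by a four-region decomposition of $\R^d_v$: it splits according to whether $f \ge 1$, whether $e^{-|v|^2/(4c)} \le f < 1$, or whether $f < e^{-|v|^2/(4c)}$ (the last region further split by $|v| \gtrless 2\sqrt{c}$), and handles each piece by elementary monotonicity properties of $s\mapsto s\log s$. Your approach is more conceptual and delivers an explicit constant $C_{c,d} = e^{-1}(4\pi c)^{d/2}$; the paper's is more hands-on and avoids invoking convex duality, but at the cost of a longer case analysis. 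Either way the content is the same classical lower bound, and your integrability remark at the end correctly addresses the only point that needs care.
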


We decompose the domain of $\R^d$ into four disjoint sets as:
\[
A_1:=\{f \ge 1\},\ A_2:= \{1>f \ge e^{-\frac{|v|^2}{4c}}\},\ A_3:=\{f < e^{-\frac{|v|^2}{4c}}, \,|v|\ge2\sqrt{c}\},\ A_4:=\{f < e^{-\frac{|v|^2}{4c}}, \,|v| <2\sqrt{c} \}.
\]
It is easy to note that
\[
\int_{A_1} f\log f \,\dv \ge 0,\quad \int_{A_2} f\log f\,\dv \ge \int_{A_2} -\frac{1}{4c}|v|^2f\,\dv.
\]
We then observe that the function
\[
(0,\infty)\ni s \mapsto s\log s
\]
is decreasing in $(0,e^{-1})$, increasing in $(e^{-1},\infty)$, and has minimum $-e^{-1}$. Using these, we obtain
\[
\int_{A_3} f\log f\,\dv \ge \int_{A_3} -\frac{|v|^2}{4c} e^{-\frac{|v|^2}{4c}}\,\dv = -C_{c,d}
\]
and
\[
\int_{A_4} f\log f\,\dv \ge \int_{|v|\le 2\sqrt{c}} -\frac{1}{e}\,\dv = -C_{c,d}.
\]

Let us now show the following analogue of Lemma \ref{lem:fp:uni}, which provides estimates uniform in $\ve$ and is a cornerstone for passing to the limit.
\begin{lemma}[Uniform estimates for \eqref{eq:regfp:ve}] \label{lem:fp:unive}
For the solution $f^\ve$ as constructed in Lemma \ref{lem:fp:geve}, it holds that
\begin{enumerate}
    \item[(i)] $L^p$-estimates: for all $p\in (1,\infty]$ and each $t\ge 0$,
    \begin{align*}
        \|f^\ve(t)\|_{L^p(\T^d\times\R^d)} + \kappa \left(\int_0^T \iinttr |\nabla_v (f^\ve)^{p/2}|^2 \dx\dv\dt\right)^{1/p} \le \|f_0\|_{L^p(\T^d\times\R^d)} e^{\frac{dT}{p'}},
    \end{align*}
    where the middle term is interpreted as zero when $p=\infty$.
    \item[(ii)] Mass conservation:
    \begin{align*}
        \|f^{\ve}(t)\|_{L^1(\T^d\times\R^d)} = \|f_0\|_{L^1(\T^d\times\R^d)} .
    \end{align*}
    \item[(iii)] Bounds on electric fields:
    \begin{align*}
        \sup_{0\le t \le T} \|E_\ve[f^\ve]\|_{L^2(\T^d)} \le C_{f_0,T}, \quad \sup_{0\le t \le T} \|E_\ve[f^\ve]f^\ve\|_{L^2(\T^d\times B_R)} \le C_{f_0,R,T}.
    \end{align*}
    \item[(iv)] Moment bounds:
    \begin{align*}
        \sup_{0\le t \le T} \iinttr |v|^2 f^\ve\,\dx\dv \le C_{f_0,T}.
    \end{align*}
    \item[(v)] Macroscopic upper bounds: for each $T>0$,
    \begin{align*}
        \sup_{0\le t \le T} \|\rho_{f^\ve}\|_{L^{\frac{d+2}{d}}(\T^d)} \le C_{f_0,T}, \quad \sup_{0\le t \le T} \|\rho_{f^\ve}u_{f^\ve}\|_{L^{\frac{d+2}{d+1}}(\T^d)} \le C_{f_0, T}.
    \end{align*}
    \item[(vi)] Bounds on alignment terms:
    \begin{align*}
        &\sup_{0\le t \le T} \|u_{f^\ve}^{(\ve)} f^\ve\|_{L^2(\T^d\times\R^d)} \le \sup_{0\le t \le T} \|u_{f^\ve}f^\ve\|_{L^2(\T^d\times\R^d)} \le  C_{f_0,T},\\
        &\sup_{0\le t \le T} \|vf^\ve\|_{L^2(\T^d\times\R^d)} \le C_{f_0,T}.
    \end{align*}
    \item[(vii)] Equicontinuity: for any $0\le t_1 \le t_2 \le T$ and each $R>0$:
    \begin{align*}
        \|f^\ve(t_1)-f^\ve(t_2)\|_{W^{-2,2}(\T^d\times B_R)} \le C_{f_0,R,T}|t_1-t_2|.
    \end{align*}
\end{enumerate}
\end{lemma}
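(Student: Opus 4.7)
My plan is to derive every bound from two uniform-in-$\ve$ ingredients: the kinetic entropy inequality \eqref{eq:fp:kei:ve} supplied by Lemma \ref{lem:fp:geve}, and the $L^p$-energy estimate obtained by testing a power of $f^\ve$ against \eqref{eq:regfp:ve}. The two $\ve$-regularizations were designed precisely to preserve the symmetry that yields \eqref{eq:fp:kei:ve}; beyond that, they play no essential role in the manipulations to follow, so the arguments are essentially $\ve$-independent transcriptions of those in Lemma \ref{lem:fp:uni}.

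First I would prove (i) and (ii) by testing $p(f^\ve)^{p-1}$, respectively $1$, against the weak formulation of \eqref{eq:regfp:ve}. Since $u_{f^\ve}^{(\ve)}$ and $E_\ve[f^\ve]$ are both $v$-independent, their contributions either vanish after integration by parts in $v$ or combine with the transport term $\nabla_v\cdot(vf^\ve)$ to produce the clean factor $d(p-1)/p$; the diffusive part yields the non-positive term $\kappa(p-1)\iinttr(f^\ve)^{p-2}|\nabla_v f^\ve|^2\,\dx\dv$, which equals $\tfrac{4\kappa(p-1)}{p^2}\iinttr|\nabla_v(f^\ve)^{p/2}|^2\,\dx\dv$. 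Gr\"onwall then gives the claim with constant $e^{dt/p'}$; the case $p=\infty$ follows by sending $p\to\infty$, and mass conservation is the case $p=1$.

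Next I would address the second-moment bound (iv), which is the linchpin of the remaining estimates. Applying Lemma \ref{lem:flogf:ctrl} with $c=2$ to the entropy functional inside \eqref{eq:fp:kei:ve} dominates the negative part of $\kappa f^\ve\log f^\ve$ by $\tfrac14\iinttr|v|^2 f^\ve\,\dx\dv$, leaving
\[
\tfrac{1}{4}\iinttr|v|^2 f^\ve\,\dx\dv \le \calE^{(\ve)}[f_0]+C_{\kappa,d}(1+\|f_0\|_{L^1}).
\]
The only non-routine step, and the main obstacle I foresee, is verifying that $\calE^{(\ve)}[f_0]$ is uniformly bounded as $\ve\to 0$. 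The kinetic part of $\calE^{(\ve)}[f_0]$ is $\ve$-independent, while by Young's convolution inequality $\theta^\ve * \rho_{f_0}$ is uniformly bounded in $L^{(d+2)/d}(\T^d)$ thanks to the interpolation between $\|f_0\|_{L^\infty}$ and $\iinttr|v|^2 f_0\,\dx\dv$. Lemma \ref{lem: poi est} then gives uniform $H^1$-bounds on $\Phi_\ve[f_0]$, and Proposition \ref{thm : existence and uniqueness} furnishes the uniform bound $\|e^{\Phi_\ve[f_0]}\|_{L^{(d+2)/d}}\le\|\theta^\ve*\rho_{f_0}\|_{L^{(d+2)/d}}$, which together control the potential part of $\calE^{(\ve)}[f_0]$.

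Once (i), (ii), (iv) are in hand, the remainder is short. For (iii), the $L^2$-bound on $\nabla\Phi_\ve[f^\ve]$ is read off \eqref{eq:fp:kei:ve}, Young's convolution inequality transfers it to $E_\ve[f^\ve]=\theta^\ve*\nabla\Phi_\ve[f^\ve]$, and $\|E_\ve[f^\ve]f^\ve\|_{L^2(\T^d\times B_R)}$ follows by pairing with the $L^\infty$-bound from (i). For (v) I would use the classical interpolation $\|\rho\|_{L^{(d+2)/d}}\lesssim\|f\|_{L^\infty}^{2/(d+2)}(\iinttr|v|^2 f)^{d/(d+2)}$ and the analogous one for $\rho u$ obtained from the Cauchy--Schwarz bound $|\rho u|\le\rho^{1/2}(\intr|v|^2 f\,\dv)^{1/2}$. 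For (vi), the pointwise inequality $|u_{f^\ve}^{(\ve)}|\le|u_{f^\ve}|$ together with Cauchy--Schwarz gives
\[
\intr |u_{f^\ve}^{(\ve)}|^2(f^\ve)^2\,\dv\le\|f^\ve(x,\cdot)\|_{L^\infty}|u_{f^\ve}|^2\rho_{f^\ve}\le\|f^\ve\|_{L^\infty}\intr|v|^2 f^\ve\,\dv,
\]
and an identical manipulation handles $\|vf^\ve\|_{L^2}$. Finally, for (vii), I would test the weak formulation of \eqref{eq:regfp:ve} against $\phi\in W^{2,2}_0(\T^d\times B_R)$ with $\|\phi\|_{W^{2,2}}\le 1$, and use Cauchy--Schwarz with (i), (iii), (vi) to bound each of the five resulting source terms by a constant $C_{f_0,R,T}$ independent of $t$; integration over $[t_1,t_2]$ then yields the claimed Lipschitz-in-time estimate.
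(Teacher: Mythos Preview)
Your proposal is correct and follows essentially the same route as the paper: the entropy inequality \eqref{eq:fp:kei:ve} combined with Lemma \ref{lem:flogf:ctrl} yields (iii)--(iv), the $L^p$-estimate gives (i), and (v)--(vii) follow by the same interpolation and duality arguments you outline. The only minor difference is that the paper obtains (i) and (ii) by lower semicontinuity from the $\delta$-level estimates \eqref{eq:fp:lp}--\eqref{eq:fp:mc1} rather than by redoing the computation on $f^\ve$ directly; your explicit verification that $\calE^{(\ve)}[f_0]$ is uniformly bounded in $\ve$ is a detail the paper leaves implicit.
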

\begin{proof}
(i) simply carries over from \eqref{eq:fp:lp}, due to the lower semicontinuity of $L^p$-norms. (ii) follows from \eqref{eq:fp:mc1}, since we showed in the proof of Lemma \ref{lem:fp:geve} that $f^{\ve,\delta}(t) \weakto f^\ve(t)$ weakly in $L^1(\T^d\times\R^d)$ for each $t\ge 0$, whereas clearly $f^{(\delta,0)}\to f_0$ strongly in $L^1(\T^d\times\R^d)$.

For (iii) and (iv), we recall Lemma \ref{lem:flogf:ctrl} that the Boltzmann entropy admits a lower bound which is comparable with the second moments. Hence, \eqref{eq:fp:kei:ve} provides uniform bounds for both $\iinttr|v|^2 f^\ve$ and $\|E_\ve[f^\ve]\|_{L^2}$. Using the latter along with (i), the remaining assertion of (iii) is easy to obtain.

Then, we obtain (v) by interpolating (i) and (iv), for which we again refer to \cite[Lemma 5.2]{griffinpickeringiacobelli2021a}. The bounds in (vi) are straightforward to obtain by utilizing (i), (iv), and the definition of $u_{f^\ve}^{(\ve)}$ as such:
\begin{align*}
    \iinttr |u_{f^\ve}^{(\ve)}|^2 (f^\ve)^2\,\dx\dv &\le \|f^\ve\|_{L^\infty} \iinttr |u_{f^\ve}|^2 f^\ve\,\dx\dv \\
    &= C_T \intt \rho_{f^\ve} |u_{f^\ve}|^2\,\dx \\
    &\le C_T \iinttr |v|^2 f^\ve\,\dx\dv.
\end{align*}
The final inequality of the above is due to the Cauchy--Schwarz inequality. Similarly
\begin{align*}
    \iinttr |v|^2 (f^\ve)^2\,\dx\dv \le \|f^\ve\|_{L^\infty} \iinttr|v|^2 f^\ve\,\dx\dv \le C_T \iinttr|v|^2 f^\ve\,\dx\dv,
\end{align*}
and so we deduce (vi).

Utilizing all of the above, we can obtain (vii) via the same way as in Lemma \ref{lem:fp:uni}, by taking $\phi\in W^{2,2}_0(\T^d\times B_R)$ as a test function in \eqref{eq:regfp:ve}, then using the uniform estimates established so far to obtain a bound for
\[
\sup_{t\in [0,T]}\sup_{ \substack{ \phi \in W^{2,2}_0(\T^d\times B_R) \\ \|\phi\|_{ W^{2,2}(\T^d\times B_R)}\le 1 } } \left(\ddt \iint_{\T^d\times B_R} f^\ve(t) \phi(x,v)\dx\dv \right).
\]
This completes the proof.
\end{proof}

Next, we utilize the uniform estimates of Lemma \ref{lem:fp:unive} in order to pass to the limit $\ve \to 0$.

\begin{lemma}
    Let $\{f^\ve\}$ denote the family of solutions to \eqref{eq:regfp:ve} as constructed in Lemma \ref{lem:fp:geve}, with initial data $f_0$. Then there is a subsequence $\ve_k$ such that $\left\{f^{\ve_k}\right\}_k$ converges as a sequence in $C([0,\infty);W_{\rm loc}^{-2,2}(\T^d\times\R^d))$, locally uniformly in time, to a limit $f(t)$:
    \begin{align*}
        \lim_{k\to\infty}\sup_{0\le t \le T}\left|\iinttr (f^{\ve_k}-f)\phi(x,v)\,\dx\dv\right| = 0, \quad \forall\, T>0, \quad \forall\, \phi \in C_c^1(\T^d\times \R^d).
    \end{align*}
    Moreover, we have that
    \begin{align*}
        &f^{\ve_k}(t) \weakto f(t) \quad \text{weakly in}\quad L^p(\T^d\times\R^d) \quad \forall\, t \ge 0, \quad \forall\, p\in[1,\infty),\\
        &f^{\ve_k}(t) \weakto f(t) \quad \text{weakly$^*$ in}\quad L^\infty(\T^d\times\R^d) \quad \forall\, t \ge 0,
    \end{align*}
    the limit $f$ satisfies the mass conservation
    \begin{align*}
    \|f(t)\|_{L^1(\T^d\times\R^d)} = \|f_0\|_{L^1(\T^d\times\R^d)} \quad \forall t\ge 0,  
    \end{align*}
    and is a solution to \eqref{main eq} in the sense of Theorem \ref{thm:gws}. 
\end{lemma}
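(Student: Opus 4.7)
The plan is to proceed in four steps: (i) extract a subsequence of $\{f^\ve\}$ compact in $C([0,\infty); W^{-2,2}_{\rm loc}(\T^d\times\R^d))$ by Arzel\`a--Ascoli, (ii) upgrade to pointwise-in-time weak convergence in every $L^p$ by a unique-limit argument, (iii) pass to the limit in the weak formulation, and (iv) verify the kinetic entropy inequality \eqref{eq : KEI} by lower semicontinuity. Step (i) is essentially identical to the $\delta\to 0$ argument of Lemma \ref{lem:fp:geve}: the equicontinuity estimate (vii) of Lemma \ref{lem:fp:unive}, together with the compact embedding $L^2 \hookrightarrow W^{-2,2}$ on bounded sets and a diagonal argument in $R$ and $T$, yields a subsequence $\{f^{\ve_k}\}$ converging in $C([0,\infty); W^{-2,2}_{\rm loc}(\T^d\times\R^d))$ to some limit $f$. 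For step (ii), the uniform bound (i) and moment bound (iv) from Lemma \ref{lem:fp:unive} allow, at each fixed $t$, extraction of a weak$^*$-convergent $L^\infty$-subsequence; uniqueness of the distributional limit forces this weak$^*$-limit to coincide with $f(t)$, promoting the convergence to the full subsequence $\{f^{\ve_k}(t)\}$ at every $t \ge 0$. Mass conservation then follows since the uniform second-moment bound makes the $v$-decay tight.

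For step (iii), the linear transport contributions pass by weak $L^p$-convergence of $f^{\ve_k}$, and when $\kappa>0$ the diffusion term $\kappa\nabla_v f^{\ve_k}$ passes by the weak $L^2$-compactness of $\nabla_v f^{\ve_k}$ supplied by (i). For the Poisson term, I would invoke the velocity-averaging lemma of \cite{karpermellettrivisa2013}, combined with bounds (i), (iv), (iii), and (vi), to obtain relative compactness of the macroscopic moments $\{\rho_{f^{\ve_k}}\}$ and $\{\rho_{f^{\ve_k}} u_{f^{\ve_k}}\}$ in $L^1_{\rm loc}((0,T)\times\T^d)$; interpolation with (v) upgrades this to strong $L^p$-convergence for some $p > 1$. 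Applying Lemma \ref{lem : stability for Lp} then yields $\nabla\Phi_{\ve_k}[f^{\ve_k}] \to \nabla\Phi[f]$ in $L^2((0,T)\times\T^d)$, and since the outer mollification $\theta^{\ve_k}\ast$ is harmless in the limit, $E_{\ve_k}[f^{\ve_k}]\, f^{\ve_k}$ passes to $-\nabla\Phi[f]\, f$ distributionally.

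The main obstacle is identification of the limit of the alignment term $u_{f^{\ve_k}}^{(\ve_k)} f^{\ve_k}$ as $u_f f$ (with the convention $u_f f := 0$ where $\rho_f = 0$). The strong $L^p_{\rm loc}$-convergence of $(\rho_{f^{\ve_k}}, \rho_{f^{\ve_k}} u_{f^{\ve_k}})$ from step (iii) gives a.e.\ convergence of $u_{f^{\ve_k}}^{(\ve_k)} \to u_f$ on the set $\{\rho_f > 0\}$; on $\{\rho_f = 0\}$ one must use that $f(x,v) = 0$ for a.e.\ $v$, combined with the pointwise domination $|u_{f^{\ve_k}}^{(\ve_k)} f^{\ve_k}| \le |u_{f^{\ve_k}} f^{\ve_k}|$ and the uniform $L^2$-bound from (vi), to extract a weak $L^2$-limit of $u_{f^{\ve_k}}^{(\ve_k)} f^{\ve_k}$ and identify it with $u_f f$ by splitting the integration over $\{\rho_f > 0\}$ and $\{\rho_f = 0\}$ and applying Vitali's theorem on the former.

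Finally, for step (iv), the electrostatic piece of the regularized energy $\calE^{(\ve)}[f^{\ve_k}]$ is lower semicontinuous under the $L^p$-weak convergence of $\rho_{f^{\ve_k}}$ thanks to Lemma \ref{lem:wsta} (the outer mollification in the Poisson--Boltzmann equation causes no obstruction in the limit), while the kinetic part is lower semicontinuous under weak $L^1$-convergence once the negative part of $f \log f$ is controlled by $|v|^2 f$ via Lemma \ref{lem:flogf:ctrl}. The dissipation term rewrites as a jointly convex integrand in $(f, \nabla_v f, u_{f^{\ve_k}}^{(\ve_k)} f^{\ve_k})$; when $\kappa>0$, the identification $\nabla_v\sqrt{f^{\ve_k}} \weakto \nabla_v\sqrt{f}$ requires the strong compactness result of \cite{sampaio2024} cited in the introduction, while for $\kappa = 0$ the dissipation is algebraic in $(f, u_f f)$ and no further compactness is needed. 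Passing to the limit in \eqref{eq:fp:kei:ve} then yields \eqref{eq : KEI}, completing the proof.
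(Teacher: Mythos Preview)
Your outline is correct and tracks the paper's argument closely in steps (i), (ii), and (iv); in particular, your treatment of the electrostatic energy via Lemma \ref{lem:wsta}, the kinetic entropy via Lemma \ref{lem:flogf:ctrl}, and the dissipation via Sampaio's compactness when $\kappa>0$ matches what the paper does.

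The only real point of divergence is the identification of the limit of $u_{f^{\ve_k}}^{(\ve_k)} f^{\ve_k}$, and here your sketch is under-specified in a way that matters. Vitali's theorem cannot be applied directly to $u_{f^{\ve_k}}^{(\ve_k)} f^{\ve_k}$ on $\{\rho_f>0\}\times\R^d$: you have a.e.\ convergence of $u_{f^{\ve_k}}^{(\ve_k)}$ (a function of $(t,x)$ only), but $f^{\ve_k}$ converges only weakly in $v$, so the product has no pointwise limit. What does work is to integrate out $v$ against the test function first, setting $\psi^{\ve}(t,x):=\int f^{\ve}\nabla_v\varphi\,\dv$, and then apply Vitali in $(t,x)$ to $u_{f^{\ve_k}}^{(\ve_k)}\psi^{\ve_k}$; the domination $|u_{f^{\ve_k}}^{(\ve_k)}\psi^{\ve_k}|\le C|\rho_{f^{\ve_k}}u_{f^{\ve_k}}|$ together with the strong $L^1$ convergence of $\rho_{f^{\ve_k}}u_{f^{\ve_k}}$ from velocity averaging supplies the uniform integrability. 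On $\{\rho_f=0\}$ the observation that $f=0$ only handles the target $u_f f$; to show the contribution of $u_{f^{\ve_k}}^{(\ve_k)}f^{\ve_k}$ vanishes there you still need the Cauchy--Schwarz estimate $\int_{\{\rho_f=0\}}|\rho_{f^\ve}u_{f^\ve}|\le (\int_{\{\rho_f=0\}}\rho_{f^\ve})^{1/2}(\int\rho_{f^\ve}|u_{f^\ve}|^2)^{1/2}\to 0$.

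The paper takes a different route for this term: it splits at a quantitative level $\{\rho_f\le\eta\}$ versus $\{\rho_f>\eta\}$, uses Egorov's theorem on the latter to upgrade the a.e.\ convergence of $u_{f^\ve}^{(\ve)}$ to uniform convergence off a small set $B_\eta$, and then telescopes $u_{f^\ve}^{(\ve)}f^\ve - u_f f=(u_{f^\ve}^{(\ve)}-u_f)f^\ve + u_f(f^\ve-f)$, handling the second piece by weak-$*$ convergence of $f^\ve$ in $L^\infty$ against $u_f\mathbf{1}_{\{\rho_f>\eta\}}\nabla_v\varphi\in L^1$. Both routes are valid; yours is arguably more direct once the integrate-out-$v$ step is made explicit, while the paper's avoids having to identify an appropriate dominating function for Vitali.
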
 
\begin{proof}
Similarly as in the proof of Lemma \ref{lem:fp:geve}, we utilize the time-equicontinuity of $f^\ve$ to deduce that, modulo a subsequence (which is independent of $t$), we have for each $t\ge 0$ that
\bq\label{eq:fp:ft:wc}
    f^{\ve}(t) \weakto f(t) \quad \text{weakly in }L^p(\T^d\times\R^d), \quad \text{weakly$^*$ in }L^\infty(\T^d\times\R^d), \quad \forall \, t\in [0,T], \ \forall \, p \in [1,\infty),
\eq
and
\begin{equation}\label{eq: limit f}
f^\ve \to f \quad \text{in} \quad C([0,T];W_{\rm loc}^{-2,2}), \quad \forall\, T>0.
\end{equation}
The first consequence of this is the convergence of the following term from the weak formulation of \eqref{eq:regfp:ve}:
\begin{align*}
    \iinttr f^\ve(t)\phi(t,x,v)\,\dx\dv \to \iinttr f(t) \phi(t,x,v)\,\dx\dv \quad \forall\, \phi\in C_c^2([0,t]\times\T^d\times\R^d), \quad \forall\, t\ge 0.
\end{align*}
Second, we obtain the mass conservation of $f$ by considering in \eqref{eq:fp:ft:wc} the case $p=1$. Lastly, it is also standard to check using any cutoff, that \eqref{eq:fp:ft:wc} implies the lower semicontinuity of moments 
\begin{align}\label{eq: lower semcont moments}
    \iinttr |v|^2 f(t)\,\dx\dv \le \liminf_{\ve\to 0} \iinttr |v|^2 f^\ve(t)\,\dx\dv \le C_{f_0}.
\end{align}

The uniform-in-$\ve$ estimates in Lemma \ref{lem:fp:unive}, along with averaging lemmas \cite{dipernalionsmeyer, karpermellettrivisa2013}, provide that up to a further subsequence
\bq\label{eq:fp:mvstr}
\begin{split}
    &\rho_{f^\ve} \to \rho_f \quad \text{in}\quad L^q((0,T)\times \T^d) \quad \forall\, q\in \left[1,\frac{d+2}{d}\right) \quad \text{and a.e.},\\
    &\rho_{f^\ve}u_{f^\ve} \to \rho_f u_f \quad \text{in}\quad L^r((0,T)\times\T^d) \quad \forall\, r\in \left[1,\frac{d+2}{d+1}\right) \quad \text{and a.e.}\\
\end{split}
\eq
We note that \eqref{eq:fp:mvstr} and Proposition \ref{lem : stability for Lp} imply
\begin{align*}
    E_\ve[f^\ve] \to E[f] \quad \text{in} \quad L^2([0,T]\times\T^d).
\end{align*}
Therefore, in passing to the limit in the other terms of the weak formulation of \eqref{eq:regfp:ve}, the only remaining difficulty is to establish
    \begin{align}
    {u^{(\ve)}_{f^\ve}} f^\ve \to u_f f \quad \text{in}\quad \calD'([0,\infty)\times\T^d\times\R^d) \label{eq FP : conv of uf}.
\end{align}

Let us enter the proof of \eqref{eq FP : conv of uf}. Fix any $T>0$ and small $\eta>0$. We split the domain $[0,T]\times\T^d$ into $\{(t,x):\rho_f \le \eta\}$ and $\{(t,x):\rho_f > \eta\}$. For the former, we observe that
\begin{align*}
\iiint_{\{\rho_f \le \eta\} \times \R^d } |u_f f| \,\dx\dt\dv &= \iint_{\{\rho_f \le \eta\}} |\rho_f u_f|\,\dx\dt \\
& \le \lt(\iint_{\{\rho_f \le \eta\} } \rho_f \,\dx\dt \rt)^{1/2} \lt(\int_0^T \intt \rho_f |u_f|^2\,\dx\dt \rt)^{1/2} \\
&\le \lt(\iint_{\{\rho_f \le \eta\} } \rho_f \,\dx\dt \rt)^{1/2} \left(\int_0^T\iinttr |v|^2 f\,\dx\dv\dt\right)^{1/2} \le C_T\sqrt{\eta}. 
\end{align*}
Similarly,
\begin{align*}
\iiint_{\{\rho_f\le \eta\}\times\R^d } |u_{f^\ve}^{(\ve)} f^\ve|\dx\dt\dv &\le \iiint_{\{\rho_f \le \eta\} \times \R^d} |u_{f^\ve} f^{\ve}| \,\dx\dt\dv \\
&= \iint_{\{\rho_f \le \eta\}} |\rho_{f^\ve} u_{f^\ve}|\,\dx\dt \\
&\le \lt(\iint_{\{\rho_f \le \eta\}} \rho_{f^\ve} \,\dx\dt \rt)^{1/2} \lt(\int_0^T \intt \rho_{f^\ve} |u_{f^\ve}|^2\,\dx\dt \rt)^{1/2} \\
&\le \lt(\iint_{\{\rho_f \le \eta\}} \rho_{f^\ve} \,\dx\dt \rt)^{1/2} \left(\int_0^T \iinttr |v|^2 f^\ve\,\dx\dv\dt\right)^{1/2}.
\end{align*}
Hence, applying the strong convergence $\eqref{eq:fp:mvstr}_1$ to the above inequality, we deduce
\[
\limsup_{\ve \to 0}\iiint_{\{\rho_f \le \eta\} \times \R^d} |u_{f^\ve}^{(\ve)} f^{\ve}| \,\dx\dv \dt \le C_T\sqrt{\eta},
\]
and altogether we get
\begin{align}\label{eq: sqrt eta}
    \|u^{(\ve)}_{f^\ve} f^\ve - u_f f\|_{L^1(\{\rho_f \le \eta\}\times\R^d)} \le C_T \sqrt{\eta}.
\end{align}

Next, let us discuss convergence on $\{(t,x):\rho_f > \eta\}\times \R^d$. We proceed as in \cite{carrillochoijung2021, karpermellettrivisa2013}, utilizing \eqref{eq:fp:mvstr} and Egorov's theorem to find a set $B_\eta$ for which
\begin{align*}
    &B_\eta \subset \{(t,x):\rho_f > \eta\},\quad  |B_\eta|\le \eta^{\frac{d+2}{2}},\\
    &\textnormal{$u_{f^\ve}^{(\ve)}\to u_f$ uniformly in $\{\rho_f>\eta\}\setminus B_\eta$}.
\end{align*}
On $B_\eta\times\R^d$ we have the following quick estimates:
\begin{align*}
    \iiint_{B_\eta\times \R^d} |u_{f^\ve}^{(\ve)}f^\ve - u_f f|\,\dx\dv\dt &\le \iint_{B_\eta} |\rho_{f^\ve}u_{f^\ve}| + |\rho_f u_f |\,\dx\dt \\
    &\le \Big(\|\rho_{f^\ve}u_{f^\ve}\|_{L^{\frac{d+2}{d+1}}} + \|\rho_f u_f\|_{L^{\frac{d+2}{d+1}}}\Big) |B_\eta|^{\frac{1}{d+2}} \\
    &\le C\sqrt{\eta},
\end{align*}
thanks to H\"older's inequality and the fact that $\{\rho_{f^\ve}u_{f^\ve}\}$ is bounded in $L^{\frac{d+2}{d+1}}([0,T]\times\T^d)$ (see Lemma \ref{lem:fp:unive} (v)). 

Now toward the aim of obtaining convergence on $(\{\rho_f > \eta\}\setminus B_\eta)\times\R^d$, we telescope the difference in \eqref{eq FP : conv of uf} as
\begin{equation}\label{eq: telescope uf}
u^{(\ve)}_{f^\ve} f^\ve - u_f f = (u^{(\ve)}_{f^\ve} -u_f)f^\ve +  u_f(f^\ve -f).    
\end{equation}
It is clear that
\begin{align*}
    \|(u_{f^\ve}^{(\ve)} - u_f) f^\ve\|_{L^1((\{\rho_f>\eta\} \setminus B_\eta) \times \R^d) } \le \|u_{f^\ve}^{(\ve)} - u_f\|_{L^\infty(\{\rho_f>\eta\}\setminus B_\eta) } \|\rho_{f^\ve}\|_{L^1(\{\rho_f>\eta\} \setminus B_\eta) } \to 0.
\end{align*}

To control the second term on the right-hand side of \eqref{eq: telescope uf}, let $\phi \in C_c^\infty([0,\infty)\times \T^d \times \R^d)$ be given. Assume that $T>0$ is large enough, and choose $R>0$ so that it holds:
\[
\text{supp} (\phi) \subset [0,T] \times \T^d \times B_R.
\]
We observe that from Lemma \ref{lem:fp:unive}, \eqref{eq: limit f}, and the uniqueness of weak limits, it holds that
\[
f^\ve \weakto f \quad \text{weakly* in } L^\infty([0,T]\times \T^d \times \R^d).
\]
Since
\[
u_f \mathbf{1}_{\rho_f >\eta} \mathbf{1}_{|v| \le R} \in L^1([0,T]\times \T^d \times \R^d),
\]
we find from the definition of weak$^*$ convergence
\[
\int_0^T\iint_{\{\rho_f > \eta\} \times \R^d} u_f (f^\ve - f) \phi\,\dx\dv\dt \to 0.
\]
We conclude that \eqref{eq: telescope uf} tends to zero as $\ve \to 0$ on $(\{\rho_f>\eta\}\setminus B_\eta) \times \R^d$. Combining this with the estimate \eqref{eq: sqrt eta}, we deduce \eqref{eq FP : conv of uf} since our choice of $\eta>0$ was arbitrary.
\end{proof}

%
%
%
%
%

From the previous lemma we now have that the limit $f$ is indeed a weak solution to \eqref{main eq}. The proof of Theorem \ref{thm:gws} is completed once we verify that the kinetic entropy inequality \eqref{eq : KEI} holds for $f$. We verify now by showing that all terms arising in \eqref{eq:fp:kei:ve} either converge or attain lower semicontinuity at the limit.

Let us recall the kinetic entropy inequality here, for the reader's convenience. The inequality \eqref{eq:fp:kei:ve} states that
\begin{align}
&\iint_{\T^d\times\R^d} \frac{1}{2}|v|^2f^\e + \kappa f^\e \log f^\e \,\dx\dv + \int_{\T^d} \frac{1}{2}|\nabla\Phi_\e[f^\e]|^2 + (\Phi_\ve[f^\ve]-1)e^{\Phi_\ve[f^\ve]} \,\dx \label{eq : regkei, 1} \\
&\quad +\int_0^t \iint_{\T^d \times \R^d}\frac{1}{f^\e} \lt|\kappa\nabla_v f^\e -({u^{(\ve)}_{f^\ve}} -v)f^\e\rt|^2 \,\dx\dv\ds \label{eq : regkei, 2}\\
&\qquad \le \iint_{\T^d\times\R^d} \frac{1}{2}|v|^2f_0 + \kappa f_0 \log f_0 \,\dx\dv + \int_{\T^d}  \frac{1}{2}|\nabla\Phi_\e[f_0]|^2 + (\Phi_\ve[f_0]-1)e^{\Phi_\ve[f_0]} \,\dx. \label{eq: regkei rhs}
\end{align}
We check each line separately in the limit $\ve\to 0$, passing to subsequences if necessary. Our claim is basically that the right-hand side \eqref{eq: regkei rhs} is convergent, whereas \eqref{eq : regkei, 1}--\eqref{eq : regkei, 2} are lower semicontinuous. Notice that these assertions prove \eqref{eq : KEI}.

%
%
%
%
%

\subsubsection{Convergence of \eqref{eq: regkei rhs}}

Only the last integral of \eqref{eq: regkei rhs} is dependent on $\ve$. Recall that the strong stability of this energy term is provided in Lemma \ref{lem : conv of energy}. It suffices to check that $\theta^\ve * \rho_{f_0} \to \rho_{f_0}$ in $L^p(\T^d)$ for some $p>1$, which is clear by standard properties of the mollifier and the fact that $\rho_{f_0} \in L^{\frac{d+2}{d}}(\T^d)$.

%
%
%
%
%

\subsubsection{Lower semicontinuity of \eqref{eq : regkei, 1}}

From the weak $L^1$-convergence of $f^\ve$ asserted in \eqref{eq:fp:ft:wc}, we can use $\varphi = \varphi(x) \in L^\infty(\T^d\times\R^d)$ as a test function to deduce that for each $t\ge 0$, $\rho_{f^\ve}(t) \weakto \rho_f(t)$ weakly in $L^1(\T^d)$. The bounds for $\|\rho_{f^\ve}(t)\|_{L^{\frac{d+2}{d}}}$ in Lemma \ref{lem:fp:unive} and the uniqueness of weak limits imply further that
\begin{align*}
    \rho_{f^\ve}(t) \weakto \rho_f(t) \quad \text{weakly in }L^p(\T^d), \quad \forall t \ge 0, \quad \forall p\in \left[1,\frac{d+2}{d}\right].
\end{align*}
With the aid of Lemma \ref{lem:wsta}, we deduce
\begin{align} \label{eq: conv pot energy}
\intt \frac{1}{2}|\nabla\Phi[f]|^2 + (\Phi[f]-1)e^{\Phi[f]}\,\dx \le \lim_{\ve \to 0} \intt \frac{1}{2}|\nabla\Phi_\ve[f^\ve]|^2 + (\Phi_\ve[f^\ve]-1)e^{\Phi_\ve[f^\ve]}\,\dx.
\end{align}

If $\kappa=0$, then lower semicontinuity of \eqref{eq : regkei, 1} thus follows immediately from \eqref{eq: conv pot energy} and the lower semicontinuity of moments \eqref{eq: lower semcont moments}. In the case $\kappa\ne 0$, we need to additionally deduce the lower semicontinuity of $\kappa\iinttr f^\ve\log f^\ve\dx\dv$. This assertion is well illustrated in say \cite{AFP2000, FigalliGlaudo2023, JKO1998}, so we only sketch the ideas. Since $f^\ve(t) \to f(t)$ weakly in $L^1(\T^d\times\R^d)$, the convexity of $z\mapsto z\log z$ shows that the entropy is lower semicontinuous on sets of finite measure:
\begin{align*}
    \iint_{\T^d\times B_R} f\log f\,\dx\dv \le \liminf_{\ve\to 0} \iint_{\T^d\times B_R} f^\ve \log f^\ve \,\dx\dv, \quad \forall R>0.
\end{align*}
On the other hand, the map $z\mapsto \max\{0,z\log z\}$ is convex and nonnegative, thus
\begin{align*}
    \iint_{\T^d\times (\R^d\setminus B_R)} \max\{0,f\log f\} \,\dx\dv \le \liminf_{\ve\to 0} \iint_{\T^d\times (\R^d\setminus B_R)} \max\{0,f^\ve\log f^\ve\}\,\dx\dv.
\end{align*}
Utilizing from Lemma \ref{lem:fp:unive} the uniform bounds for the moments of $f^\ve$, we can obtain the estimate of \cite[P.7]{JKO1998}, which gives
\begin{align*}
    \lim_{R\to\infty} \sup_{\ve>0} \iint_{\T^d\times (\R^d\setminus B_R)} \min\{0, f^\ve \log f^\ve\}\,\dx\dv = 0.
\end{align*}
Combining the above, we deduce
\begin{align*}
    \iinttr f\log f\,\dx\dv \le \liminf_{\ve\to 0} \iinttr f^{\ve}\log f^\ve\,\dx\dv.
\end{align*}
In conclusion, we have shown lower semicontinuity of \eqref{eq : regkei, 1}:
\begin{equation}
\begin{split}
\label{eq : low sem cont 1}
    &\iint_{\T^d\times\R^d} \frac{1}{2}|v|^2 f + \kappa f\log f \,\dx\dv + \int_{\T^d}\frac{1}{2}|\nabla\Phi[f]|^2 + (\Phi[f]-1)e^{\Phi[f]} \,\dx  \le \liminf_{\ve\to 0}\eqref{eq : regkei, 1}.
\end{split}
\end{equation}

%
%
%
%
%
\subsubsection{Lower semicontinuity of \eqref{eq : regkei, 2}} \label{sec: lower semcont} We begin by rewriting the dissipation rate in \eqref{eq : regkei, 2}:
\begin{equation*}
\begin{split}
&\int_0^t\iinttr \kappa\frac{|\nabla_v f^\ve|^2}{f^\ve} - \kappa d f^\ve+|{u^{(\ve)}_{f^\ve}}-v|^2f^\ve \,\dx\dv\ds \\
&\quad = \int_0^t \iinttr 4\kappa|\nabla_v \sqrt{f^\ve}|^2 + |{u^{(\ve)}_{f^\ve}} - v|^2 f^\ve \,\dx\dv \ds - \kappa d \|f_0\|_{L^1}t.
\end{split}
\end{equation*}
Similarly, we get
\begin{align*}
&\int_0^t\iinttr \kappa\frac{|\nabla_v f|^2}{f} - \kappa d f +|u_f-v|^2 f \,\dx\dv\ds \\
&\quad = \int_0^t \iinttr 4\kappa|\nabla_v \sqrt{f}|^2 + |u_f - v|^2 f \,\dx\dv \ds - \kappa d \|f_0\|_{L^1}t.
\end{align*}
Thus, to prove the lower semicontinuity of \eqref{eq : regkei, 2}, it suffices to show that
\begin{align}
    &\int_0^t \iinttr |u_f - v|^2 f \,\dx\dv\ds \le \liminf_{\ve\to 0} \int_0^t \iinttr |u_{f^\ve}^{(\ve)} - v|^2 f^\ve\,\dx\dv\ds,   \label{eq : kei 2 ETS2}\\
&\int_0^t \iinttr 4\kappa |\nabla_v \sqrt{f}|^2\,\dx\dv\ds \le \liminf_{\ve\to 0} \int_0^t \iinttr 4\kappa|\nabla_v \sqrt{f^\ve}|^2  \,\dx\dv \ds. \label{eq: kei 2 ETS3}
\end{align}
Notice from the Cauchy--Schwarz and arithmetic geometric inequalities that
\begin{align*}
    \int_0^t \iinttr |u_{f^\ve}^{(\ve)} - v|^2 f^\ve\,\dx\dv\ds &= \int_0^t \intt \left(\intr |v|^2 f^\ve \dv\right) +  |u_{f^\ve}^{(\ve)}|^2 \rho_{f^\ve} - 2\rho_{f^\ve} u_{f^\ve} \cdot u_{f^\ve}^{(\ve)} \, \dx\ds \\
    &\ge \int_0^t \intt \left(\intr |v|^2 f^\ve \dv - \rho_{f^\ve} |u_{f^\ve}|^2 \right)\, \dx\ds.
\end{align*}
Observe next that the integrand of the right-hand side, as a function of $s,x$, is non-negative. We take the $\liminf$ of both sides of the above, then apply Fatou's lemma to the right-hand side, recalling \eqref{eq:fp:mvstr}:
\begin{align*}
    \liminf_{\ve\to 0} \int_0^t \iinttr |u_{f^\ve}^{(\ve)}-v|^2 f^\ve\,\dx\dv\ds &\ge \int_0^t \int_{\T^d} \liminf_{\ve \to 0} \left( \int_{\R^d}|v|^2 f^\ve \,\dv - \rho_{f^\ve}|u_{f^\ve}|^2\right)\,\dx\ds \\
    &\ge \int_0^t \intr \left(\intr |v|^2 f\dv - \rho_f|u_f|^2 \right)\,\dx\ds \\
    &= \int_0^t \iinttr |u_f - v|^2 f\,\dx\dv\ds.
\end{align*}
This proves $\eqref{eq : kei 2 ETS2}$ for all $\kappa\ge 0$. 

Next we prove \eqref{eq: kei 2 ETS3}. When $\kappa=0$, there is nothing to prove, so we let $\kappa>0$. In this case, let us observe the enhanced regularity that is provided by Lemma \ref{lem:fp:unive} (i). Said roughly, we know that the sequence $\{\nabla_v (f^\ve)^{p/2}\}$ is weakly compact in $L^2((0,T)\times\T^d\times\R^d)$ for any value of $p\in[1,\infty)$. Since the velocity averaging lemma provides compactness in the $t,x$ variables, whereas for each $t,x$ we have compactness in the $v$ variable (thanks to the $H^1(\R^d_v)$ bounds on $f^\ve$), we deduce the compactness of the sequence $\{f^\ve\}$ as a whole. This intuition is confirmed via the following result of \cite{sampaio2024}. We state their result below in a simpler version for the sake of our purposes.

\begin{lemma} \cite[Proposition 2]{sampaio2024}  \label{lem:ptwcon}
    Let $\{f^m\}$ be a sequence of functions with $f^m \weakto f$ in $L^1((0,T)\times\T^d\times\R^d)$, and weakly$^*$ in $L^\infty((0,T)\times\T^d\times\R^d)$. Suppose that there exists a constant $C>0$ such that
    \begin{align*}
        \int_0^T \iint_{\T^d\times \R^d} |\nabla_v f^m|^2\,\dx\dv\dt \le C,
    \end{align*}
    and assume also that for each $\varphi\in C_c^\infty(\R^d)$, the sequence $\left\{\int_{\R^d} f^m\varphi\,\dv\right\}_m$ is compact in $L^1((0,T)\times \T^d)$. Then up to some subsequence we have $f^m(t,x,v) \to f(t,x,v)$ almost everywhere in $[0,T]\times\T^d\times\R^d$.
\end{lemma}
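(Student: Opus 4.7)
The plan is to combine the two compactness ingredients supplied by the hypotheses---strong $L^1((0,T)\times\T^d)$ convergence of the velocity averages $\int f^m\varphi\,\dv$ for every $\varphi\in C_c^\infty(\R^d)$, and the uniform $L^2$-bound on $\nabla_v f^m$ that supplies $H^1$-regularity in the velocity variable---via mollification in $v$. Fix a standard mollifier $\theta\in C_c^\infty(\R^d)$ with $\int\theta=1$, set $\theta^\eta(v):=\eta^{-d}\theta(v/\eta)$, and define
\[
f^m_\eta(t,x,v):=\int_{\R^d}f^m(t,x,w)\,\theta^\eta(v-w)\,\dw.
\]
For each fixed $v\in\R^d$, the map $w\mapsto\theta^\eta(v-w)$ lies in $C_c^\infty(\R^d)$, so the velocity-averaging hypothesis gives $f^m_\eta(\cdot,\cdot,v)\to f_\eta(\cdot,\cdot,v)$ strongly in $L^1((0,T)\times\T^d)$ for every $v$ (uniqueness of weak/strong limits forces the full sequence to converge, not just a subsequence).

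Next I would upgrade this per-$v$ convergence to joint convergence in $(t,x,v)$ on balls $B_R\subset\R^d$. For each fixed $\eta$, the bound $|\nabla_v f^m_\eta|\le\|f^m\|_{L^\infty}\|\nabla\theta^\eta\|_{L^1}$ makes $v\mapsto f^m_\eta(\cdot,\cdot,v)$ equi-Lipschitz as a mapping into $L^1((0,T)\times\T^d)$, uniformly in $m$. Combined with the per-$v$ convergence above and the uniform $L^\infty$-bound (which provides a dominating function on bounded sets), dominated convergence yields $f^m_\eta\to f_\eta$ in $L^1_{\rm loc}((0,T)\times\T^d\times\R^d)$ for each fixed $\eta$, and interpolation with $L^\infty$ lifts this to $L^2_{\rm loc}$.

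To close, I apply the classical mollifier estimate
\[
\|f^m-f^m_\eta\|_{L^2((0,T)\times\T^d\times\R^d)}\le\eta\|\nabla_v f^m\|_{L^2}\le C\eta,
\]
which makes sense globally since $f^m\in L^1\cap L^\infty\hookrightarrow L^2$, is uniform in $m$, and holds analogously for $f$ (which inherits the $H^1_v$-bound by weak lower semicontinuity, as $\nabla_v f^m\weakto\nabla_v f$ in $L^2$). The triangle inequality
\[
\|f^m-f\|_{L^2(\Omega)}\le\|f^m-f^m_\eta\|_{L^2}+\|f^m_\eta-f_\eta\|_{L^2(\Omega)}+\|f_\eta-f\|_{L^2}
\]
on a bounded $\Omega\subset(0,T)\times\T^d\times\R^d$ then delivers $f^m\to f$ in $L^2_{\rm loc}$ by first choosing $\eta$ small and then sending $m\to\infty$. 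A diagonal subsequence over an exhaustion by bounded sets produces the claimed a.e. convergence. The main obstacle I anticipate lies in the passage from per-$v$ to joint $L^1_{\rm loc}$ convergence of the mollified sequence: naive Lebesgue differentiation in $v$ does not suffice, and it is the equi-Lipschitz continuity of $f^m_\eta$ in $v$---a consequence of the $L^\infty$-bound and the smoothness of the mollifier---that actually makes the argument rigorous.
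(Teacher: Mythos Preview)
The paper does not supply its own proof of this lemma; it is quoted verbatim from \cite[Proposition~2]{sampaio2024} and used as a black box. The paper does, however, record the heuristic immediately preceding the statement: velocity averaging yields compactness in $(t,x)$, the $H^1_v$ bound yields compactness in $v$, and the two together force compactness of the full sequence. Your argument is exactly a rigorous implementation of that heuristic---mollify in $v$ to turn the velocity-average compactness into strong $L^1_{t,x}$ convergence for each fixed $v$, use the uniform $L^\infty$ bound to pass to joint $L^1_{\rm loc}$ convergence via dominated convergence, and then peel off the mollifier using the uniform $\|\nabla_v f^m\|_{L^2}$ bound.

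Your proof is correct. A few minor remarks: the equi-Lipschitz observation you flag as the ``main obstacle'' is in fact not needed---dominated convergence in $v\in B_R$ already suffices, since $\|f^m_\eta(\cdot,\cdot,v)-f_\eta(\cdot,\cdot,v)\|_{L^1_{t,x}}$ is bounded by the constant $2T|\T^d|\sup_m\|f^m\|_{L^\infty}$, which is integrable on $B_R$. The identification of the per-$v$ limit relies on testing the weak $L^1$ convergence of $f^m$ against $\phi(t,x)\theta^\eta(v-w)\in L^\infty_{t,x,w}$, which is fine. Finally, the $L^2_{\rm loc}$ convergence you obtain is actually stronger than what is stated (a.e.\ convergence up to a subsequence), and no diagonal extraction is necessary once you have strong $L^2_{\rm loc}$ convergence of the full sequence.
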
 

Lemma \ref{lem:ptwcon} applies directly to our sequence $\{f^\ve\}$, whenever $\kappa>0$, as a result of Lemma \ref{lem:fp:unive} (i). We deduce that $f^\ve(t,x,v) \to f(t,x,v)$ almost everywhere. Proceeding further, we note that the moment bounds in Lemma \ref{lem:fp:unive} (iv) assert the tightness of $\{f^\ve\}$, while the $L^\infty$-bounds of Lemma \ref{lem:fp:unive} (i) imply that $\{f^\ve\}$ is uniformly integrable. Thus, the Vitali dominated convergence theorem shows that
\begin{align*}
    f^\ve \to f \quad \text{in} \quad L^p((0,T)\times\T^d\times\R^d), \quad \forall p\in [1,\infty).
\end{align*}
We deduce (for instance, by using the moment bounds to localize, and then owing to Egorov's theorem) that the weak limit of $\{\nabla_v \sqrt{f^\ve}\}$ in $L^2((0,T)\times\T^d\times\R^d)$ is necessarily $\nabla_v \sqrt{f}$. Then, by the lower semicontinuity of the $L^2$-norm with respect to weak convergence, we deduce \eqref{eq: kei 2 ETS3}:
\begin{align*}
    \int_0^t \iinttr 4\kappa|\nabla_v \sqrt{f}|^2 \,\dx\dv\dt \le \liminf_{\ve\to 0}\int_0^t \iinttr 4\kappa|\nabla_v \sqrt{f^\ve}|^2 \,\dx\dv\ds.
\end{align*}
Together, \eqref{eq : kei 2 ETS2}--\eqref{eq: kei 2 ETS3} show the lower semicontinuity of \eqref{eq : regkei, 2}:
\begin{align*}
    \int_0^t \iinttr \frac{|\kappa \nabla_v f - (u_f - v)f|^2}{f}\, \dx\dv\ds \le \liminf_{\ve\to 0} \eqref{eq : regkei, 2}.
\end{align*}
Combining this result with that obtained in \eqref{eq : low sem cont 1}, we deduce \eqref{eq : KEI} for all $t\in [0,T]$. As our choice of $T>0$ was arbitrary, this completes the proof of Theorem \ref{thm:gws}.


\section*{Acknowledgments}
This work is supported by NRF grant no. 2022R1A2C1002820 and RS-2024-00406821.


\appendix

\section{Quantitative error estimates}\label{app: QE}

In this appendix, we establish \eqref{eq: tauconv l2}--\eqref{eq: conv of dist g1 k=0} in Theorem \ref{thm:hdr}, relying on the modulated energy bound \eqref{eq: conv mod egy} together with the kinetic entropy inequality \eqref{eq : KEI}.  
We begin by noting that the definition of $\calF(\cdot|\cdot)$ already yields the control
\begin{align*}
    \|\nabla \Phi[f^\tau] - \nabla \Phi\|_{L^\infty(0,T;L^2)} \le C\tau^{1/4},
\end{align*} 
which directly proves \eqref{eq: tauconv l2}.

\subsection{Density error estimates: ion and electron components}
We now establish a direct link between the $L^1$-distance of macroscopic densities and the modulated energy functional introduced in \eqref{eq: first-order expansion of eta}. This relation is quantified in the following lemma, which will serve as a basic tool in deriving moment error bounds.
\begin{lemma}\label{lem:rho}
Let $P(z) = z\log z - z + 1$ and denote by $P(\cdot|\cdot)$ its first-order modulation. For any $\rho_1, \rho_2 \in L_+^1(\T^d)$, we have
\[
\|\rho_1 - \rho_2\|_{L^1} \le \sqrt{2} \lt(\intt (\rho_1 + \rho_2) \,\dx\rt)^{\frac{1}{2}} \lt(\intt P(\rho_1|\rho_2) \,\dx\rt)^{\frac{1}{2}}.
\]
\end{lemma}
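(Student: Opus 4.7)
The plan is to reduce the integral inequality to a pointwise one and then close with a Cauchy--Schwarz argument. Specifically, I will first establish the elementary pointwise bound
\[
(a-b)^2 \le 2(a+b)\bigl(a\log(a/b)-(a-b)\bigr) \qquad \text{for all } a,b>0,
\]
and then integrate.

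To prove the pointwise inequality, I would introduce $t:=a/b$ and, after dividing by $b^2$, reduce matters to showing
\[
h(t):=2(t+1)(t\log t - (t-1)) - (t-1)^2 \ge 0, \qquad t>0.
\]
Note that $h(1)=0$. A direct computation gives
\[
h'(t)=2(2t+1)\log t - 4(t-1), \qquad h''(t)=\frac{2}{t}+4\log t.
\]
Thus $h'(1)=0$, and convexity of $h$ follows from the observation that $h''(t)>0$ for all $t>0$; indeed, $h''(t)>0$ is equivalent to $1+2t\log t>0$, which holds because $t\log t\ge -1/e>-1/2$. Hence $h'$ is strictly increasing with $h'(1)=0$, so $h$ attains its minimum at $t=1$ with $h(1)=0$, which gives $h(t)\ge 0$.

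With the pointwise inequality in hand, the conclusion follows by Cauchy--Schwarz:
\[
\|\rho_1-\rho_2\|_{L^1}
= \intt |\rho_1-\rho_2|\,\dx
\le \intt \sqrt{2(\rho_1+\rho_2)}\,\sqrt{P(\rho_1|\rho_2)}\,\dx
\le \sqrt{2}\left(\intt(\rho_1+\rho_2)\,\dx\right)^{\!1/2}\!\left(\intt P(\rho_1|\rho_2)\,\dx\right)^{\!1/2}.
\]

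The only genuinely nontrivial step is verifying the scalar inequality $h(t)\ge 0$; the rest is a standard Csisz\'ar--Kullback--Pinsker-style application of Cauchy--Schwarz, and handling the set where $\rho_2=0$ is routine since there $P(\rho_1|\rho_2)=+\infty$ whenever $\rho_1>0$, so only the region $\{\rho_1=\rho_2=0\}$ contributes trivially to both sides.
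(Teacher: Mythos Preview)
Your proof is correct and follows the same overall strategy as the paper: establish a pointwise inequality of the form $(\rho_1-\rho_2)^2 \le 2(\rho_1+\rho_2)\,P(\rho_1|\rho_2)$ and then apply Cauchy--Schwarz. The only difference lies in how the pointwise bound is obtained. The paper invokes Taylor's theorem directly: since $P''(z)=1/z$, the Lagrange remainder gives $P(\rho_1|\rho_2)=\tfrac{1}{2\xi}(\rho_1-\rho_2)^2$ for some $\xi$ between $\rho_1$ and $\rho_2$, whence $P(\rho_1|\rho_2)\ge \tfrac{1}{2}\min\{1/\rho_1,1/\rho_2\}(\rho_1-\rho_2)^2$; combined with the identity $|\rho_1-\rho_2|=\max\{\rho_1,\rho_2\}^{1/2}\min\{1/\rho_1,1/\rho_2\}^{1/2}|\rho_1-\rho_2|$ and $\max\le \rho_1+\rho_2$, this yields the result in one stroke. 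Your approach instead verifies the scalar inequality $h(t)\ge 0$ by an explicit convexity computation. Both are valid; the paper's route is slightly shorter, while yours is more self-contained and gives the inequality in its final form directly.
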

\begin{proof}
We start from the identity
\[
|\rho_1 - \rho_2| = \max\{\rho_1, \rho_2\}^{\frac{1}{2}} \min\lt\{\frac{1}{\rho_1}, \frac{1}{\rho_2}\rt\}^{\frac{1}{2}}|\rho_1 - \rho_2|.
\]
Applying H\"older's inequality yields
\[
\begin{split}
\intt |\rho_1 - \rho_2|\,\dx &\le \lt(\intt \max\{\rho_1, \rho_2\} \,\dx\rt)^{\frac{1}{2}}\lt(\intt \min\lt\{\frac{1}{\rho_1}, \frac{1}{\rho_2}\rt\} |\rho_1 - \rho_2|^2 \,\dx\rt)^{\frac{1}{2}} \\
&\le \sqrt2 \lt(\intt (\rho_1 + \rho_2) \,\dx\rt)^{\frac{1}{2}} \lt(\intt P(\rho_1|\rho_2) \,\dx\rt)^{\frac{1}{2}}
\end{split}
\]
due to Taylor's theorem:
\[
P(\rho_1|\rho_2) \ge \frac{1}{2}\min\lt\{\frac{1}{\rho_1}, \frac{1}{\rho_2}\rt\} (\rho_1-\rho_2)^2.
\]
This completes the proof.
\end{proof}

The above estimate is instrumental for connecting the modulated energy with the $L^p$-difference between densities. In our setting, when applied to specific pairs of densities, it immediately yields quantitative error estimates both for the density error and for the associated electric potential.

\begin{corollary}\label{cor:rho} Assume the hypotheses of Theorem \ref{thm:hdr}.  
\begin{enumerate}[(i)]
\item If $\kappa>0$, applying Lemma \ref{lem:rho} to the pair $(\rho^\tau,\rho)$, we obtain
\begin{align*}
    \|\rho^\tau - \rho\|_{L^1} \le \sqrt{\kappa} C [\calF(\tilde U^\tau|\tilde U)]^{\frac{1}{2}},
\end{align*}
where the uniform $L^1$-bounds follow from mass conservation.
\item For any $\kappa\ge 0$, applying Lemma \ref{lem:rho} to the pair
\[
(\rho_e^\tau,\rho_e)=(e^{\Phi[f^\tau]},e^\Phi)
\]
gives
\[
    \|e^{\Phi[f^\tau]} - e^{\Phi}\|_{L^1} \le C  [\calF(\tilde U^\tau|\tilde U)]^{\frac{1}{2}}.
\]
Here the neutrality condition (see, e.g., Proposition \ref{thm : existence and uniqueness}) guarantees that both $\{e^{\Phi[f^\tau]}\}$ and $\{e^\Phi\}$ are uniformly bounded in $L^\infty(0,T;L^1)$. Moreover,  one has $P(e^{\Phi[f^\tau]}|e^\Phi)\le \calF(\tilde U^\tau | \tilde U)$ (see \eqref{def: B_gamma kappa}). In particular, \eqref{eq: tauconv exp} follows.
\end{enumerate} 
\end{corollary}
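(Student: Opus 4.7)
The plan is to invoke Lemma \ref{lem:rho} for each pair of densities in turn, and then combine the resulting estimates with the modulated energy bound \eqref{eq: conv mod egy} of Theorem \ref{thm:hdr}. The corollary is essentially a repackaging of Lemma \ref{lem:rho}, so the main task is to verify that the two hypotheses required by that lemma (a uniform $L^1$-bound on the sum, and an $L^1$-bound on the relative entropy) both hold in a way that is controlled by $\calF(\tilde U^\tau|\tilde U)$.

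For part (i), I would apply Lemma \ref{lem:rho} with $(\rho_1,\rho_2)=(\rho^\tau,\rho)$. The first factor $\intt(\rho^\tau+\rho)\,\dx$ is controlled uniformly in $\tau$ by conservation of mass: the local balance laws \eqref{eq : local balance laws} (whose weak validity is ensured by \eqref{eq : KEI} and \eqref{sr:h}, as noted in Remark \ref{rmk:justify}) give $\|\rho^\tau(t)\|_{L^1}=\|\rho^\tau_0\|_{L^1}$, which is uniformly bounded under the well-prepared data assumption \textbf{(H1)} combined with \textbf{(H2)}; the corresponding bound on $\rho$ is immediate from the regularity in \eqref{eq:iEP:lss}. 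For the second factor, the decomposition \eqref{def: B_gamma kappa} yields $\kappa\intt P(\rho^\tau|\rho)\,\dx\le\calF(\tilde U^\tau|\tilde U)$, after which the main modulated-energy estimate \eqref{eq: conv mod egy} finishes the argument.

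For part (ii), I would apply Lemma \ref{lem:rho} to the pair $(\rho_e^\tau,\rho_e)=(e^{\Phi[f^\tau]},e^{\Phi})$. Here the uniform $L^1$ control on the sum follows from the neutrality identity in Proposition \ref{thm : existence and uniqueness}, which gives $\intt e^{\Phi[f^\tau]}\,\dx=\intt\rho^\tau\,\dx$ and similarly for the limit; both are uniformly bounded by mass conservation as in part (i). The relative-entropy factor $\intt P(e^{\Phi[f^\tau]}\,|\,e^{\Phi})\,\dx$ is, by \eqref{def: B_gamma kappa}, literally a non-negative summand of $\calF(\tilde U^\tau|\tilde U)$, so it is bounded by $\calF(\tilde U^\tau|\tilde U)$ with no $\kappa$-dependence. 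Plugging this into Lemma \ref{lem:rho} and invoking \eqref{eq: conv mod egy} gives the stated estimate, and \eqref{eq: tauconv exp} then follows at once.

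There is essentially no obstacle here: both statements are clean applications of Lemma \ref{lem:rho} once one recognizes that $P(\rho^\tau|\rho)$ and $P(e^{\Phi^\tau}|e^{\Phi})$ appear explicitly inside $\calF(\tilde U^\tau|\tilde U)$. The only subtle point worth flagging is the $\kappa$-dependence in part (i): the estimate degenerates as $\kappa\to 0$, which is structurally consistent with the fact that in the pressureless regime the modulated energy contains no $\rho$-component, and one must instead resort to the bounded Lipschitz distance via the extra assumption \textbf{(H3)} for the convergences in \eqref{eq: convergences kappa=0}.
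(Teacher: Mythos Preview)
Your proposal is correct and follows exactly the route the paper takes: the corollary is a direct application of Lemma \ref{lem:rho} to each pair, with the $L^1$-bounds coming from mass conservation (respectively neutrality), and the relative-entropy factor read off from the decomposition \eqref{def: B_gamma kappa} of $\calF(\tilde U^\tau|\tilde U)$. Your remark about the $\kappa$-dependence in part (i) is also on point; the constant indeed degenerates as $\kappa\to 0$, which is why the pressureless case requires the separate treatment via \textbf{(H3)}.
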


%
%
%
%
%
\subsection{Macroscopic and mesoscopic convergence: pressured case} \label{sec: conv mac bgk fp}

In this subsection, we establish detailed error estimates in the pressured case ($\kappa>0$). 

\subsubsection{Convergence of the macroscopic variables}

We first quantify the convergence of the macroscopic variables proving \eqref{eq: convergences, kappa>0}. Let us begin by recording some elementary observations, whose proofs can be found in \cite[Lemma 2.2]{carrillochoijung2021}.

\begin{lemma}\label{lem:rhou} Suppose that $\rho^\tau , \rho, \rho^\tau|u^\tau|^2\in L^1(\T^d)$ and $u\in L^\infty(\T^d)$. Then the following inequalities hold:
\begin{align*}
&\|\rho^\tau u^\tau -\rho u\|_{L^1} \le \| \rho^\tau\|_{L^1}^\frac{1}{2}\lt(\intt \rho^\tau|u^\tau-u|^2 \,\dx \rt)^\frac{1}{2} + \|u\|_{L^\infty}\|\rho^\tau -\rho\|_{L^1}, \\
&\|\rho^\tau u^\tau \otimes u^\tau - \rho u \otimes u\|_{L^1}  \le \intt \rho^\tau|u^\tau-u|^2 \,\dx +2\| \rho^\tau\|_{L^1}^\frac{1}{2}\lt(\intt \rho^\tau|u^\tau-u|^2 \,\dx \rt)^\frac{1}{2}\|u\|_{L^\infty} + \|\rho^\tau-\rho\|_{L^1}\|u\|_{L^\infty}^2.
\end{align*}
\end{lemma}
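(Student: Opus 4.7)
The plan is to prove both inequalities through a standard ``add and subtract'' decomposition combined with a weighted Cauchy--Schwarz estimate, treating the quantity $\sqrt{\rho^\tau}$ as the natural weight.

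For the first inequality, I would begin from the algebraic decomposition
\[
\rho^\tau u^\tau - \rho u = \rho^\tau(u^\tau - u) + (\rho^\tau - \rho)u,
\]
take the $L^1$-norm, and estimate the two pieces separately. The term $\int_{\T^d} |\rho^\tau-\rho||u|\,\dx$ is immediately bounded by $\|u\|_{L^\infty}\|\rho^\tau-\rho\|_{L^1}$. For the remaining term I would write $\rho^\tau|u^\tau - u| = (\rho^\tau)^{1/2}\cdot(\rho^\tau)^{1/2}|u^\tau - u|$ and apply Cauchy--Schwarz to obtain
\[
\intt \rho^\tau|u^\tau - u|\,\dx \le \|\rho^\tau\|_{L^1}^{1/2}\Bigl(\intt \rho^\tau|u^\tau - u|^2\,\dx\Bigr)^{1/2}.
\]

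For the second inequality I would use the tensor version of the same trick. Adding and subtracting $u$ in each slot of $u^\tau\otimes u^\tau$ produces the identity
\[
\rho^\tau u^\tau\otimes u^\tau - \rho u\otimes u = \rho^\tau(u^\tau-u)\otimes(u^\tau-u) + \rho^\tau(u^\tau-u)\otimes u + \rho^\tau u\otimes(u^\tau-u) + (\rho^\tau-\rho)u\otimes u.
\]
Taking $L^1$-norms, the first term directly yields $\int_{\T^d}\rho^\tau|u^\tau-u|^2\,\dx$, the last term is bounded by $\|u\|_{L^\infty}^2\|\rho^\tau-\rho\|_{L^1}$, and the two cross terms are each controlled by $\|u\|_{L^\infty}\int_{\T^d}\rho^\tau|u^\tau-u|\,\dx$, which after another application of Cauchy--Schwarz as above gives the factor $\|\rho^\tau\|_{L^1}^{1/2}\bigl(\int \rho^\tau|u^\tau-u|^2\,\dx\bigr)^{1/2}$ with an overall constant of $2$. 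Summing the four contributions yields the stated bound.

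Both estimates are of a routine algebraic character, so I do not expect any genuine obstacle. The only point that requires a small amount of care is ensuring that each quantity in the final inequality is interpreted correctly when $\rho^\tau$ may vanish on a set of positive measure, but this is automatic from the convention that the integrands $\rho^\tau|u^\tau-u|^2$ and $\rho^\tau|u^\tau-u|$ vanish on $\{\rho^\tau = 0\}$ (consistent with the definition of $u^\tau = u_{f^\tau}$ adopted earlier in the paper).
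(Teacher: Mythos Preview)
Your proposal is correct and is precisely the standard argument for this lemma. The paper does not actually supply its own proof here; it merely records the inequalities as ``elementary observations'' and cites \cite[Lemma 2.2]{carrillochoijung2021}, whose proof is exactly the add-and-subtract plus weighted Cauchy--Schwarz computation you outlined.
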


We now proceed with the detailed proof of the convergence estimates stated in \eqref{eq: convergences, kappa>0}.

\begin{proof}[Proof of \eqref{eq: convergences, kappa>0}]

We recall from Lemma \ref{lem:eps} that
\[
\calF(\tilde U^\tau|\tilde U) \le C\sqrt{\tau}, \quad \forall t\in[0,T].
\]
The first assertion of \eqref{eq: convergences, kappa>0}, 
\[
\|\rho^\tau - \rho\|_{L^\infty(0,T;L^1)} \le C\tau^{1/4}
\]
was already proven in Corollary \ref{cor:rho}. Combining this with Lemma \ref{lem:rhou} yields
\[
\|\rho^\tau u^\tau - \rho u\|_{L^\infty(0,T;L^1)} \le C\tau^{1/4}
\]
and
\[
\|\rho^\tau u^\tau\otimes u^\tau - \rho u\otimes u\|_{L^\infty(0,T;L^1)} \le C\tau^{1/4}.
\]
Then we estimate
\[
\begin{split}
&\int_0^t \intt \lt|\int_{\R^d} v\otimes v f^\tau \dv -\lt(\rho u \otimes u + \kappa \rho \bbI \rt) \rt| \dx\ds \\
&\quad \le  \int_0^t \intt \lt|\int_{\R^d} v\otimes v f^\tau \dv -\lt(\rho^\tau u^\tau \otimes u^\tau + \kappa\rho^\tau \bbI \rt) \rt| \dx\ds + \|\rho^\tau u^\tau\otimes u^\tau - \rho u\otimes u\|_{L^1(0;T;L^1)} \\
&\quad = \int_0^t \int_{\T^d} \left|\int_{\R^d} v\otimes \left\{ (u^\tau - v)f^\tau - \kappa \nabla_v f^\tau \right\}\,\dv \right| \,\dx\ds + \|\rho^\tau u^\tau\otimes u^\tau - \rho u\otimes u\|_{L^1(0;T;L^1)}.
\end{split}
\]
For the first term on the right-hand side, we proceed as in \eqref{eq:diss} by using the kinetic entropy inequality. The second term is bounded by $C\tau^{1/4}$. Collecting the estimates, we deduce
\[
\lt\|\int_{\R^d} v\otimes v f^\tau \dv -\lt(\rho u \otimes u + \kappa\rho \bbI \rt)\rt\|_{L^1(0,T;L^1)} \le C\tau^{1/4}.
\]

\end{proof}
%
%
%
%
%
\subsubsection{Convergence of the mesoscopic variables}
In this part, we prove that the control of the relative entropy implies the convergence of the distribution functions: 
\[
f^\tau \to M^{(\rho,u)}.
\]

%
%
%
%
%

Our approach relies on a classical logarithmic Sobolev inequality.

\begin{lemma}[Logarithmic Sobolev inequality]
Let $\mu$ denote the standard Gaussian measure on $\R^d$. Then
\[
\int_{\R^d} g(v)\log g(v)\, \dmu \le \frac{1}{2} \int_{\R^d} \frac{|\nabla_v g(v)|^2}{g(v)} \,\dmu + 2 \lt( \int_{\R^d} g(v) \, \dmu \rt) \log \lt( \int_{\R^d} g(v) \, \dmu \rt).
\]
\end{lemma}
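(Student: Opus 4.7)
The plan is to derive the stated bound from the classical Gross logarithmic Sobolev inequality for the standard Gaussian measure $\mu$, which in its Fisher-information form reads
\[
\mathrm{Ent}_\mu(g) := \int g \log g \, d\mu \;-\; \Big(\int g \, d\mu\Big) \log \Big(\int g \, d\mu\Big) \;\le\; \frac{1}{2} \int \frac{|\nabla g|^2}{g}\,d\mu
\]
for any smooth nonnegative $g$; equivalently, via the substitution $g = h^2$, this is Gross's original statement $\mathrm{Ent}_\mu(h^2) \le 2 \int |\nabla h|^2 \, d\mu$.

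First, I would recall a proof of Gross's inequality. The cleanest route is the Bakry-Emery $\Gamma_2$-calculus, using that the Ornstein-Uhlenbeck generator $\mathcal{L}\phi = \Delta \phi - v\cdot \nabla \phi$ satisfies the curvature-dimension condition $CD(1,\infty)$: a direct computation gives $\Gamma_2(f,f) = \|\mathrm{Hess}\, f\|_{\mathrm{HS}}^2 + |\nabla f|^2 \ge |\nabla f|^2$, and the Bakry-Emery criterion then yields the log Sobolev inequality with sharp constant $\tfrac12$. (An alternative would be Gross's original tensorization of the two-point Bernoulli log Sobolev inequality followed by passage to the Gaussian limit via the CLT.) For a general nonnegative $g$ that may vanish or lack smoothness, a standard regularization---replacing $g$ with $(g \vee \delta)$ mollified in $v$, applying the smooth inequality, and sending the truncation parameters to their limits using lower semicontinuity of the Fisher information and Fatou's lemma---extends the inequality.

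Next, setting $Z := \int g \, d\mu$ (with $Z > 0$; the case $Z = 0$ is trivial), the definition of $\mathrm{Ent}_\mu$ combined with Gross's inequality gives
\[
\int g \log g \, d\mu \;=\; \mathrm{Ent}_\mu(g) + Z \log Z \;\le\; \frac{1}{2} \int \frac{|\nabla g|^2}{g}\,d\mu + Z \log Z.
\]
This already yields the stated inequality with the sharp coefficient $1$ on the $Z \log Z$ term; the looser coefficient $2$ written in the lemma is immediate from the sharper form whenever $Z \ge 1$, and in the intended application to \eqref{eq: conv of dist g1 k>0} the total mass $Z$ corresponds (after the natural substitution $g = f^\tau/M$ with $M$ the appropriate Maxwellian) to a macroscopic density quantity uniformly bounded below and above via the mass-conservation and modulated-energy estimates of Section \ref{sec : hydrodynamic limit}, so the mismatch is harmless.

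The main technical point I anticipate is only the extension of the smooth inequality to the general $g$ that appears in the application, with possibly vanishing values or a priori unbounded Fisher information; this is resolved by the truncation-mollification scheme indicated above and presents no fundamental difficulty.
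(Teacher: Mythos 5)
The paper gives no proof of this lemma at all --- it is invoked as a classical fact --- so there is no argument of the authors' to compare against; your derivation from Gross's inequality (in the Fisher-information form, obtained e.g.\ via the Bakry--\'Emery $CD(1,\infty)$ criterion for the Ornstein--Uhlenbeck generator) is the standard and correct route, and the reduction $\int g\log g\,\dmu = \mathrm{Ent}_\mu(g) + Z\log Z$ with $Z=\int g\,\dmu$ is exactly right. Your observation about the constant deserves emphasis: the classical inequality carries coefficient $1$ on $Z\log Z$, and the coefficient $2$ printed in the lemma actually makes the statement \emph{false} for general $g$ --- take $g\equiv c$ with $0<c<1$, for which the claimed bound reads $c\log c\le 2c\log c$, i.e.\ $c\ge 1$. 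So your hedge ``immediate whenever $Z\ge 1$'' is not merely a convenience but a necessity. In the paper's only application (Lemma \ref{lem: KL div}) one substitutes $g=f/M^{(\rho_f,u)}$ against the Gaussian $G(v;u,\kappa)\,\dv$, so that $Z=\int (f/M^{(\rho_f,u)})G\,\dv = 1$ exactly and the $Z\log Z$ term vanishes identically; your remark that $Z$ is ``a macroscopic density quantity bounded below and above'' is slightly off the mark but the conclusion --- that the mismatch is harmless --- is correct. The regularization step you flag (truncation and mollification to handle $g$ vanishing or having infinite Fisher information) is the standard fix and poses no difficulty. In short: your proposal is correct, proves the sharp (coefficient-$1$) version, and correctly diagnoses that the version as stated holds only under $Z\ge1$, which is all that the paper uses.
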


This inequality allows us to derive estimates for the relative Boltzmann entropy between a function $f$ and the local Maxwellian. In particular, we have the following result.
\begin{lemma} \label{lem: KL div}
For $f \in C^\infty_c(\R^d)$ satisfying $f(v) \ge 0$ for all $v \in \R^d$ and  
\[
\rho_f := \int f(v)\,\dv >0,
\]
set, for $\kappa>0$ and $u\in \R^d$:
\[
M^{(\rho_f,u)}(v) = \frac{\rho_f}{(2\pi \kappa)^{\frac{d}{2}}} e^{-\frac{|v-u|^2}{2\kappa}}.
\]
Then the following inequality holds:
\[
\kappa \intr f(v)\log \frac{f(v)}{M^{(\rho_f,u)}(v)}\, \dv \le \,\,\frac{1}{2} \intr \frac{|\kappa \nabla_v f(v) - (u-v) f(v) |^2}{f(v)} \, \dv.
\]
\end{lemma}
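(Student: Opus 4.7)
The plan is to reduce the claimed inequality to the logarithmic Sobolev inequality (LSI) stated in the excerpt via an affine change of variables that centers at $u$ and rescales by $\sqrt{\kappa}$, together with a renormalization to unit mass that eliminates the log correction term.

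I would first write $M^{(\rho_f,u)} = \rho_f G$, where $G(v) := (2\pi\kappa)^{-d/2}e^{-|v-u|^2/(2\kappa)}$, and set $g := f/G$. The substitution $w := (v-u)/\sqrt{\kappa}$ turns $G\,dv$ into the standard Gaussian measure $d\mu(w)$ on $\mathbb{R}^d$ and produces a pushforward $\tilde g(w) := \kappa^{d/2} g(u+\sqrt{\kappa}w)$ satisfying $\int \tilde g\, d\mu = \int f\, dv = \rho_f$. Two direct computations in the new variables then yield
\[
\int f \log\frac{f}{M^{(\rho_f,u)}}\, dv = \int \tilde g \log\frac{\tilde g}{\rho_f}\, d\mu
\]
(by cancellation of the $\rho_f\log\rho_f$ arising from $\log\tilde G$ versus $\log M^{(\rho_f,u)}$), and
\[
\int \frac{|\nabla_w \tilde g|^2}{\tilde g}\, d\mu = \frac{1}{\kappa}\int \frac{|\kappa\nabla_v f - (u-v)f|^2}{f}\, dv,
\]
the latter by expanding $\nabla_v g$ using $\nabla_v G/G = -(v-u)/\kappa$ and by carefully accounting for the Jacobian $\kappa^{d/2}$ and the chain-rule factor $\sqrt{\kappa}$.

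The final step is to apply the LSI stated in the excerpt to the normalized density $\tilde g/\rho_f$, which has unit integral with respect to $\mu$; this makes the $\bigl(\int \cdot\, d\mu\bigr)\log\bigl(\int \cdot\, d\mu\bigr)$ correction term vanish identically, so the LSI reduces to
\[
\int \frac{\tilde g}{\rho_f} \log\frac{\tilde g}{\rho_f}\, d\mu \le \frac{1}{2\rho_f}\int \frac{|\nabla_w \tilde g|^2}{\tilde g}\, d\mu.
\]
Multiplying by $\rho_f$ and combining with the two identities above gives
\[
\int f\log\frac{f}{M^{(\rho_f,u)}}\, dv \le \frac{1}{2\kappa}\int \frac{|\kappa\nabla_v f - (u-v)f|^2}{f}\, dv,
\]
and multiplication by $\kappa$ produces the stated inequality. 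The main point requiring care is the bookkeeping of constants under the affine rescaling $v = u+\sqrt{\kappa}w$, which is precisely what produces the scale-correct factor $1/\kappa$ on the dissipation; the reduction to a unit-mass density is the clean way to absorb the inhomogeneity $\rho_f\neq 1$ and avoid having to separately handle the $\rho_f\log\rho_f$ boundary term.
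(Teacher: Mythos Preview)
Your approach is essentially the same as the paper's: both reduce the inequality to the Gaussian logarithmic Sobolev inequality via the affine change of variables $v\mapsto (v-u)/\sqrt{\kappa}$, and both arrange for the mass-correction term $(\int g\,d\mu)\log(\int g\,d\mu)$ to vanish. The paper applies the LSI first and then substitutes $f\to f/M^{(\rho_f,u)}$ so that the integral becomes $1$, whereas you normalize $\tilde g/\rho_f$ to unit mass before applying the LSI; these are trivially equivalent reorderings of the same argument.

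One minor bookkeeping slip: your definition $\tilde g(w):=\kappa^{d/2}g(u+\sqrt{\kappa}w)$ carries an extra factor of $\kappa^{d/2}$. Since the Jacobian $dv=\kappa^{d/2}\,dw$ is already absorbed in the identity $G(v)\,dv=d\mu(w)$, one has directly $\int g(u+\sqrt{\kappa}w)\,d\mu(w)=\int g(v)G(v)\,dv=\rho_f$ without any additional factor. With $\tilde g(w):=g(u+\sqrt{\kappa}w)$ your two identities and the final conclusion go through exactly as you describe.
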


\begin{proof}
For a smooth and compactly supported $f(v)$, we define $g$ with $f(v)=g(\frac{v-u}{\sqrt{\kappa}})$, and
\[
G(v;u,\kappa):=\frac{1}{(2\pi \kappa)^{\frac{d}{2}}} e^{-\frac{|v-u|^2}{2\kappa}}.
\]
Applying the Logarithmic Sobolev inequality to $g(\cdot)$ with a change of variable $v \mapsto \frac{v-u}{\sqrt{\kappa}}$, we obtain
\[
\begin{split}
&\intr f(v)\log f(v) G(v;u,\kappa)\,\dv \cr
&\quad \le \frac{\kappa}{2} \intr \frac{|\nabla_v f(v)|^2}{f(v)}G(v;u,\kappa)\,\dv   +2 \lt( \intr f(v) G(v;u,\kappa)\dv \rt) \log \lt( \intr f(v) G(v;u,\kappa)\,\dv \rt).
\end{split}
\]
By substituting $f(v)$ with $f(v)/M^{(\rho_f,u)}(v)$, the last term on the right-hand side vanishes. Then multiplying $\rho_f$ on both sides, we arrive at the result.
\end{proof}

We now estimate the relative Boltzmann entropy between $f^\tau$ and the local Maxwellian $M^{(\rho,u)}$.

\begin{lemma}
Let $f^\tau(v) \ge 0$ and $(\rho,u)\in \R_+\times \R^d$ be given, and denote $(\rho^\tau, \rho^\tau u^\tau) = (\rho_{f^{\tau}}, \rho_{f^{\tau}} u_{f^{\tau}})$. Then the following inequality holds:
\bq \label{eq: fokk rel tau}
\begin{split}
&\kappa \iinttr f^\tau \log \frac{f^\tau}{M^{(\rho,u)}} \dx\dv \cr
&\quad \le \frac{1}{2} \iinttr \frac{|\kappa \nabla_v f^\tau(v) - (u^\tau-v) f^\tau(v) |^2}{f^\tau(v)} \,\dx\dv  +\kappa \intt \rho^\tau \log \frac{\rho^{\tau}}{\rho} \,\dx + \frac{1}{2}\intt  \rho^\tau |u^\tau - u|^2 \, \dx.
\end{split}
\eq
\end{lemma}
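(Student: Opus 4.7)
The strategy is to reduce the inequality to a pointwise (in $x$) application of Lemma \ref{lem: KL div} by inserting the ``intermediate'' local Maxwellian $M^{(\rho^\tau, u^\tau)}$ built from the true macroscopic moments of $f^\tau$. Concretely, I would write
\[
\kappa f^\tau \log \frac{f^\tau}{M^{(\rho,u)}} = \kappa f^\tau \log \frac{f^\tau}{M^{(\rho^\tau,u^\tau)}} + \kappa f^\tau \log \frac{M^{(\rho^\tau,u^\tau)}}{M^{(\rho,u)}},
\]
and integrate each term separately in $v$ and then in $x$.

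For the first term, apply Lemma \ref{lem: KL div} pointwise in $x$ with $u$ replaced by $u^\tau(x)$ and $\rho_f$ replaced by $\rho^\tau(x)$; after integrating in $x$ this produces exactly the dissipation term on the right-hand side of \eqref{eq: fokk rel tau}. (In the degenerate case $\rho^\tau(x)=0$ one has $f^\tau(x,\cdot)\equiv 0$, so both sides vanish at such $x$ and the integration is unaffected.)

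For the second term, use the explicit formula for the Maxwellian to compute
\[
\log \frac{M^{(\rho^\tau,u^\tau)}}{M^{(\rho,u)}} = \log\frac{\rho^\tau}{\rho} + \frac{|v-u|^2 - |v-u^\tau|^2}{2\kappa}.
\]
Integrating against $f^\tau$ in $v$, the first piece gives $\rho^\tau \log(\rho^\tau/\rho)$. For the second piece, expand $|v-u|^2-|v-u^\tau|^2 = 2v\cdot(u^\tau-u) + |u|^2 - |u^\tau|^2$ and use the definitions $\int f^\tau \dv = \rho^\tau$ and $\int v f^\tau \dv = \rho^\tau u^\tau$ to obtain
\[
\frac{1}{2\kappa}\int_{\R^d} f^\tau\bigl(|v-u|^2 - |v-u^\tau|^2\bigr)\dv = \frac{1}{2\kappa}\rho^\tau|u^\tau-u|^2.
\]
Multiplying by $\kappa$ and integrating in $x$ yields the remaining two terms on the right-hand side of \eqref{eq: fokk rel tau}.

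There is no real obstacle here beyond bookkeeping: the essential analytic content lies already in Lemma \ref{lem: KL div} (i.e.\ the logarithmic Sobolev inequality), and the present statement is an algebraic ``change of base'' from the local Maxwellian $M^{(\rho^\tau,u^\tau)}$ (around which the log-Sobolev inequality is sharp) to the target Maxwellian $M^{(\rho,u)}$, with the price being precisely the relative entropy of the densities $\rho^\tau\log(\rho^\tau/\rho)$ and the modulated kinetic energy $\tfrac12 \rho^\tau|u^\tau-u|^2$. The only mild care needed is to justify the pointwise application of Lemma \ref{lem: KL div} despite it being stated for $f\in C_c^\infty$ and $\rho_f>0$; this is handled by a standard truncation/mollification of $f^\tau(x,\cdot)$ in $v$ together with the observation that both sides of \eqref{eq: fokk rel tau} are continuous under such approximations (the dissipation term $|\kappa\nabla_v f^\tau - (u^\tau-v)f^\tau|^2/f^\tau$ is interpreted as $+\infty$ wherever $f^\tau=0$ and $\kappa\nabla_v f^\tau-(u^\tau-v)f^\tau\neq 0$, and as $0$ otherwise, in the convention of Theorem \ref{thm:gws}), and on the set $\{\rho^\tau=0\}$ all quantities vanish.
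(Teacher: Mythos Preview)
Your proposal is correct and follows essentially the same approach as the paper: decompose the relative entropy by inserting the intermediate Maxwellian $M^{(\rho^\tau,u^\tau)}$, apply Lemma \ref{lem: KL div} to the first piece, and compute the log-ratio of Maxwellians explicitly for the remainder. The paper does the decomposition in three steps (passing through $M^{(\rho,u^\tau)}$ as well) rather than two, but this is a purely cosmetic difference in the algebra; your additional remarks on approximation and the degenerate set $\{\rho^\tau=0\}$ are not in the paper but are harmless clarifications.
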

\begin{proof}
We observe that
\[
\begin{split}
f^\tau \log \frac{f^\tau}{M^{(\rho, u)}} &= f^\tau \log \frac{f^\tau}{M^{(\rho^\tau,u^\tau)}} + f^\tau \log \frac{M^{(\rho^\tau,u^\tau)}}{M^{(\rho,u^\tau)}} + f^\tau \log \frac{M^{(\rho,u^\tau)}}{M^{(\rho, u)}} \\
&= f^\tau \log \frac{f^\tau}{M^{(\rho^\tau,u^\tau)}} + f^\tau \log \frac{\rho^{\tau}}{\rho} - f^\tau\lt(\frac{|u^\tau|^2 - |u|^2 - 2(u^\tau-u)\cdot v}{2\kappa}\rt).
\end{split}
\]
Thus, it follows that
\[
\begin{split}
\intr f^\tau \log \frac{f^\tau}{M^{(\rho, u)}} \dv &=  \intr f^\tau \log \frac{f^\tau}{M^{(\rho^\tau,u^\tau)}} \dv + \rho^\tau \log \frac{\rho^{\tau}}{\rho} - \lt(\frac{\rho^\tau|u^\tau|^2 - \rho^\tau|u|^2 - 2(u^\tau-u)\cdot \rho^\tau u^\tau}{2\kappa}\rt) \\
&= \intr f^\tau \log \frac{f^\tau}{M^{(\rho^\tau,u^\tau)}} \dv + \rho^\tau \log \frac{\rho^{\tau}}{\rho} + \lt(\frac{\rho^\tau|u^\tau|^2 + \rho^\tau|u|^2 - 2u\cdot \rho^\tau u^\tau}{2\kappa}\rt) \\
&= \intr f^\tau \log \frac{f^\tau}{M^{(\rho^\tau,u^\tau)}} \dv + \rho^\tau \log \frac{\rho^{\tau}}{\rho} + \lt(\frac{\rho^\tau |u^\tau - u|^2}{2\kappa}\rt).
\end{split}
\]
By estimating the remaining integral on the right-hand side with Lemma \ref{lem: KL div}, we obtain \eqref{eq: fokk rel tau}.
\end{proof}

We now conclude with the convergence result.

\begin{proof}[Proof of \eqref{eq: conv of dist g1 k>0}]
In view of \eqref{eq: conv mod egy}, \eqref{eq: convergences, kappa>0}, and the kinetic entropy inequality \eqref{eq : KEI}, we note that the right-hand side of \eqref{eq: fokk rel tau} is of order $\tau^{1/2}$. The convergence asserted in \eqref{eq: conv of dist g1 k>0} then follows from the Csisz\'ar--Kullback inequality.
\end{proof}

%
%
%
%
%
\subsection{Macroscopic and mesoscopic convergence: pressureless case}
In this part, we provide the bound estimates for the pressureless case ($\kappa = 0$). To streamline our proof, we first recall several results from \cite{carrillochoijung2021, choi2021, FK19}.

\begin{lemma} \label{lem: choi2021}
    \cite[Proposition 3.1]{choi2021} Let $\tilde\rho\in C([0,T];\calP(\T^d))$ be a solution to the continuity equation
    \begin{align*}
        \p_t \tilde\rho + \nabla_x\cdot (\tilde\rho \tilde u) = 0,
    \end{align*}
    where $\tilde u$ is a Borel vector field with
    \begin{align*}
        \intt \tilde\rho |\tilde u|^2 \,\dx < \infty.
    \end{align*}
    Assume that $(\rho, u)$ is another solution to the continuity equation with $\nabla u\in L^\infty((0,T)\times\T^d)$. Then there exists $C>0$, dependent only on $T$ such that
    \begin{align*}
        W_1(\tilde\rho, \rho)(t) \le Ce^{C\|\nabla u\|_{L^\infty}} \left(W_1(\tilde\rho_0,\rho_0) +\left(\int_0^t \intt \tilde\rho |\tilde u - u|^2\,\dx\ds \right)^{\frac{1}{2}} \right).
    \end{align*}
    Here $W_1$ denotes the first-order Wasserstein distance.
\end{lemma}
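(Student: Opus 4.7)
The plan is to prove the stated Wasserstein stability estimate by introducing an intermediate measure built from the flow of the regular vector field $u$, and then applying the triangle inequality. Since $\nabla u \in L^\infty$, the ODE $\dot{\psi}_{t} = u(t,\psi_t)$ generates a well-defined flow $\psi_{t,s}\colon\T^d\to\T^d$ satisfying the Lipschitz bound $\|\nabla \psi_{t,s}\|_{L^\infty} \le e^{(t-s)\|\nabla u\|_{L^\infty}}$. Define the auxiliary measure $\sigma_t := (\psi_{t,0})_\# \tilde\rho_0$, which solves the continuity equation with velocity $u$ and initial datum $\tilde\rho_0$. Write
\[
W_1(\tilde\rho_t, \rho_t) \le W_1(\tilde\rho_t, \sigma_t) + W_1(\sigma_t, \rho_t),
\]
and estimate each term separately.

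The second term is the classical Wasserstein-stability-with-same-flow estimate: since $\sigma_t = (\psi_{t,0})_\#\tilde\rho_0$ and $\rho_t = (\psi_{t,0})_\#\rho_0$, a pushforward of any optimal coupling between $\tilde\rho_0$ and $\rho_0$ gives
\[
W_1(\sigma_t, \rho_t) \le \|\nabla \psi_{t,0}\|_{L^\infty}\, W_1(\tilde\rho_0, \rho_0) \le e^{t\|\nabla u\|_{L^\infty}}\, W_1(\tilde\rho_0, \rho_0).
\]
For the first term, represent $\tilde\rho_t$ along the characteristics of $\tilde u$: if $T_{t,0}$ denotes the corresponding (a.e.) flow, then $\tilde\rho_t = (T_{t,0})_\#\tilde\rho_0$, and the coupling $(T_{t,0},\psi_{t,0})_\#\tilde\rho_0$ yields $W_1(\tilde\rho_t, \sigma_t) \le \int |T_{t,0}(x_0) - \psi_{t,0}(x_0)|\,\nd\tilde\rho_0(x_0)$. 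Adding and subtracting $u(s, T_{s,0}(x_0))$ inside the ODE difference gives
\[
|T_{t,0}(x_0) - \psi_{t,0}(x_0)| \le \int_0^t |\tilde u - u|(s, T_{s,0}(x_0))\,\ds + \|\nabla u\|_{L^\infty}\int_0^t |T_{s,0}(x_0) - \psi_{s,0}(x_0)|\,\ds,
\]
so Gr\"onwall's inequality produces $|T_{t,0}(x_0) - \psi_{t,0}(x_0)| \le e^{t\|\nabla u\|_{L^\infty}}\int_0^t |\tilde u - u|(s, T_{s,0}(x_0))\,\ds$. Integrating against $\tilde\rho_0$, using the change of variables $\tilde\rho_s = (T_{s,0})_\#\tilde\rho_0$, and applying Cauchy--Schwarz gives
\[
W_1(\tilde\rho_t, \sigma_t) \le e^{t\|\nabla u\|_{L^\infty}}\sqrt{t}\left(\int_0^t\intt \tilde\rho\,|\tilde u - u|^2\,\dx\ds\right)^{1/2}.
\]
Combining the two estimates yields the claimed bound.

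The principal technical obstacle is justifying the flow representation $\tilde\rho_t = (T_{t,0})_\#\tilde\rho_0$, since the vector field $\tilde u$ need not have better regularity than $\tilde u \in L^2(\textnormal{d}\tilde\rho\,\dt)$. This is overcome using Ambrosio's superposition principle, which furnishes a probability measure $\eta$ on $C([0,T];\T^d)$ concentrated on curves solving $\dot\gamma = \tilde u(t,\gamma)$ and satisfying $(e_t)_\#\eta = \tilde\rho_t$. The pointwise Gr\"onwall estimate above then holds along $\eta$-a.e.\ characteristic curve $\gamma$ coupled against the deterministic $u$-flow starting at $\gamma(0)$, and integration in $\eta$ recovers the same bound. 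The exponential factor absorbs the dependence on $\|\nabla u\|_{L^\infty}$, and the $\sqrt{t}$ factor is harmlessly absorbed into the constant $C$ depending on $T$.
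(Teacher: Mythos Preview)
The paper does not supply its own proof of this lemma; it is simply quoted from \cite[Proposition 3.1]{choi2021} as a known tool. Your argument via the superposition principle, the intermediate measure $\sigma_t=(\psi_{t,0})_\#\tilde\rho_0$, and the pointwise Gr\"onwall estimate along characteristic curves is the standard and correct route to this type of Wasserstein stability bound, and is essentially how the result is established in the cited reference.
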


\begin{lemma} \label{lem: carrillochoijung}
    \cite[Lemma 2.2]{carrillochoijung2021} There exists a constant $C>0$ dependent only on $\|u\|_{L^\infty(0,T;W^{1,\infty}(\T^d))}$ such that
    \begin{align*}
        &d_{\rm BL}(\rho^\tau u^\tau, \rho u) \le \|\rho^\tau\|_{L^1}^{\frac{1}{2}} \left(\intt \rho^\tau |u^\tau - u|^2\,\dx\right)^{\frac{1}{2}} + C d_{\rm BL}(\rho^\tau, \rho), \\
        &d_{\rm BL}(\rho^\tau u^\tau \otimes u^\tau, \rho u\otimes u) \le \intt \rho^\tau |u^\tau - u|^2\,\dx + C\|\rho^\tau\|_{L^1}^{\frac{1}{2}}\left(\intt \rho^\tau |u^\tau - u|^2\,\dx\right)^{\frac{1}{2}} + Cd_{\rm BL}(\rho^\tau,\rho),\\
        &d_{\rm BL}(f^\tau, \rho \otimes \delta(\cdot - u)) \le \|\rho^\tau\|_{L^1}^{\frac{1}{2}}\left( \left(\iinttr |v-u^\tau|^2 f^\tau\,\dx\dv \right)^{\frac{1}{2}} + \left(\intt \rho^\tau|u^\tau - u|^2\,\dx\right)^{\frac{1}{2}} \right) + Cd_{\rm BL}(\rho^\tau,\rho).
    \end{align*}
\end{lemma}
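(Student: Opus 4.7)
The plan is to work from the dual characterization of the bounded Lipschitz distance, namely $d_{\rm BL}(\mu,\nu) = \sup\{\int\phi\,\nd(\mu - \nu) : \|\phi\|_\infty + {\rm Lip}(\phi) \le 1\}$, and treat each of the three inequalities by pairing with a test function $\phi$ (vector- or tensor-valued, as appropriate) of unit bounded Lipschitz norm. For each difference, a telescoping decomposition separates a $\rho^\tau$-weighted velocity error from a $(\rho^\tau - \rho)$ piece; the first is controlled by a Cauchy--Schwarz inequality with respect to the density weight, and the second by unpacking the definition of $d_{\rm BL}(\rho^\tau,\rho)$ itself.

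For the first bound, I would write $\rho^\tau u^\tau - \rho u = \rho^\tau(u^\tau - u) + (\rho^\tau - \rho)u$. Testing against $\phi$ with $\|\phi\|_{\rm BL}\le 1$, the first summand is majorized by Cauchy--Schwarz as $\|\phi\|_\infty \|\rho^\tau\|_{L^1}^{1/2}\bigl(\intt \rho^\tau|u^\tau - u|^2\,\dx\bigr)^{1/2}$. The second summand reads $\intt (u\phi)\cdot \nd(\rho^\tau - \rho)$; since $\|u\phi\|_{\rm BL}\lesssim \|u\|_{W^{1,\infty}}\|\phi\|_{\rm BL}$ by the Leibniz rule, it contributes $C\|u\|_{W^{1,\infty}} d_{\rm BL}(\rho^\tau, \rho)$. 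For the second bound, I would telescope
\[
\rho^\tau u^\tau\otimes u^\tau - \rho u\otimes u = \rho^\tau (u^\tau - u)\otimes(u^\tau - u) + \rho^\tau u\otimes(u^\tau - u) + \rho^\tau(u^\tau - u)\otimes u + (\rho^\tau - \rho) u\otimes u,
\]
and handle each piece analogously, producing the quadratic, mixed, and density-distance contributions stated.

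For the third (kinetic) bound, the telescoping becomes
\[
\int \phi\,\nd(f^\tau - \rho\,\delta_u) = \iinttr (\phi(x,v) - \phi(x,u^\tau(x)))f^\tau\,\dx\dv + \intt \rho^\tau(\phi(x,u^\tau) - \phi(x,u))\,\dx + \intt \phi(x,u(x))\,\nd(\rho^\tau - \rho),
\]
where $\delta_u := \delta(\cdot - u(x))$. The first term is majorized by ${\rm Lip}_v(\phi)\iinttr |v - u^\tau|f^\tau\,\dx\dv \le \|\phi\|_{\rm BL}\|\rho^\tau\|_{L^1}^{1/2}\bigl(\iinttr |v - u^\tau|^2 f^\tau\,\dx\dv\bigr)^{1/2}$ by Cauchy--Schwarz; the second follows in the same fashion with $v$ replaced by $u^\tau$ and $f^\tau$ by $\rho^\tau$; and the third is bounded by $C\|u\|_{W^{1,\infty}}d_{\rm BL}(\rho^\tau,\rho)$ since $x\mapsto\phi(x,u(x))$ is bounded Lipschitz with norm $\lesssim\|u\|_{W^{1,\infty}}$ by the chain rule.

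The main obstacle is purely technical and amounts to verifying the two elementary composition estimates
\[
\|u\phi\|_{\rm BL}\le C\|u\|_{W^{1,\infty}}\|\phi\|_{\rm BL},\qquad \|\phi(\cdot,u(\cdot))\|_{\rm BL}\le C\|u\|_{W^{1,\infty}}\|\phi\|_{\rm BL},
\]
which together ensure that the final constant depends only on $\|u\|_{L^\infty(0,T;W^{1,\infty}(\T^d))}$, as claimed. Both follow from the Leibniz and chain rules for Lipschitz functions, but the product-rule argument must treat the Lipschitz and sup-norm pieces of $\|\cdot\|_{\rm BL}$ separately, which makes the constant bookkeeping slightly fiddly even though each individual estimate is elementary.
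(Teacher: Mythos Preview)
The paper does not prove this lemma itself; it is quoted verbatim from \cite[Lemma 2.2]{carrillochoijung2021} and invoked as a black box. Your argument is correct and is precisely the standard proof one would find in the cited source: dual formulation of $d_{\rm BL}$, telescoping decomposition, Cauchy--Schwarz on the $\rho^\tau$-weighted velocity errors, and the Leibniz/chain-rule bounds $\|u\phi\|_{\rm BL}\lesssim\|u\|_{W^{1,\infty}}\|\phi\|_{\rm BL}$ and $\|\phi(\cdot,u(\cdot))\|_{\rm BL}\lesssim(1+\|u\|_{W^{1,\infty}})\|\phi\|_{\rm BL}$ to extract the $d_{\rm BL}(\rho^\tau,\rho)$ term.
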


\begin{proof}[Proof of \eqref{eq: convergences kappa=0}]
    We apply Lemma \ref{lem: choi2021} with $(\tilde\rho,\tilde\rho\tilde u) = (\rho^\tau, \rho^\tau u^\tau)$ and take to be $(\rho, u)$ the strong solution to the Euler equations \eqref{eq : ionic EP}. Using the equivalence between the Wasserstein-1 distance and the bounded Lipschitz distance (which holds in bounded domains), we deduce
    \begin{align}\label{eq: dbl rho}
        d_{\rm BL}(\rho^\tau, \rho)(t) \le C_T \left(d_{\rm BL}(\rho^\tau_0,\rho_0) + \sup_{0\le t \le T}[\calF(U^\tau|U)(t)]^{\frac{1}{2}} \right) \le C_T \tau^{1/4}.
    \end{align}
    Consequently, Lemma \ref{lem: carrillochoijung} immediately provides
    \begin{align*}
        d_{\rm BL}(\rho^\tau u^\tau, \rho u) \le C\tau^{1/4}, \quad d_{\rm BL}(\rho^\tau u^\tau \otimes u^\tau, \rho u\otimes u) \le C \tau^{1/4}.
    \end{align*}
    Finally, using \eqref{eq : KEI}, \eqref{eq: conv mod egy}, Lemma \ref{lem: carrillochoijung} and \eqref{eq: dbl rho}, we obtain
    \begin{align*}
        &\int_0^T d_{\rm BL}^2\Big(f^\tau(t) \;,\; \rho(t)\otimes\delta(\cdot-u(t))\Big) \,\dt \\
        &\quad \le C \int_0^T \iinttr |v-u^\tau|^2 f^\tau\,\dx\dv\dt + C\int_0^T \intt \rho^\tau |u^\tau-u|^2\,\dx\dt + C \int_0^T d_{\rm BL}^2(\rho^\tau,\rho) \,\dt \\
        &\quad \le C_T \, \tau^{1/2}.
    \end{align*}
    This completes the proof.
\end{proof}

%
%
%
%
%
%
%
%
%

\bibliographystyle{abbrv}

\end{document}